\tikzset{
solid node/.style={circle,draw,inner sep = 1.5,fill=black},
hollow node/.style={circle,draw,inner sep=1.5}
}
\providecommand{\MR}{\relax\ifhmode\unskip\space\fi MR }
\providecommand{\href}[2]{#2}
\theoremstyle{plain}
\newtheorem{thm}{Theorem}[section]
\newtheorem{lem}[thm]{Lemma}
\newtheorem{prop}[thm]{Proposition}
\newtheorem{defn}[thm]{Definition}
\newtheorem{cor}[thm]{Corollary}
\newtheorem*{examples}{Examples}
\theoremstyle{definition}
\newtheorem{rem}{Remark}[section]
\newcommand{\disp}{\displaystyle}
\DeclareMathOperator{\dist}{dist}
\DeclareMathOperator{\diam}{diam}
\DeclareMathOperator{\Lip}{Lip}
\DeclareMathOperator{\sgn}{sgn}
\DeclareMathOperator{\Cur}{Cur}
\DeclareMathOperator{\NCur}{NCur}
\DeclareMathOperator{\proj}{proj}
\DeclareMathOperator{\Fav}{Fav}
\DeclareMathOperator{\FavC}{\Fav_{\mathcal{C}}}
\newcommand{\eps}{\varepsilon}
\newcommand{\vp}{\varphi}
\newcommand{\al}{\alpha}
\newcommand{\be}{\beta}
\newcommand{\ga}{\gamma}
\newcommand{\de}{\delta}
\newcommand{\De}{\Delta}
\newcommand{\te}{\theta}
\newcommand{\la}{\lambda}
\newcommand{\om}{\omega}
\newcommand{\Om}{\Omega}
\newcommand{\nid}{\noindent}
\newcommand{\iny}{\infty}
\newcommand{\del}{ \partial}
\newcommand{\su}{\subset}
\newcommand{\norm}[1]{\left\| #1\right\|}
\newcommand{\abs}[1]{\left\vert#1\right\vert}
\newcommand{\set}[1]{\left\{#1\right\}}
\newcommand{\brac}[1]{\left[#1\right]}
\newcommand{\pr}[1]{\left( #1 \right) }
\newcommand{\pb}[1]{\left( #1 \right] }
\newcommand{\N}{\ensuremath{\mathbb{N}}}
\newcommand{\R}{\ensuremath{\mathbb{R}}}
\newcommand{\Z}{\ensuremath{\mathbb{Z}}}
\numberwithin{equation}{section}
\begin{document}

\title{A Quantification of a Besicovitch Nonlinear Projection Theorem \\ via Multiscale Analysis }
\author[Davey, Taylor]{Blair Davey \and Krystal Taylor}
\address{Blair Davey, Department of Mathematical Sciences, Montana State University}
\email{blairdavey@montana.edu}
\thanks{Davey is supported in part by the Simons Foundation Grant 430198.}
\address{Krystal Taylor, Department of Mathematics, The Ohio State University}
\email{taylor.2952@osu.edu}
\thanks{Taylor is supported in part by the Simons Foundation Grant 523555.}

\subjclass[2010]{28A80, 28A75, 28A78}
\keywords{Besicovitch Projection Theorem, Favard curve length, nonlinear projections, multiscale analysis}
\date{}


\begin{abstract}
The Besicovitch projection theorem states that if a subset $E$ of the plane has finite length in the sense of Hausdorff measure and is purely unrectifiable (so its intersection with any Lipschitz graph has zero length), then almost every orthogonal projection of $E$ to a line will have zero measure.
In other words, the Favard length of a purely unrectifiable $1$-set vanishes.
In this article, we show that when linear projections are replaced by certain nonlinear projections called \textit{curve projections}, this result remains true.
In fact, we go further and use multiscale analysis to prove a quantitative version of this Besicovitch nonlinear projection theorem.
Roughly speaking, we show that if a subset of the plane has finite length in the sense of Hausdorff and is nearly purely unrectifiable, then its \textit{Favard curve length} is very small.
Our techniques build on those of Tao, who in \cite{Tao09} proves a quantification of the original Besicovitch projection theorem.
\end{abstract}

\maketitle

\hypersetup{linktocpage}
  \setcounter{tocdepth}{1}
\tableofcontents

\section{Introduction and Main Results}

The \textit{Favard length} of a set $E\subset \mathbb{R}^2$ is defined as a rescaled average length of its orthogonal projections. 
That is,
$$\Fav(E) = \int_{\mathbb{S}^1} \abs{\proj_{\om}(E)} d\om,$$
where $\proj_{\om}(E)$ denotes the linear projection of a set $E$ onto the angle $\om\in \mathbb{S}^1 := [0, 2\pi)$.
Specifically,  for a point $\pr{x, y} \in \R^2$, $\proj_\om(x,y) = x\cos{\om} + y \sin{\om}$.
Here and throughout, we use $|\cdot|$ to denote the ($1$-dimensional) Lebesgue measure.  
The Favard length arises in a number of central questions in geometric measure theory, and its study is closely tied to that of rectifiability and  analytic capacity (see, for instance, \cite{EidVol}). 

The Besicovitch projection theorem provides a direct link between the rectifiability of a set and its Favard length.
We refer the reader to \cite[Theorem 6.13]{Fal85} and \cite[Theorem 18.1]{Mat95} for the proof of this theorem, as well as higher-dimensional analogues.  
We use the notation $\mathcal{H}^1(E)$ to denote the $1$-dimensional Hausdorff measure of a set $E$.  
A set $E \su \R^2$ is called \textit{purely $1$-unrectifiable} if for every Lipschitz function $f : \R \to \R^2$, it holds that $\mathcal{H}^1\pr{E \cap f(\R)} = 0$.  

\begin{thm}[Besicovitch Projection Theorem]
\label{Besi_thm}
Let $E \su \R^2$ be such that $\mathcal{H}^1(E) \in (0, \infty)$.  
Then $E$ is purely $1$-unrectifiable if and only if $\Fav(E) = 0$.  
\end{thm}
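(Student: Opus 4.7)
The plan is to prove the two implications of the ``if and only if'' separately. The easy direction is that if $E$ contains a rectifiable piece of positive length then $\Fav(E) > 0$; the hard direction is the converse.

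For the easy direction, suppose $E$ is not purely $1$-unrectifiable, so there exists a Lipschitz $f:\R\to\R^2$ with $\mathcal{H}^1(E\cap f(\R))>0$. After reparameterization I may assume $\Gamma := f(\R)$ is a rectifiable arc meeting $E$ in a set of positive $\mathcal{H}^1$-measure. By Rademacher's theorem, $f$ is differentiable almost everywhere, so $\Gamma$ admits an approximate tangent direction $\te(x)$ at $\mathcal{H}^1$-a.e.\ $x\in\Gamma$; for every $\om\in\BB{S}^1$ not orthogonal to $\te(x)$, the composition $\proj_\om\circ f$ has nonzero derivative near the corresponding parameter. The area formula then yields $|\proj_\om(E\cap\Gamma)|>0$ for $\om$ outside a measure-zero set of directions, so $\Fav(E)>0$.

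For the hard direction, assume $E$ is purely $1$-unrectifiable with $\mathcal{H}^1(E)\in(0,\infty)$. The argument rests on two ingredients. First, Besicovitch's density theorem: for $\mathcal{H}^1$-a.e.\ $x\in E$, the lower density $\liminf_{r\to 0}(2r)^{-1}\mathcal{H}^1(E\cap B(x,r))$ is at most $\tfrac{3}{4}<1$, so $E$ exhibits a definite ``thinness'' at arbitrarily small scales. Second, a multiscale covering argument: at each dyadic scale $r_k\to 0$, extract a Vitali-type disjoint family of balls $B(x_k,r_k)$ witnessing the density deficit, and show that, for $\om$ outside a small exceptional set of directions, the deficit in physical space translates --- via a pigeonhole on intervals of length comparable to $r_k$ on the projected line --- into a multiplicative contraction of $|\proj_\om(E)|$. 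Iterating over scales produces an estimate of the form $|\proj_\om(E)|\le C\prod_k (1-\eta_k)$, which tends to $0$ for a.e.\ $\om$.

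The principal obstacle is making the second ingredient rigorous without losing control of the exceptional set of directions, since a naive union bound across infinitely many scales would exhaust all of $\BB{S}^1$. The classical solution is an intricate scale-selection argument due to Besicovitch; a cleaner, quantitative version is given by Tao \cite{Tao09} and is precisely what is adapted to curve projections in the remainder of this paper. This step --- converting metric density deficits at each scale into projection deficits while carefully managing the exceptional directions --- is where essentially all the difficulty of the theorem resides.
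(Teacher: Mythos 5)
Your ``easy'' direction is essentially fine: it is the standard argument (restrict to a Lipschitz piece meeting $E$ in positive $\mathcal{H}^1$-measure, use Rademacher plus the one-dimensional area formula to see that $\proj_\om$ cannot crush a positive-measure subset of the curve except for the single direction orthogonal to the tangent), and it matches what the references the paper cites do. Note, for context, that the paper does not prove Theorem~\ref{Besi_thm} at all; it invokes \cite[Theorem 6.13]{Fal85} and \cite[Theorem 18.1]{Mat95}, so the only comparison available is with those classical proofs and with the quantitative scheme of \cite{Tao09} that the rest of the paper adapts.

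The hard direction, however, contains a genuine gap, and you have named it yourself: the step converting the scale-by-scale density deficit into the claimed bound $|\proj_\om(E)|\le C\prod_k(1-\eta_k)$ for a.e.\ $\om$ is never carried out, and as formulated it is not believable. The Besicovitch lower-density bound $\le \tfrac34$ by itself does not produce a projection deficit: the mass missing from a ball $B(x_k,r_k)$ can be covered, after projecting, by entirely different parts of $E$ at the same or at other scales, so disjointness of the Vitali balls in the plane gives no disjointness (and hence no multiplicative loss) on the projected line, and there is no product formula to iterate. Managing the exceptional directions across infinitely many scales is not a technical nuisance to be deferred --- it is the entire content of the theorem. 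The proofs you would need to follow do something structurally different: they exploit that purely unrectifiable sets have approximate tangents almost nowhere, decompose the pairs $(e,\om)$ into condensation directions of the first and second kind and high-multiplicity lines (Mattila's $A_1$, $A_{2,\de}$, $A_3$ sets; Falconer's points of radiation), and finish with Fubini-type and density/maximal-function arguments; the quantitative version in \cite{Tao09}, which Sections~\ref{decomp}--\ref{conclusion} of this paper adapt to curve projections, replaces this with a normal/non-normal pair decomposition, pigeonholed scale selection, a rectifiability-constant bound for the non-normal pairs, and Hardy--Littlewood maximal estimates for the rest. As written, your hard direction is a statement of intent that outsources precisely the step where the theorem lives, so the proposal does not constitute a proof.
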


In this paper, we use multiscale analysis to obtain upper bounds on the \textit{Favard curve length} for nearly unrectifiable sets.  
The quantitative version of rectifiability introduced by Tao in \cite{Tao09} is used to describe what we mean by a set being ``nearly unrectifiable", while the Favard curve length is defined using the nonlinear projection maps introduced in \cite{ST17}, \cite{STint}, and \cite{CDT21}. 
More precisely, in Theorem~\ref{QCBT}, we prove that an upper bound on the rectifiability constant given in Definition~\ref{rect_const_defn} translates to an upper bound on the Favard curve length described by Definition~\ref{FavC}. 
As applications of this theorem, we 
\begin{enumerate}
\item recover the qualitative Besicovitch projection theorem in this nonlinear setting; and 
\item obtain a bound on the rate of decay for the Favard curve length of the $n^{\text{th}}$ generation in the construction of the four-corner Cantor set.  
\end{enumerate}
The upper bound described by (2) is by no means optimal, but we include this result as an example of the utility of the main theorem.
In fact, a much faster rate of decay, as well as a lower bound, is obtained in \cite{CDT21}.
For a qualitative Besicovitch projection theorem for nonlinear families of mappings satisfying a transversality condition, see \cite{HovJ2Led}.

\subsection{Motivation}
\label{Buffon}
We consider a probabilistic interpretation of Favard length known as the Buffon needle problem.  
Let $E \su \brac{0, 1}^2$.
The \textit{Buffon needle problem} asks the probability that a needle, or a line, that is dropped at random onto the plane intersects the set $E$ given that it intersects $\brac{0,1}^2$.
We define this probability as
$$\mathbf{P} := P\pr{\ell \cap E \ne \emptyset : \ell \textrm{ is any line in $\R^2$ for which } \ell \cap \brac{0,1}^2 \ne \emptyset}.$$
If we parametrize all such lines by letting $\ell_{\be, \om}$ denote the line passing through $\pr{0, \be}$ with direction orthogonal to $\om$, then
\begin{align*}
\mathbf{P} 
&\simeq \abs{\set{(\be, \om) \in \R \times \mathbb{S}^1 : E \cap \ell_{\be, \om} \ne \emptyset}}.
\end{align*}
Upon fixing $\om \in \mathbb{S}^1$, we see that
\begin{align*}
\set{\be \in \R : E \cap \ell_{\be, \om} \ne \emptyset} \simeq \proj_{\om}\pr{E},
\end{align*}
where $\proj_{\om}\pr{S}$ denotes the linear projection of a set $S$ onto the angle $\om$.
By Fubini's theorem, we see that
\begin{align}
\mathbf{P}
&\simeq \int_{\mathbb{S}^1} \abs{\set{\be \in \R : E \cap \ell_{\be, \om} \ne \emptyset}} d\om
\simeq \int_{\mathbb{S}^1} \abs{\proj_{\om}(E)} d\om
=: \Fav(E).
\end{align}
Therefore, the Favard length is connected to the classical Buffon needle problem.

To motivate the introduction of the \textit{Favard curve length}, we now we ask what happens when lines are replaced by more general curves.
Let $\mathcal{C}$ denote a curve in $\R^2$.
We want to calculate the probability that $\mathcal{C}$ intersects $E$ when it is dropped randomly onto the plane and intersects $\brac{0,1}^2$.
Define
\begin{align*}
\mathbf{P}_\mathcal{C}
&:= P\pr{\mathcal{C}_e \cap E \ne \emptyset : \mathcal{C}_e \textrm{ is a translation of $\mathcal{C}$ for which } \mathcal{C}_e \cap \brac{0,1}^2 \ne \emptyset}.
\end{align*}
Then
$$\mathbf{P}_\mathcal{C} \simeq \abs{\set{(\al, \be) \in \R^2 : E \cap \pr{(\al, \be) + \mathcal{C}} \ne \emptyset}}.$$
Observe that $E \cap \pr{(\al, \be) + \mathcal{C}} \ne \emptyset$ iff $(\al, \be) \in E - \mathcal{C}$.

Associated to the curve $\mathcal{C}$ in $\R^2$ is a family of nonlinear projections that we call \textit{curve projections}, $\Phi_\al : \R^2 \to \mathcal{P}(\R)$, where $\mathcal{P}(\R)$ denotes the power set of $\R$.
For each $\al \in \R$ and $p \in \R^2$, $\Phi_\al(p)$ is the set of $y$-coordinates of the intersection of  $p - \mathcal{C}$ with the line $x = \al$.
That is, 
\begin{equation}
\label{PaProj}
\Phi_\al(p) = \set{\be \in \R : (\al, \be) \in \pr{p - \mathcal{C}} \cap \set {x = \al}}.
\end{equation}
The inverse map $\Phi_\al^{-1} : \R \to \mathcal{P}(\R^2)$ is given by 
\begin{equation}
\label{PaProjIn}
\Phi_\al^{-1}(\be) = (\al, \be) + \mathcal{C}.
\end{equation}

With this new notation, it follows that
\begin{align*}
\mathbf{P}_\mathcal{C}
&\simeq \abs{\set{(\al, \be) \in \R^2 : E \cap \Phi_\al^{-1}(\be) \ne \emptyset}}.
\end{align*}
And for each fixed $\al \in \R$, we have
\begin{align*}
\set{\be \in \R :  E \cap \Phi_\al^{-1}(\be) \ne \emptyset}
&= \Phi_\al(E).
\end{align*}
As above, an application of Fubini's theorem shows that
\begin{equation}\label{probFav}
\mathbf{P}_\mathcal{C}
\simeq \int_\R \abs{\set{\be : E \cap \Phi_\al^{-1}(\be) \ne \emptyset}} d\al
= \int_\R \abs{\Phi_\al(E)} d\al
=: \FavC(E).
\end{equation}
Therefore, in this nonlinear case, the Favard curve length is proportional to the probability associated to the so-called \textit{Buffon curve problem}.

The expression in \eqref{probFav} is also equivalent to the measure of the Minkowki difference set $E -\mathcal{C}$.  
That is,  
$$\FavC(E) = \int_\R \abs{\Phi_\al(E)} d\al \simeq \abs{E -\mathcal{C}}.$$
This observation is explained in detail in \cite{CDT21}; see also \cite{ST17} and \cite{STint}, where such sum sets are studied.

\subsection{Projections and Favard length}\label{Projections and Favard length}

For a curve $\mathcal{C}$ in $\R^2$, we define a family of curve projections $\Phi_\al : \R^2 \to \mathcal{P}(\R)$ by \eqref{PaProj} with inverse given by \eqref{PaProjIn}.
We now formalize the definition of the Favard curve length.

\begin{defn}[Favard curve length]
\label{FavC}
Let $E \su \R^2$ and let $\mathcal{C}$ be some curve in $\R^2$.
We define the {\bf Favard curve length} as 
\begin{equation*}
\FavC(E)
= \abs{\set{(\al, \be) \in \R^2 : \Phi_\al^{-1}(\be) \cap E \ne \emptyset}}
= \int_{\R} \abs{\Phi_\al(E)} d\al.
\end{equation*}
If $\mathcal{E} \su \R^3$, then with $E_\al = \set{e \in \R^2 : \pr{e, \al} \in \mathcal{E}}$, the {\bf Favard curve length} is given  by
\begin{equation*}
\FavC(\mathcal{E}) 
= \abs{\set{(\al, \be) \in \R^2 : \pr{\Phi_\al^{-1}(\be) \times \set{\al}} \cap \mathcal{E} \ne \emptyset}}
= \int_{\R} \abs{\Phi_\al(E_\al)} d\al.
\end{equation*}
\end{defn}

Although we defined $\Phi_\al$ to be the set of $y$-values of the intersection of $p - \mathcal{C}$ with a vertical line defined by $x = \al$, the equivalence between the quantities in Definition~\ref{FavC} still holds for any other choice of orthonormal basis.
For example, we could define $\Psi_\be$ to be the set of $x$-values of the intersection of $p - \mathcal{C}$ with a horizontal line $y = \be$, and then we would compute the Favard curve length by integrating over $\be$.

\subsection{Hausdorff measure and rectifiability}
\label{defn section}

Some additional notions that we require include the Hausdorff measure, rectifiability, as well as the rectifiability constant of a set.
We build the Hausdorff measure via the restricted Hausdorff content, which will be useful when we work with the multiscale analysis.

\begin{defn}[Restricted Hausdorff content; Hausdorff measure]
Let $E \su \R^2$ and let $0 \le r_- < r_+$.
The {\bf restricted Hausdorff content} $\mathcal{H}^1_{r_-, r_+}(E)$ is defined as
\begin{equation*}
\mathcal{H}^1_{r_-, r_+}(E) = \inf \sum_{B \in \mathcal{B}} \diam(B),
\end{equation*}
where the infimum ranges over all at most countable collections $\mathcal{B}$ of open balls $B$ with radius $r(B) \in \brac{r_-, r_+}$ that cover $E$.
The $1$-dimensional {\bf Hausdorff measure} is then defined as
$$\mathcal{H}^1(E) = \lim_{r_+ \to 0}\mathcal{H}^1_{0, r_+}(E).$$
\end{defn}

\begin{defn}[Rectifiability; Unrectifiability]
A set $E \su \R^2$ is said to be {\bf $1$-rectifiable} if there exists a countable collection $\set{f_i}$ of Lipschitz curves $f_i : \R \to \R^2$ such that
$$\mathcal{H}^1\pr{E \setminus \cup f_i(\R)} = 0.$$
Conversely, $E$ is called {\bf purely $1$-unrectifiable} if for every Lipschitz function $f : \R \to \R^2$, it holds that $\mathcal{H}^1\pr{E \cap f(\R)} = 0$.
\end{defn}

To present the statement of our theorem, we first need to quantify  the notion of rectifiability.
Thus, we want to find a way to measure how much of a given set $E$ is covered, in an appropriate sense, by Lipschitz curves. 
To do this, we follow Tao's definition \cite[Definition 1.10]{Tao09}.

\begin{defn}[Rectifiability constant]
\label{rect_const_defn}
Let $E \su \R^2$ be a set, and let $\eps, r, M > 0$.
The {\bf rectifiability constant} $R_E(\eps, r, M)$ of $E$ is defined by
\begin{equation*}
R_E(\eps, r, M) = \sup \frac{\abs{\set{x \in J: x \, \om_1 + \pr{F(x) + y} \om_2 \in E \text{ for some } \abs{y} \le \eps}}}{\abs{J}},
\end{equation*}
where the supremum ranges over all orthonormal pairs $\om_1$, $\om_2$ in $\mathbb{S}^1$, all Lipschitz functions $F : \R \to \R$ with Lipschitz constant bounded above by $M$, and all intervals $J \su \R$ for which $\abs{J} \ge r$.
\end{defn}

\begin{rem}
In contrast to above, here we use $\mathbb{S}^1$ to denote a set of unit $2$-vectors.
The use of $\mathbb{S}^1$, whether it denotes angles or vectors in a given direction, should be clear from the context.
\end{rem}

We see that for any $E \su \R^2$ and any choice of parameters, $R_E(\eps, r, M) \in \brac{0, 1}$.
To gain some intuition for this definition, we consider some examples.

\begin{examples} \;
\begin{enumerate}
\item Let $F : \R \to \R$ be a Lipschitz function and set $E = \set{\pr{x, F(x)} : x \in \brac{0, 1}}$.
For any $\eps > 0$, $r \le 1$, and $M \ge \Lip\pr{F}$, we have $R_E(\eps, r, M) = 1$.
\item Let $N\in \mathbb{N}$ be even with $N >10$.
Set $\eps = \frac{1}{2N}$. 
For $i = 1, \ldots, N$, let $x_i = \pr{\frac{2i-1}{2N}, (-1)^i} \in \R^2$. Define $\disp E =\bigcup_{i=1}^N \partial B_{\eps}(x_i)$, where $\partial B_i$ denotes the boundary of $B_i$.
Note that the projection of $E$ onto the $x$-axis is the full interval $[0,1]$.  
However, for any $M \le \frac N{10}$, $ R_E(\eps, 1, M) \le \frac{1}{2}$.
\item Let $E \su \R^2$ be purely unrectifiable.
As shown in \cite[Proposition 1.11]{Tao09},  for every choice of $r$ and $M$, $\disp \lim_{\eps \to 0} R_E(\eps, r, M) = 0$.
Therefore, for any $\de > 0$, there exists $\eps > 0$ so that $R_E(\eps, r, M) \le \de$.
\end{enumerate}
\end{examples}

In conclusion, if $E$ is almost purely unrectifiable, then $R_E(\eps, r, M)$ should be near $0$.
And conversely, if $E$ is almost rectifiable, then we expect $R_E(\eps, r, M)$ to be near $1$.

\subsection{Theorem statement}

We now present the statement of our main result, which can be
compared to \cite[Theorem 1.13]{Tao09}

\begin{thm}[Quantitative Besicovitch Nonlinear Projection Theorem]
\label{QCBT}
Let $E \su \brac{0,1}^2$ be a compact set for which $\mathcal{H}^1(E) \le L$ for some $L \in (0, \iny)$.
Assume that for some sufficiently large $N \in \N$, there is a sequence of scales
\begin{equation}
0 < r_{N}^- \le r_{N}^+ < \cdots < r_{1}^- \le r_{1}^+ \le 1
\end{equation}
satisfying the following properties:
\begin{enumerate}
\item[-] Uniform length bound: For all $n = 1, 2, \ldots, N$, 
\begin{equation}
\label{Ulb}
\mathcal{H}^1_{r_{n}^-, r_{n}^+}(E) \le L
\end{equation}
\item[-] Separation of scales : For all $n = 1, 2, \ldots, N-1$, 
\begin{equation}
\label{Sos}
r_{n+1}^+ \le \tfrac 1 2 r_{n}^-
\end{equation}
\item[-] Near unrectifiability: For all $n = 1, 2, \ldots, N-2$, 
\begin{equation}
\label{unrect}
R_E\pr{r_{n+2}^+, r_{n}^-, \frac{1}{r_{n}^-}} \le N^{-1/100}.
\end{equation}
\end{enumerate}
If $\mathcal{C}$ is a piecewise $C^1$ curve of finite length with a piecewise bilipschitz continuous unit tangent vector, then
$$\FavC(E) \lesssim N^{-1/100} L.$$
\end{thm}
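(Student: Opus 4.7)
My plan is to adapt Tao's multiscale proof of the quantitative Besicovitch theorem in \cite{Tao09} to this nonlinear setting by locally linearizing the curve projections. As a preliminary reduction, I decompose $\mathcal{C}$ into finitely many $C^1$ subarcs with bilipschitz unit tangent. Since $E - \mathcal{C} = \bigcup_j (E - \mathcal{C}_j)$, it suffices to prove the estimate for each piece, so I may assume that $\mathcal{C}$ itself is $C^1$ with bilipschitz unit tangent and that the tangent stays away from vertical (otherwise I switch to the equivalent horizontal-line formulation of $\FavC$ noted after Definition~\ref{FavC}). Then for any ball $B$ of radius $r$ and any $\alpha \in \R$, the restriction $\Phi_\alpha|_B$ agrees, up to an error of order $r^2$ controlled by the tangent's bilipschitz constant, with a linear projection $\proj_\om$, where $\om$ is perpendicular to the tangent of $\mathcal{C}$ at the crossing of the translate with $\{x = \alpha\}$. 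This linearization is the key device for importing linear-projection techniques into the nonlinear setting.

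Given the linearization, the remainder of the proof closely parallels Tao's multiscale argument. At each scale $n$, the hypothesis \eqref{Ulb} furnishes a cover $\mathcal{B}_n$ of $E$ by balls with radii in $[r_n^-, r_n^+]$ and total diameter at most $2L$. A trivial sum-of-areas bound gives $\FavC(E) \lesssim L$ uniformly in $n$, and the goal is to extract the saving $N^{-1/100}$. To do this, I decompose $\{(\alpha,\beta): \Phi_\alpha^{-1}(\beta) \cap E \ne \emptyset\}$ according to the multiplicity with which the translated curve $\Phi_\alpha^{-1}(\beta)$ meets $\mathcal{B}_n$. On the high-multiplicity region, at least two balls of the scale-$n$ cover share a common translate of $\mathcal{C}$; using Step~1, this is transferred, ball by ball, into a subset of $E$ of definite relative size that lies within distance $r_{n+2}^+$ of a Lipschitz graph of slope $\lesssim 1/r_n^-$ on an interval of length $\geq r_n^-$. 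The near-unrectifiability hypothesis \eqref{unrect} then forces that subset's density to be at most $N^{-1/100}$, producing a per-scale gain of this size. Summing over the $N$ scales and using the separation of scales \eqref{Sos} so that the errors at consecutive scales telescope cleanly yields $\FavC(E) \lesssim N^{-1/100}L$.

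The main obstacle will be calibrating Step~1 against the scales used in \eqref{unrect}: the curvature-induced linearization error $O((r_n^+)^2)$ on a ball at scale $n$ must fit inside the vertical tolerance $r_{n+2}^+$, and the range of slopes produced by linearization across a scale-$r_n^-$ window must stay under the envelope $1/r_n^-$. The separation condition \eqref{Sos} together with the compactness of $\mathcal{C}$ provides enough slack once $N$ is taken sufficiently large, but the bookkeeping is delicate, especially near the finitely many non-smooth points of $\mathcal{C}$, where an auxiliary covering argument must discard the balls that straddle a corner. A secondary difficulty, inherited from \cite{Tao09}, is the pigeonhole scheme used to convert a per-scale rectifiable density into the averaged saving $N^{-1/100}$ without losing powers of $N$ in the opposite direction; this should go through once the linearization is in place.
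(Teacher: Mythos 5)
There is a genuine gap, and it is structural rather than a matter of missing detail. The way you assemble the final bound does not work: you propose to extract a gain of $N^{-1/100}$ ``per scale'' from the near-unrectifiability hypothesis and then to ``sum over the $N$ scales.'' Summing $N$ contributions each of size $N^{-1/100}L$ gives $N^{99/100}L$, which is worse than the trivial bound $\FavC(E)\lesssim L$; there is no telescoping that rescues this, because the regions you describe at different scales are not nested differences of a fixed quantity. In both Tao's argument and the paper's, the large number of scales is used in the opposite way: one performs a \emph{single} decomposition of the set of pairs $(e,\al)$, and the scales enter only through a sliding pigeonhole principle (Lemma~\ref{pHole}) that selects a few good scale indices $n_0,n_1,n_2$ at which the transition sets (high-multiplicity curves $H_n$, positive-multiplicity curves $P_n$, high-density curve strips $D_n$) have measure $\lesssim N^{-3/100}L$. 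Near unrectifiability is then applied only once, to the non-curve (non-normal) pairs after an exceptional low-density set is removed by a Vitali argument; the high-multiplicity piece is handled by a Fubini-type counting argument against the uniform length bound, and the high-density and remaining curve-pair pieces by the Hardy--Littlewood maximal inequality. Your sketch conflates these roles (e.g.\ ``at least two balls share a common translate'' is not high multiplicity, which requires $N^{1/100}$ separated points, and that region is not the one to which \eqref{unrect} is applied), so the mechanism that actually produces the saving is absent from the proposal.

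The linearization step is also not the black-box reduction you suggest. It is true, and the paper exploits it (Lemma~\ref{bowtiecontainment} and the Taylor expansions in Section~\ref{goodPoints}), that on a ball of radius $r$ the map $\Phi_\al$ differs from a tilted linear projection by $O(\la r^2)$. But the direction of that linear projection depends on $\al$ and on the point, and $\FavC$ integrates over translations $(\al,\be)\in\R^2$, not over directions $\om\in\mathbb{S}^1$; there is no change of variables turning $\int_\R\abs{\Phi_\al(E)}\,d\al$ into $\int_{\mathbb{S}^1}\abs{\proj_\om(E)}\,d\om$, so one cannot simply cite Tao's theorem after linearizing. This is precisely why the paper must rebuild the whole scheme with curve double-sectors whose aperture is tied to $M_2=10^4/r_{n_2-200}^-$, extended curves to avoid endpoint degeneracies, curve strips $\Phi_{\al,+}^{-1}(J)$ in place of straight strips, and the parameter-mass Lemma~\ref{DeltaClusterLemma} controlling the $\al$-slices of $F$ by the pigeonholed set $\Delta$. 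To repair your proposal you would need to supply this decomposition of $\mathcal{E}$ into curve and non-curve pairs, the scale selections via the pigeonhole lemma, and the four separate estimates (Vitali/low-density, Lipschitz-graph plus \eqref{unrect}, Fubini for $\widetilde H$, maximal function for $\widetilde D$ and $F$), rather than a per-scale summation.
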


\begin{rem}
For convenience, we will assume that $N^{1/100} \in \N$.
Assuming that $N^{1/100} \ge 3$ will suffice.
\end{rem}

\subsection{Applications}

Before proceeding to a discussion of the proof of this quantitative Besicovitch nonlinear projection theorem, 
 we present two applications of the theorem.

\subsubsection{Application $\#$1} First, we demonstrate how the quantitative result implies the following qualitative version of the theorem.

\begin{thm}[Qualitative Besicovitch Nonlinear Projection Theorem]
Let $E \su \brac{0,1}^2$ be a compact set for which $\mathcal{H}^1(E) <\infty$. 
Assume that $E$ is purely unrectifiable.
If $\mathcal{C}$ is a piecewise $C^1$ curve of finite length with a piecewise bilipschitz continuous unit tangent vector, then $\FavC(E) = 0$.
\end{thm}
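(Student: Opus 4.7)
The plan is to derive the qualitative statement from the quantitative Theorem~\ref{QCBT} by an arbitrary-$\eps$ argument. Fixing $\eps > 0$, I would choose $N \in \N$ large enough that the conclusion of Theorem~\ref{QCBT} reads $\FavC(E) \le \eps$ when applied with $L := \mathcal{H}^1(E) + 1$, and then exhibit scales $0 < r_N^- \le r_N^+ < \cdots < r_1^- \le r_1^+ \le 1$ verifying hypotheses \eqref{Ulb}--\eqref{unrect}. Sending $\eps \to 0$ will then yield $\FavC(E) = 0$.

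The scales would be built inductively in the order $r_1^+, r_1^-, r_2^+, r_2^-, \ldots, r_N^+, r_N^-$. The core auxiliary fact used to produce the length bound \eqref{Ulb} at each stage is the following: for every $r_+ > 0$ there exists $r_- \in (0, r_+]$ with $\mathcal{H}^1_{r_-, r_+}(E) \le \mathcal{H}^1(E) + 1$. Indeed, $\mathcal{H}^1_{0, r_+}(E) \le \mathcal{H}^1(E)$, so one can take a countable cover of $E$ by open balls of radius $\le r_+$ with total diameter at most $\mathcal{H}^1(E) + \tfrac12$; since $E$ is compact, we extract a finite subcover of size $k$; then enlarging every ball of radius smaller than $r_-$ up to radius exactly $r_-$ changes the total diameter by at most $2 k r_-$, which is $\le \tfrac12$ as soon as $r_- \le 1/(4k)$. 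The resulting cover is admissible for $\mathcal{H}^1_{r_-, r_+}(E)$.

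For the inductive step I would, given $r_1^\pm, \ldots, r_n^\pm$, choose $r_{n+1}^+$ small enough to satisfy simultaneously the separation condition $r_{n+1}^+ \le \tfrac12 r_n^-$ and (when $n \ge 2$) the near-unrectifiability condition $R_E(r_{n+1}^+, r_{n-1}^-, 1/r_{n-1}^-) \le N^{-1/100}$. The latter is possible because $r_{n-1}^-$ is already fixed and $E$ is purely unrectifiable, so by Proposition~1.11 of \cite{Tao09} (quoted in item~(3) of the Examples above) one has $R_E(\eps', r, M) \to 0$ as $\eps' \to 0$ for each fixed $r, M$. With $r_{n+1}^+$ in hand, the auxiliary fact above supplies an admissible $r_{n+1}^-$, and the process continues for $N$ stages.

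With the scales so constructed, all three hypotheses of Theorem~\ref{QCBT} hold, and its conclusion gives $\FavC(E) \lesssim N^{-1/100}(\mathcal{H}^1(E) + 1) \le \eps$, as required. The only place where any real care is needed is the cover-enlargement argument producing \eqref{Ulb}, since the restricted content $\mathcal{H}^1_{r_-, r_+}(E)$ is a priori sensitive to the lower cutoff $r_-$; compactness of $E$ is precisely what caps the number of small balls in the cover and lets the enlargement be absorbed into the slack between $\mathcal{H}^1(E)$ and $L$.
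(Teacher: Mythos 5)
Your proposal is correct and follows essentially the same route as the paper: recursively choose the scales, using pure unrectifiability (Tao's Proposition 1.11, i.e.\ Example~(3)) to force the near-unrectifiability bound and compactness to introduce a lower radius cutoff in the restricted Hausdorff content, then apply Theorem~\ref{QCBT} and let $N \to \infty$. The only differences are cosmetic: you spell out the finite-subcover/ball-enlargement argument that the paper compresses into the assertion $\mathcal{H}^1_{0, r_{n}^+}(E) = \lim_{r_- \to 0}\mathcal{H}^1_{r_-, r_{n}^+}(E)$ for compact $E$, and you take $L = \mathcal{H}^1(E)+1$ where the paper takes $L = 2\mathcal{H}^1(E)$.
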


\begin{proof}
To apply Theorem~\ref{QCBT}, we need a sequence of scales that satisfies the uniform length bounds, separation of scales, and near unrectifiability.
Fix some $N\in \N$ so that $N^{1/100} \geq 3$.
By Example (3) above, since $E$ is purely unrectifiable, then for any $\de, r, M > 0$, there exists 
\begin{equation}
\label{eps0Defn}
\eps_0 = \eps_0\pr{\de, r, M} > 0
\end{equation}
so that whenever $\eps \le \eps_0$, it holds that $R_E\pr{\eps, r, M} \le \de$.
We will choose $\de = N^{-1/100}$.

For notational convenience, set $r_{0}^- = r_{-1}^- = 1$.
We recursively define each $r_{n}^\pm$ for $n= 1, \ldots, N$, starting from $r_{1}^\pm$, as follows.
Define
$$r_n = \min\set{\frac 1 2 r_{n-1}^-, \eps_0\pr{N^{-1/100}, r_{n-2}^-, \frac 1 {r_{n-2}^-}}},$$
where $\eps_0$ is as defined in \eqref{eps0Defn}.
Since
$$\mathcal{H}^1(E) = \lim_{r_+ \to 0} \mathcal{H}^1_{0, r_+}(E),$$
then there exists $r_{n}^+ \in \pb{0, r_n}$ so that
\begin{equation}
\label{r+Choice}
\abs{\mathcal{H}^1_{0, r_{n}^+}(E) - \mathcal{H}^1(E)} < \frac {\mathcal{H}^1(E)} 2.
\end{equation}
Since $E$ is compact, then 
$$\mathcal{H}^1_{0, r_{n}^+}(E) = \lim_{r_- \to 0}\mathcal{H}^1_{r_-, r_{n}^+}(E).$$
Thus, there exists $r_{n}^- \in (0, r_{n}^+]$ so that 
\begin{equation}
\label{r-Choice}
\abs{\mathcal{H}^1_{r_{n}^-, r_{n}^+}(E) - \mathcal{H}^1_{0, r_{n}^+}(E)} < \frac {\mathcal{H}^1(E)} 2.
\end{equation}
We continue this process until $r_{N}^\pm$ have been defined.

It follows from the triangle inequality, \eqref{r+Choice}, and \eqref{r-Choice}, that for all $1 \le n \le N$, 
\begin{align*}
\mathcal{H}^1_{r_{n}^-, r_{n}^+}(E)
&\le \abs{\mathcal{H}^1_{r_{n}^-, r_{n}^+}(E) - \mathcal{H}^1_{0, r_{n}^+}(E)}
+ \abs{\mathcal{H}^1_{0, r_{n}^+}(E) - \mathcal{H}^1(E)}
+ \mathcal{H}^1(E)
< 2 \mathcal{H}^1(E).
\end{align*}
In particular, we have the required uniform length bounds.
Since $r_{n}^+ \le r_n \le \frac 1 2 r_{n-1}^-$ for all $1 < n \le N$, then we also have separation of scales.
And because $r_{n}^+ \le r_n \le \eps_0\pr{N^{-1/100}, r_{n-2}^-, \frac 1 {r_{n-2}^-}}$, where $\eps_0$ is as defined in \eqref{eps0Defn}, then for all $2 < n \le N$,
$$R_E\pr{r_{n}^+, r_{n-2}^-, \frac 1 {r_{n-2}^-}} \le N^{-1/100}.$$ 
This shows that near unrectifiability is satisfied as well.

Theorem~\ref{QCBT} now implies that $\FavC(E) \lesssim 2 N^{-1/100} \mathcal{H}^1(E)$.
Since we may repeat this process for any $N \in \N$ sufficiently large, then we can show that for any $\eps > 0$, $\FavC(E) < \eps$.
In particular, $\FavC(E) = 0$.
\end{proof}

\subsubsection{Application $\#$2}

For the second application, we use Theorem~\ref{QCBT} to estimate the rate of decay of the Favard curve length of the four-corner Cantor set.
That is, we establish upper bounds for each $\FavC(K_n)$, where $K_n$ denotes the $n^{\text{th}}$ generation.

First, we formally define the four-corner Cantor set in the plane.
We start by describing the middle-half Cantor set in the real line, denoted by $C$.
For any $n \in \N \cup \set{0}$, let $C_n$ denote the $n^{\text{th}}$ generation of the set $C$.
Then $C_0 = \brac{0,1}$ and for any $n \in \N$, 
$$
C_n=\bigcup_{\substack{a_j\in\{0, 3\}\\j=1, \ldots, n}}\brac{\sum_{j=1}^na_j4^{-j}, \sum_{j=1}^na_j4^{-j}+4^{-n}}.
$$
For example, $\disp C_1 = \brac{0, \tfrac 1 4} \cup \brac{\tfrac 3 4, 1}$, the set that is obtained by removing the middle half of $C_0$.
Each $C_{n+1}$ is obtained through the self-similar process of removing the middle half of all intervals that comprise $C_n$.
We define $\disp C  = \bigcap_{n=0}^\iny C_n$, the \textit{middle-half Cantor set}.
Then the \textit{four-corner Cantor set} is the product set given by $K=C\times C$. 
This means that the $n^{\text{th}}$ generation of $K$ is given by 
\begin{equation}
\label{KnDefn}
K_n = C_n \times C_n,
\end{equation}
so we may realize the four-corner Cantor set as $\disp K  = \bigcap_{n=0}^\iny K_n$.

As each $K_n$ is a $2$-set, Theorem~\ref{QCBT} may not be applied with $E = K_n$.
Thus, we define a $1$-set associated to each $K_n$ by taking its boundary.
That is, set $E_n = \del K_n$.
As we will see below, an upper bound on the Favard curve length of each $E_n$ automatically implies the same bound for the curve length of $K_n$.

To apply Theorem~\ref{QCBT} to each $E_n$, we need an upper bound for the rectifiability constants of each $E_n$.
In \cite{Tao09}, the bounds for these constants are proved through a quantitative two-projection theorem. 
We rely on the following corollary to a result of Tao:

\begin{prop}[Rectifiability constant for $E_n = \del K_n$, Corollary to Proposition 1.20 from \cite{Tao09}]
\label{rCEsts}
Let $n \ge m > \ell \ge 0$.
Define $E_n = \del K_n$.
If $1 \le M \le c \brac{\log\pr{m - \ell + 1}}^{1/100}$ for some sufficiently small fixed constant $c > 0$, then
$$R_{E_n}\pr{2^{-m}, 2^{-\ell}, M} \lesssim \brac{\log\pr{m - \ell + 1}}^{-1/100}.$$
\end{prop}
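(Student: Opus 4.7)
My plan is to derive the proposition as an essentially immediate consequence of Tao's Proposition 1.20 from \cite{Tao09}, by exploiting a monotonicity property of the rectifiability constant. Inspecting Definition~\ref{rect_const_defn}, the set
$$\set{x \in J : x\, \om_1 + \pr{F(x)+y}\om_2 \in E \text{ for some } \abs{y} \le \eps}$$
appearing in the numerator is non-decreasing in $E$: enlarging $E$ can only add new values of $x$ to this set, since any choice of $\abs{y} \le \eps$ that witnesses membership for $E$ a fortiori witnesses it for any superset. Consequently, for any sets $A \su B$ and any admissible parameters, $R_A(\eps, r, M) \le R_B(\eps, r, M)$.

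Since each $K_n$ is a finite union of closed squares and hence itself closed, its topological boundary satisfies $E_n = \del K_n \su K_n$. Combining this inclusion with the monotonicity just described yields
$$R_{E_n}\pr{2^{-m}, 2^{-\ell}, M} \le R_{K_n}\pr{2^{-m}, 2^{-\ell}, M}.$$
It then suffices to invoke Tao's Proposition 1.20 to bound the right-hand side by $\lesssim \brac{\log\pr{m-\ell+1}}^{-1/100}$ in the stated range of $M$. The hypothesis $n \ge m$ guarantees that the self-similar construction defining $K_n$ is ``complete'' at all scales between $2^{-\ell}$ and $2^{-m}$, so the quantitative two-projection theorem underlying Tao's bound applies uniformly in $n$ and delivers precisely the claimed inequality.

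The main (essentially the only) obstacle is bookkeeping: I would need to transcribe Tao's Proposition 1.20 carefully and verify that the inner scale $2^{-m}$, the outer scale $2^{-\ell}$, the Lipschitz bound $M$, and the logarithmic exponent $1/100$ all align with the hypotheses used there. No new Favard-side or combinatorial ideas are required; the quantitative two-projection machinery of \cite{Tao09} does all of the heavy work, and the only genuine content contributed by this corollary is the elementary monotonicity of $R_E$ in $E$ together with the inclusion $\del K_n \su K_n$.
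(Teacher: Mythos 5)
Your monotonicity observation is correct: in Definition~\ref{rect_const_defn}, for any fixed $\pr{\om_1,\om_2}$, $F$, and $J$, the set $\set{x \in J : x\,\om_1 + \pr{F(x)+y}\om_2 \in E,\ \abs{y}\le\eps}$ is manifestly non-decreasing in $E$, so $A \su B$ implies $R_A(\eps,r,M)\le R_B(\eps,r,M)$; and $E_n=\del K_n \su K_n$ since $K_n$ is a finite union of closed squares. Provided Tao's Proposition~1.20 is read off as a bound on $R_{K_n}\pr{2^{-m},2^{-\ell},M}$ in exactly this parameterization (scales $2^{-m}$ and $2^{-\ell}$, Lipschitz bound $M\le c\brac{\log(m-\ell+1)}^{1/100}$, conclusion $\lesssim\brac{\log(m-\ell+1)}^{-1/100}$), your two-line argument closes the proof. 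This is a genuinely different, and arguably cleaner, route than the one the paper gestures at: the paper's one-sentence justification asks the reader to go \emph{through the proof} of Tao's Proposition~1.20 and observe that the quantitative two-projection machinery still applies to $E_n$ because $\proj_\om(K_n)=\proj_\om(E_n)$ for every $\om$, i.e.\ it re-runs Tao's argument with $E_n$ in place of $K_n$. Your monotonicity shortcut sidesteps any re-examination of that proof and uses only the \emph{statement} of Tao's proposition. The one thing you correctly flag as needing verification is whether Tao states Proposition~1.20 for $K_n$ itself (rather than for some auxiliary $1$-set); if so, your argument is complete. Note also that the projection identity $\proj_\om(K_n)=\proj_\om(E_n)$ is not wasted in the paper --- it reappears in Theorem~\ref{Rate curve Kn} to justify $\FavC(K_n)=\FavC(E_n)$ --- but it is unnecessary for bounding $R_{E_n}$, which your argument makes clear.
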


This result follows from the proof of \cite[Proposition 1.20]{Tao09} combined with the fact that $E_n$ is a $1$-set for which $\text{proj}_\om(K_n) = \text{proj}_\om(E_n)$ for any $\om \in \mathbb{S}^1$.

Now we use the previous proposition in combination with Theorem~\ref{QCBT} to produce an upper bound for the Favard curve length of $K_n$.
We use the notation $\log_*$ to denote the inverse tower function defined by
$$\log_* x = \min \set{m \ge 0 : \log^{(m)}x \le 1}.$$

\begin{thm}[Rate of decay for $K_n$, cf. Proposition 1.21 in \cite{Tao09}]
\label{Rate curve Kn}
If $n \gg 1$ and $\mathcal{C}$ is a piecewise $C^1$ curve of finite length with a piecewise bilipschitz continuous unit tangent vector, then
$$\FavC(K_n) \lesssim \pr{\log_* n}^{-1/100}.$$
\end{thm}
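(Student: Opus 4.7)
The plan is to apply Theorem~\ref{QCBT} to the $1$-set $E_n = \partial K_n$ equipped with a carefully chosen sequence of scales, and then to descend from the conclusion about $E_n$ to one about $K_n$. The descent is a geometric comparison: since $\mathcal{C}$ is piecewise $C^1$ with finite positive total length, it has at least one connected component of some positive diameter $d > 0$. Once $n$ is large enough that $\sqrt{2}\cdot 4^{-n} < d$, no translate of such a component can sit inside a single square component of $K_n$ (each square has diameter $\sqrt{2}\cdot 4^{-n}$ and distinct squares of $K_n$ are separated by gaps of width $\ge 4^{-n}$). By connectedness, any translate $p+\mathcal{C}$ that meets $K_n$ must then also meet $\partial K_n = E_n$; equivalently $K_n - \mathcal{C} \subseteq E_n - \mathcal{C}$, whence $\FavC(K_n) \le \FavC(E_n)$.

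The core task is then to apply Theorem~\ref{QCBT} to $E_n$ with $L = \mathcal{H}^1(E_n) \lesssim 1$ (the bound is uniform in $n$ since $E_n$ is a disjoint union of $4^n$ axis-parallel squares, each of perimeter $4\cdot 4^{-n}$). We will choose $N \asymp \log_* n$, large enough that $N^{1/100}\ge 3$, and build scales of the form $r_k^- = r_k^+ = 2^{-a_k}$ for strictly increasing integers $0 \le a_1 < a_2 < \cdots < a_N$. Separation of scales reduces to $a_{k+1} \ge a_k + 1$, and the uniform length bound $\mathcal{H}^1_{r_k^-, r_k^+}(E_n) \lesssim 1$ follows at once from covering each square perimeter by balls of radius $2^{-a_k}$, as long as $a_k\le 2n$. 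The crux is the near unrectifiability condition, which via Proposition~\ref{rCEsts} applied at the triple $(m,\ell,M) = (a_{k+2}, a_k, 2^{a_k})$ translates into the joint requirement
\begin{equation*}
\log(a_{k+2} - a_k + 1) \gtrsim N \quad \text{and} \quad 2^{a_k} \lesssim [\log(a_{k+2}-a_k+1)]^{1/100}.
\end{equation*}
The first inequality demands $a_{k+2} - a_k \gtrsim e^{cN}$, while the second, once $a_k$ has grown past a constant, forces the much stronger $a_{k+2} - a_k \gtrsim \exp(2^{100 a_k})$. The odd- and even-indexed subsequences of $\{a_k\}$ must therefore each grow tower-exponentially, so $a_N$ becomes a tower of height $\asymp N$. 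Proposition~\ref{rCEsts} also requires $a_N \le n$; calibrating $N \asymp \log_* n$ exactly balances these demands. Theorem~\ref{QCBT} then yields $\FavC(E_n) \lesssim N^{-1/100} L \asymp (\log_* n)^{-1/100}$, and the first paragraph finishes the proof.

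The principal obstacle will be the simultaneous bookkeeping in the recursion defining $\{a_k\}$: the constraint on $M = 2^{a_k}$ from Proposition~\ref{rCEsts} must be reconciled with the growing lower bound on the gap $a_{k+2}-a_k$, and at the end with the global ceiling $a_N \le n$. The unavoidable tower structure that emerges is precisely what caps $N$ at $\asymp \log_* n$ and produces the stated rate; any quantitative improvement in the dependence of Proposition~\ref{rCEsts} on $M$ would translate directly into a faster rate of decay for $\FavC(K_n)$. A minor side issue is the possible disconnectedness of $\mathcal{C}$ in the reduction step, which is easily handled because a piecewise $C^1$ curve of finite positive length has only finitely many connected components and at least one of positive diameter.
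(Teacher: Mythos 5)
Your proposal is correct and follows essentially the same route as the paper: reduce to $E_n=\del K_n$, take dyadic scales $r_k^\pm=2^{-a_k}$ whose exponents are forced by Proposition~\ref{rCEsts} to grow tower-exponentially (the paper's explicit recursion is $m_{j+1}=\lceil 2^{C_2 m_j^{100}}\rceil$), calibrate $N\asymp \log_* n$ against the ceiling $a_N\le n$, and apply Theorem~\ref{QCBT} with $L\lesssim 1$. The only difference is cosmetic: you supply a connectedness argument for $\FavC(K_n)\le\FavC(E_n)$, whereas the paper simply asserts $\FavC(K_n)=\FavC(E_n)$.
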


Our proof follows \cite[Proposition 1.21]{Tao09}, but we include the details here for completeness.

\begin{proof}
As above, we define $E_n = \del K_n$ and note that $E_n$ is a $1$-set with $\FavC(K_n) = \FavC(E_n)$.
Therefore, it suffices to prove that $\FavC(E_n) \lesssim \pr{\log_* n}^{-1/100}$.

Let $N = \log_* n/C_1 \in \N$ for some sufficiently large constant $C_1$ that will be specified below.
That is, $n = e^{e^{e^{\cdot^{\cdot^{\cdot^{e}}}}}}$, where the tower contains $C_1 N$ elements.
Then we define an increasing sequence $\set{m_j}_{j=1}^N \su \N$ recursively by choosing $\log N \lesssim m_1 \lesssim N$, then setting $m_{j+1} = \lceil2^{C_2 m_j^{100}}\rceil$, where $C_2 = \pr{100 \log 2}^{-1}$.
The starting point $m_1$ is chosen so that $m_{j+1} - m_j \ge N$ for all $j = 1, \ldots, N-1$.
Taking a closer look:
\begin{align*}
m_2 &\approx 2^{C_2 m_1^{100}} = \exp\pr{\tilde C \exp\pr{M}} \\
m_3 &\approx \exp\pr{\tilde C m_2^{100}} \approx \exp\pr{\tilde C  \exp\pr{\exp\pr{M}}}  \\
m_4 &\approx \exp\pr{\tilde C m_3^{100}} \approx \exp\pr{\tilde C \exp\pr{\exp\pr{\exp\pr{M}}} } \\
&\vdots
\end{align*}
where $M = 100 \log m_1$ and $\tilde C =1/100$.
Since $\log_* M \le \log_*(100 \log(CN)) \le \log_*(100 \log(\frac C{C_1} \log_* n))$, then the constant $C_1$ is chosen so that $m_N \le n$.
Then set $r_{j}^\pm = 2^{- m_j}$ and note that $r_{N}^- \ge 2^{-n}$.

Before we apply Theorem~\ref{QCBT}, we check that our sequence of scales satisfies the set of conditions outlined in that theorem.

Observe that for any $2^{-n} \le r \le 1$, it holds that $\mathcal{H}_{r,r}^1(E_n) \lesssim 1$.
This shows that our sequence of scales satisfies the uniform length bound.

For any $1 \le j \le N -1$, 
$$r_{j+1}^+ = 2^{- m_{j+1}} = 2^{- m_{j}} 2^{- \pr{m_{j+1} - m_{j}}} \le 2^{- m_{j}} 2^{- N} \le \frac 1 2  2^{- m_{j}} = \frac 1 2 r_{j}^-,$$
so we also have separation of scales whenever $n$ is large enough so that $N \ge 1$.

For any $1 \le j \le N-2$, 
\begin{align*}
R_{E_n}\pr{r_{j+2}^+, r_{j}^-, \frac 1 {r_{j}^-}}
&= R_{E_n}(2^{- m_{j+2}}, 2^{- m_{j}}, 2^{m_{j}}).
\end{align*}
To apply Proposition~\ref{rCEsts}, we need to check that $1 \le 2^{m_{j}} \le c \brac{\log \pr{m_{j+2} - m_{j} +1}}^{1/100}$.
Since
\begin{align*}
\log(m_{j+2} - m_{j} +1)
&\approx \log \pr{e^{\tilde C m_{j+1}^{100}} - m_{j} +1}
\ge \frac{\tilde C}{2} m_{j+1}^{100},
\end{align*}
then we need $2^{m_{j}} \le c \pr{\frac{ 1}{200}}^{1/100} m_{j+1}$.
As $m_{j+1} \approx 2^{C_2 m_j^{100}}$, the hypothesis holds and we conclude from Proposition~\ref{rCEsts} that 
\begin{align*}
R_{E_n}(2^{- m_{j+2}}, 2^{- m_{j}}, 2^{m_{j}})
&\lesssim \brac{\log\pr{m_{j+2} - m_{j} +1}}^{-1/100}.
\end{align*}
Since $\log(m_{j+2} - m_{j} +1) \ge \frac 1 {200} m_{j+1}^{100} \gg m_{j+1} \ge N \gtrsim \log_* n$, then
\begin{align*}
R_{E_n}(2^{- m_{j+2}}, 2^{- m_{j}}, 2^{m_{j}})
&\lesssim \pr{\log_* n}^{-1/100}
\end{align*}
and the near unrectifiability condition also holds.

Therefore, the sequence of scales satisfies the set of conditions outlined in Theorem~\ref{QCBT}.
An application of Theorem~\ref{QCBT} with $L =1$ then shows that $\FavC(E_n) \lesssim \pr{\log_* n}^{-1/100}$, as required.
\end{proof}

In \cite{CDT21}, we use different techniques to prove a much faster rate of decay for the Favard curve length of the four-corner Cantor set.  This and other rates-type results are discussed in the next subsection.

\subsection{Rates in the literature}

There has been substantial interest in finding upper and lower bounds for the rate of decay of the Favard length of self-similar $1$-sets, such as the four-corner Cantor set. 
It remains an open problem to obtain sharp asymptotic estimates for these rates.  

Theorem~\ref{Rate curve Kn}, as well as the faster decay rates obtained in \cite{CDT21}, shed some light on the upper bound problem in the nonlinear setting. 
To put these results into context, we present the best known upper bounds in the linear setting for the four-corner Cantor set.  
As above, we use $K_n$ to denote the $n^{\text{th}}$ generation of the four-corner Cantor set as defined in \eqref{KnDefn}. 

\begin{thm}[Navarov, Peres and Volberg \cite{NPV10}]
\label{NPV}
For each $p<1/6$, there exists a constant $c>0$ so that for every $n \in \N$, $\disp \Fav(K_n) \le c n^{-p}.$
\end{thm}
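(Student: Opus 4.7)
The plan is to follow the Fourier-analytic approach of Nazarov--Peres--Volberg, exploiting the self-similar structure of $K_n$. First I would introduce the natural probability measure $\mu_n$ uniformly distributed on $K_n$ and denote by $f_n^\omega$ the density of its one-dimensional projection $(\proj_\omega)_\ast\mu_n$. Since $K_n$ is compact and $f_n^\omega \ge 0$, one has $|\proj_\omega(K_n)| = |\mathrm{supp}(f_n^\omega)|$, so the task is to show that for most directions $\omega$ the support of $f_n^\omega$ is very thin --- equivalently, the $4^n$ projected squares overlap heavily.

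The central Fourier input is the product formula arising from the self-similar convolution $\mu_n = \mu_0 \ast S_{1/4\,\ast}\mu_0 \ast \cdots \ast S_{1/4^{n-1}\,\ast}\mu_0$, where $\mu_0$ is the uniform measure on the four corners of $[0,1]^2$ and $S_a$ denotes scaling by $a$. This gives
\begin{equation*}
\hat\mu_n(\xi) = \prod_{k=0}^{n-1}\hat\mu_0(4^{-k}\xi),
\end{equation*}
and via Plancherel one can compute $\|f_n^\omega\|_2^2$ as a one-dimensional integral of this Riesz-type product along the ray in direction $\omega$. Averaging in $\omega$ and using that for generic directions the dyadic dilations $4^k\omega \pmod{2\pi}$ equidistribute, one obtains an $L^2$ energy bound of the form $\int_0^\pi \|f_n^\omega\|_2^2 \, d\omega \lesssim n$, which encodes the average overlap structure.

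The main conceptual step is converting this $L^2$ information into an \emph{upper} bound on $|\proj_\omega(K_n)|$. Naive Cauchy--Schwarz only gives $|\proj_\omega(K_n)| \ge 1/\|f_n^\omega\|_2^2$, which recovers the classical Mattila lower bound $\Fav(K_n)\gtrsim 1/n$ but cannot by itself produce an upper bound. Instead I would analyze the integer-valued multiplicity function $F_n^\omega(t) = \#\{Q \subset K_n : t \in \proj_\omega(Q)\}$ together with its dyadic level sets $U_k^\omega = \{t : 2^k \le F_n^\omega(t) < 2^{k+1}\}$. Using the explicit product structure of $\hat\mu_n$, one establishes a refined higher-order Fourier decay estimate showing that for most $\omega$, almost all of the integrated mass $\int F_n^\omega\,dt \asymp 1$ is carried by levels $2^k \gtrsim n^{1/6}$. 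This forces $|\proj_\omega(K_n)| \lesssim n^{-1/6+\eps}$ outside a set of directions of measure $\lesssim n^{-1/6+\eps}$; splitting the Favard integral over these two sets (bounding $|\proj_\omega(K_n)|\le \sqrt{2}$ trivially on the exceptional set) then yields $\Fav(K_n) \le c_p\, n^{-p}$ for every $p<1/6$.

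The main obstacle is this sharp-exponent analysis. The $L^2$ energy bound alone is insufficient; the exponent $1/6$ emerges from a delicate balance between how quickly the Riesz product $\prod_k |\hat\mu_0(4^{-k}r\omega)|^2$ decays on average and how correlated the factors at different scales $k$ can be for a fixed $\omega$. Making this precise requires a careful combinatorial accounting of pairwise projection overlaps $|\proj_\omega(Q)\cap \proj_\omega(Q')|$ for squares $Q,Q'\subset K_n$, combined with a number-theoretic analysis of base-$4$ equidistribution of rotations. Optimizing these two ingredients against each other --- essentially a Hölder interpolation tuned to the four-corner arithmetic --- is what selects the rate $1/6$, and it is the technical heart of the NPV argument.
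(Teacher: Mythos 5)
This theorem is not proved in the paper at all: it is quoted verbatim from Nazarov--Peres--Volberg \cite{NPV10} to put the results of \cite{CDT21} and Theorem~\ref{Rate curve Kn} in context, so there is no internal proof to compare your sketch against; the only fair comparison is with the NPV argument itself. Measured against that, your outline gets the architecture right --- the self-similar convolution structure of $\mu_n$, the Riesz-product formula for $\hat\mu_n$, the observation that Plancherel plus Cauchy--Schwarz only yields the lower bound $\abs{\proj_\omega(K_n)} \ge \norm{f_n^\omega}_2^{-2}$ and hence $\Fav(K_n)\gtrsim 1/n$, and the correct conversion mechanism: if for most $\omega$ almost all of the mass $\int F_n^\omega \approx 1$ sits on levels $F_n^\omega \gtrsim K$, then Chebyshev gives $\abs{\supp F_n^\omega} \lesssim \int_{\{1\le F_n^\omega < K\}} F_n^\omega + K^{-1}$, and splitting the $\omega$-integral over a small exceptional set finishes the estimate.

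The genuine gap is that the single step you label ``a refined higher-order Fourier decay estimate showing that \ldots almost all of the integrated mass is carried by levels $2^k \gtrsim n^{1/6}$'' \emph{is} the theorem: everything before it is soft, and everything after it is a two-line Chebyshev/splitting argument. You give no mechanism for bounding the low-multiplicity mass $\int_{\{F_n^\omega \le K\}} F_n^\omega$ for most directions, and the $L^2$ average $\int \norm{f_n^\omega}_2^2\,d\omega \lesssim n$ (which, incidentally, follows from a direct pairwise computation $\int \abs{\proj_\omega Q \cap \proj_\omega Q'}\,d\omega \sim 4^{-2n}/(\dist(Q,Q')+4^{-n})$, not from any equidistribution of $4^k\omega$) cannot supply it, since large $L^2$ norm is compatible with large support. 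In \cite{NPV10} this step is carried out by showing that substantial low-multiplicity mass forces $\abs{\hat f_n^\omega}^2$ to be large on intermediate frequency ranges, and then using the product structure of $\hat\mu_n$ together with a careful analysis of the zero set of the factor $\hat\mu_0$ restricted to lines to show this can only happen for a set of directions of small measure; the exponent $1/6$ comes from balancing these quantitative inputs. Your appeal to ``base-$4$ equidistribution of rotations'' and an unspecified ``H\"older interpolation'' does not substitute for that analysis, so as written the proposal is a correct reading of the strategy but not a proof.
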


Several additional works have investigated analogous upper bounds for the rate of decay of other sets: the $1$-dimensional Sierpinski gasket in \cite{BoV10}, more general $1$-dimensional irregular self-similar sets in \cite{BoV12}, product Cantor sets \cite{BLV14}, \cite{LZ10}, and random Cantor sets in \cite{Z18}. 
A common thread through each of these results (with the exception of \cite{Z18}), as well as the result of Theorem~\ref{NPV}, is the analysis of $L^p$-norms of the projection multiplicity functions.
The projection multiplicity functions count the number of components at a certain scale that orthogonally project onto a given point.
A nice survey of this area and the techniques employed can be found in \cite{Laba12}, see also \cite{BoV12}.   

The best known lower bounds for the Favard length of the four-corner Cantor set are as follows.

\begin{thm}[Bateman and Volberg \cite{BaV10}]\label{BV}
There exists a constant $c > 0$ so that for every $n \in \N$, $\disp \Fav(K_n) \ge c n^{-1} \log{n}.$
\end{thm}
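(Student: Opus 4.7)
The plan is the classical $L^2$ projection-multiplicity method, refined by a combinatorial saving at the final step. For each direction $\omega \in [0,\pi)$, write $K_n = \bigcup_{j=1}^{4^n} Q_j$ as a union of closed cubes of side $4^{-n}$ and define the multiplicity function
\begin{equation*}
f_\omega(t) = \sum_{j=1}^{4^n} \chi_{\proj_\omega(Q_j)}(t), \qquad t \in \R.
\end{equation*}
Then $\supp f_\omega = \proj_\omega(K_n)$ and $\int f_\omega = 4^n\,|\proj_\omega(Q_1)| \gtrsim 1$ uniformly for $\omega$ bounded away from the axis directions. Cauchy-Schwarz yields $1 \lesssim |\proj_\omega(K_n)|\cdot\|f_\omega\|_2^2$, and integrating in $\omega$ followed by a reverse Cauchy-Schwarz gives
\begin{equation*}
\Fav(K_n) \gtrsim \frac{1}{\int_0^{\pi} \|f_\omega\|_2^2\, d\omega}.
\end{equation*}
The theorem therefore reduces to proving $\int_0^{\pi} \|f_\omega\|_2^2\, d\omega \lesssim n/\log n$.

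Expanding the $L^2$-norm as $\|f_\omega\|_2^2 = \sum_{j,k} |\proj_\omega(Q_j) \cap \proj_\omega(Q_k)|$, I would organize the pairs $(Q_j,Q_k)$ by the scale $4^{-m}$ at which their ancestors in the Cantor tree split: there are $\sim 16^{n}\cdot 4^{-m}$ such pairs, and each contributes $\sim 16^{-n}\cdot 4^{m}$ after the $\omega$-integration, since the projections overlap only on an arc of directions of measure $\sim 4^{m-n}$ with pointwise overlap $\lesssim 4^{-n}$. The scale-$m$ contribution is thus $O(1)$, and summing over $m=1,\dots,n$ reproduces Mattila's classical bound $\int_0^\pi \|f_\omega\|_2^2\, d\omega \lesssim n$, equivalently the weaker estimate $\Fav(K_n) \gtrsim 1/n$. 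The extra logarithmic factor is extracted by observing that, for a given $\omega$, only few of the $n$ scales can simultaneously realize the worst-case overlap: an appreciable scale-$m$ overlap forces $\tan\omega$ to admit a rational approximation with denominator of size $\sim 4^{m}$, and by a Dirichlet/Borel-Cantelli counting only $\sim n/\log n$ scales can align in this way for a typical direction. Making this rigorous via a Fourier decomposition of the natural measure $\mu$ on the product $K_n = C\times C$, using $\widehat{(\proj_\omega)_*\mu}(\xi) = \widehat{\mu_C}(\xi\cos\omega)\,\widehat{\mu_C}(\xi\sin\omega)$ together with Kahane-type decay estimates on $\widehat{\mu_C}$, yields the required pointwise estimate, which integrates to the $n/\log n$ bound.

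The principal obstacle is this last Diophantine/almost-orthogonality step, where the saving is precisely logarithmic: the naive pair count is already tight up to a factor of $\log n$, so the argument must exploit the arithmetic structure of $K_n = C\times C$ (especially the common contraction ratio $1/4$) in an essential way. This is the heart of the Bateman-Volberg tile-counting argument and is the reason the same method does not apply to self-similar sets without arithmetic coherence between scales. A secondary technical point I would handle at the outset is the treatment of directions near $0$ and $\pi/2$, where the normalization $\int f_\omega \sim 1$ degenerates; this can be restored by partitioning $[0,\pi)$ into sectors well-separated from the axis directions and summing.
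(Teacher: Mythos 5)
The paper does not prove this statement; it is quoted directly from Bateman--Volberg \cite{BaV10}, so your argument has to stand on its own, and it does not. The reduction you start from, $\Fav(K_n) \gtrsim \big(\int_0^\pi \|f_\omega\|_2^2\, d\omega\big)^{-1}$, is the standard second-moment bound, but the estimate you then set out to prove, $\int_0^\pi \|f_\omega\|_2^2\, d\omega \lesssim n/\log n$, is false: the $\omega$-averaged second moment is comparable to $n$ from \emph{both} sides. Your own pair count already shows each splitting scale $m$ contributes $O(1)$, and it also contributes $\Omega(1)$: there are $\sim 4^{2n-m}$ ordered pairs of level-$n$ squares whose common ancestor is at level $m$, for a constant fraction of such pairs the set of directions in which the two projections meet has measure comparable to $4^{m-n}$ (the perpendicular to the connecting vector always lies in $[0,\pi)$, and restricting to a sector away from the axes does not change this by more than a constant), and on a constant fraction of that angular set the overlap has length comparable to $4^{-n}$; multiplying gives a contribution $\gtrsim 1$ per scale, hence $\int_0^\pi \|f_\omega\|_2^2\, d\omega \gtrsim n$. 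Consequently no Diophantine/Borel--Cantelli refinement of the $\omega$-integration, and no Fourier decay input (which is in any case unavailable pointwise for the ratio-$1/4$ Cantor measure), can deliver the $n/\log n$ bound; what you have called the ``heart'' of the argument is a provably unattainable target, and the Cauchy--Schwarz route saturates exactly at Mattila's $\Fav(K_n) \gtrsim n^{-1}$.

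The logarithmic gain in Bateman--Volberg comes precisely from refusing to collapse the multiplicity function to its $\omega$-averaged $L^2$ norm. Roughly, one keeps the pointwise bound $\abs{\proj_\omega(K_n)} \gtrsim \|f_\omega\|_1^2/\|f_\omega\|_2^2$ and proves a distributional (level-set) statement: by a combinatorial count of interacting pairs of squares that exploits the self-similar structure across scales, the directions cannot mostly have second moment of the top order $n$; a definite proportion $\gtrsim K/n$ of directions has $\|f_\omega\|_2^2 \lesssim K$ for each dyadic $K \le n$, and integrating $1/\|f_\omega\|_2^2$ over these level sets produces the harmonic sum that yields $c\, n^{-1}\log n$. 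This pairs-of-squares interaction analysis is the ingredient your outline is missing (and is the same mechanism the present paper adapts, in its curved setting, to prove Theorem~\ref{CDTlower} in \cite{CDT21}); if you want to pursue a correct proof, that is where the work has to go, not into improving the second-moment estimate.
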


Additional lower bound results apply to $s$-sets, those sets $A\subset \R^2$ for which $\mathcal{H}^s(A) \in (0, \iny)$.
In \cite{Mat90}, Mattila attains lower bounds on the Favard length of neighborhoods of arbitrary $s$-sets when $s\le 1$. 
His technique involves defining a measure on the projection space, then using a pushforward to relate the energy of this measure to the original set. 
See also \cite{Bon19} for related results.

In a joint work with Cladek \cite{CDT21}, the authors of this paper obtain upper and lower bounds on the rate of decay of the Favard curve length of the $n^{\text{th}}$ generation in the construction of the four-corner Cantor set.  
The upper bound in \cite{CDT21} is in line with the upper bound for the classic problem that appeared in \cite{NPV10}.

\begin{thm}[Cladek, Davey, Taylor \cite{CDT21}]
\label{CDTupper}
Let $\mathcal{C}$ be a piecewise $C^1$ curve of finite length with a piecewise bilipschitz continuous unit tangent vector.
For each $p<1/6$, there exists a constant $c>0$ so that for every $n \in \N$, $\disp \FavC(K_n) \le c n^{-p}$.
\end{thm}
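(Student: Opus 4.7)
The proof of Theorem~\ref{CDTupper} must proceed by methods substantially different from those yielding Theorem~\ref{Rate curve Kn}, since the multiscale approach of Theorem~\ref{QCBT} can deliver at best iterated-logarithmic rates. The natural strategy is to adapt the $L^p$-technique of Nazarov--Peres--Volberg that underlies the linear Theorem~\ref{NPV} to the nonlinear curve-projection setting.

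For each $n$, I would introduce a curve multiplicity function $f_n : \R^2 \to \N$ defined by
$$
f_n(\al,\be) = \#\set{Q \in \mathcal{Q}_n : Q \cap \pr{(\al,\be) + \mathcal{C}} \ne \emptyset},
$$
where $\mathcal{Q}_n$ denotes the collection of $4^n$ closed dyadic squares of side $4^{-n}$ whose union is $K_n$. By Section~\ref{Buffon}, $\FavC(K_n) \simeq \abs{K_n - \mathcal{C}} = \abs{\set{f_n > 0}}$, and since $\abs{Q - \mathcal{C}} \simeq \abs{\mathcal{C}} \cdot 4^{-n}$ for each $Q \in \mathcal{Q}_n$, summing over the $4^n$ squares yields $\norm{f_n}_{L^1} \simeq \abs{\mathcal{C}}$, a constant independent of $n$. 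The plan is then to bound $\abs{\set{f_n > 0}}$ from above by splitting it into a high-multiplicity portion, controlled by Chebyshev via $\abs{\set{f_n \ge \lambda}} \le \norm{f_n}_{L^1}/\lambda$, and a low-multiplicity portion $\abs{\set{0 < f_n < \lambda}}$, the latter requiring an intrinsic analysis of how curve translates interact with the self-similar Cantor structure.

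The principal technical step is bounding the low-multiplicity portion. In the linear case \cite{NPV10}, one exploits the fact that most line translates meeting $K_n$ must meet many of its squares simultaneously at many scales. The bilipschitz-tangent hypothesis on $\mathcal{C}$ is the key nonlinear input: it permits a scale-adapted decomposition of $\mathcal{C}$ into finitely many sub-arcs, each bilipschitz equivalent to a line segment in some direction $\om_k$. On each such sub-arc, and at any Cantor scale $4^{-k}$ much finer than the local curvature scale, the intersection of a curve translate with the squares of $\mathcal{Q}_k$ is, up to a controlled thickening, comparable to the intersection of the chord line in direction $\om_k$ with the same squares. This reduction would allow the NPV Riesz-product-style counting to be transferred from the linear pair statistics of $K_n$ along each direction $\om_k$ to the nonlinear pair statistics under curve translates, delivering the bound with the same exponent $p < 1/6$.

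The main obstacle is the accumulated control of the curvature-induced errors across all $n$ Cantor scales. A uniform chord approximation on the whole of $\mathcal{C}$ loses too much at coarse scales, whereas a pointwise tangent-line approximation loses too much at fine scales. The remedy is to couple the arclength of each sub-arc to the Cantor scale $4^{-k}$ at which the multiplicity count is being executed; the bilipschitz quantification of the unit tangent then guarantees that the chord deviation is of order $4^{-2k}$, so that the resulting geometric series of errors is absolutely summable over $k$ and does not degrade the NPV exponent. Verifying this telescoping, and ensuring that the thickening errors do not overwhelm the transversality gain that drives the linear estimate, is the most delicate point of the argument.
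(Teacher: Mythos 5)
Note first that this paper does not actually prove Theorem~\ref{CDTupper}; it cites the result from \cite{CDT21} and offers only a one-sentence sketch: \textit{the proof relies on a one-to-one correspondence between the family of linear projections and the curve projections on sufficiently small components of $K_n$.} Your proposal and this indicated strategy share the same key geometric insight, that the bilipschitz hypothesis on the tangent makes curve projections behave like linear projections at fine scales, but they deploy it very differently, and the difference matters.

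Your plan is to re-run the Nazarov--Peres--Volberg Riesz-product argument intrinsically, replacing the linear multiplicity function by a curve multiplicity function $f_n$ and tracking chord-line approximation errors across all $n$ Cantor scales. You correctly flag the resulting difficulty: the chord deviation at scale $4^{-k}$ is $O(4^{-2k})$, and ensuring that these errors do not contaminate the delicate Fourier-analytic counting at every scale $k = 1, \ldots, n$ is a genuine obstacle, not a routine telescoping. The curve multiplicity function is also not obviously amenable to the self-similar product structure that makes the NPV trigonometric-polynomial factorization possible in the first place. So while your outline identifies the right ingredient, the specific plan of re-opening the NPV machine carries a gap that your own sketch acknowledges but does not close.

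The approach the paper points to is a \emph{reduction}, not a re-derivation, and it avoids the cross-scale accumulation entirely. Decompose $K_n$ into the $4^m$ squares of an intermediate generation $K_m$, with $m$ chosen slightly larger than $n/2$ so that $4^{-2m} \ll 4^{-n}$. Each such square $Q$ has side $4^{-m}$, and $Q \cap K_n$ is a rescaled copy of $K_{n-m}$. Over a single $Q$, any translate of $\mathcal{C}$ deviates from its chord line by $O(4^{-2m}) = o(4^{-n})$, so the curve projection $\Phi_\al$ restricted to $Q$ is, up to a negligible thickening, the orthogonal projection in the normal direction $\om(\al)$ of the relevant arc of $\mathcal{C}$. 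The bilipschitz condition on $\vp'$ makes $\al \mapsto \om(\al)$ a bijection with bounded Jacobian onto a subarc of $\mathbb{S}^1$; this is exactly where the hypothesis earns its keep, and it yields
\begin{equation*}
\int_A \abs{\Phi_\al\pr{Q \cap K_n}}\, d\al \lesssim \int_{\mathbb{S}^1} \abs{\proj_\om\pr{Q \cap K_n}}\, d\om = 4^{-m}\,\Fav(K_{n-m}).
\end{equation*}
By subadditivity of $\abs{\Phi_\al(\cdot)}$ over the decomposition of $K_n$ into the $4^m$ squares, summing and invoking Theorem~\ref{NPV} as a black box gives $\FavC(K_n) \lesssim \Fav(K_{n-m}) \lesssim (n-m)^{-p} \simeq n^{-p}$. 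There is only one approximation step, at one scale, and the deep NPV counting is never reopened. I would recommend recasting your argument along these lines rather than attempting to carry curvature errors through the multi-scale counting.
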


The proof of Theorem~\ref{CDTupper} relies on a one-to-one correspondence between the family of linear projections and the curve projections on sufficiently small components of $K_n$. 
Along with Cladek, we also establish the following lower bound. 

\begin{thm}[Cladek, Davey, Taylor \cite{CDT21}]
\label{CDTlower}
Let $\mathcal{C}$ be a piecewise $C^1$ curve of finite length with a piecewise bilipschitz continuous unit tangent vector.
There exists a constant $c > 0$ so that for every $n \in \N$, $\FavC(K_n) \ge c n^{-1}$.
\end{thm}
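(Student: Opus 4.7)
The plan is to imitate the classical Cauchy--Schwarz proof of $\Fav(K_n) \gtrsim 1/n$, with a curve-adapted dwell-time function replacing the linear projection multiplicity. Since $\FavC$ is monotone in $\mathcal{C}$, by restricting to a single piece we may parameterize $\mathcal{C}$ by arclength $\gamma : [0,L] \to \R^2$ with $|\gamma'(s_1)-\gamma'(s_2)| \asymp |s_1-s_2|$. Define
\[
M(p) := |\{s \in [0,L] : p + \gamma(s) \in K_n\}|,
\]
so $\{M>0\} = K_n - \mathcal{C}$, and Fubini gives $\int M \, dp = L \cdot |K_n| = L \cdot 4^{-n}$. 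Cauchy--Schwarz then yields
\[
\FavC(K_n) = |\{M>0\}| \geq \frac{(\int M)^2}{\int M^2} = \frac{L^2 \cdot 16^{-n}}{\int M^2}.
\]

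Expanding the square,
\[
\int M^2 = \iint_{[0,L]^2} g(\gamma(s_1)-\gamma(s_2)) \, ds_1 \, ds_2, \qquad g(v) := |K_n \cap (K_n+v)|.
\]
I would change variables $(s_1,s_2) \mapsto v = \gamma(s_1)-\gamma(s_2)$. With $\omega$ the tangent angle, the Jacobian determinant is $\sin(\omega(s_1)-\omega(s_2))$, which by the bilipschitz tangent hypothesis satisfies $|\sin(\omega(s_1)-\omega(s_2))| \asymp |s_1-s_2| \asymp |v|$; the same hypothesis also keeps the multiplicity of the chord map bounded. After separating off the near-diagonal $|s_1-s_2| \leq 4^{-n}$, where the trivial bound $g \leq \|g\|_\infty = 4^{-n}$ applies, this reduces to
\[
\int M^2 \lesssim L \cdot 4^{-2n} + \int_{\R^2} \frac{g(v)}{|v|} \, dv = L \cdot 4^{-2n} + I_1(\chi_{K_n}),
\]
where $I_1$ denotes the $1$-Riesz energy of $\chi_{K_n}$.

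Writing $\chi_{K_n} = \sum_i \chi_{Q_i}$ over the $4^n$ squares $Q_i$ of side $4^{-n}$ with centers $q_i$, the off-diagonal part of $I_1$ is
\[
\sum_{i \neq j} \iint_{Q_i \times Q_j} \frac{dx \, dy}{|x-y|} \asymp 16^{-2n} \sum_{i \neq j} \frac{1}{|q_i-q_j|} \asymp 16^{-2n} \cdot n \cdot 4^{2n} = n \cdot 4^{-2n},
\]
where the count $\sum_{i \neq j} |q_i-q_j|^{-1} \asymp n \cdot 4^{2n}$ follows from the layered estimate $\#\{j : |q_j - q_i| \asymp 4^{-k}\} \sim 4^{n-k}$ for $0 \leq k \leq n$---the logarithmic factor $n$ reflecting the $1$-Frostman regularity of $K_n$ down to scale $4^{-n}$. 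Substituting yields $\FavC(K_n) \gtrsim L^2/n \gtrsim 1/n$.

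The principal obstacle is the change of variables in the $L^2$ step: the Jacobian degenerates on the diagonal $s_1 = s_2$, and it is precisely the bilipschitz tangent hypothesis that produces both the weight $1/|v|$ and the uniform multiplicity bound for the chord map. If $\mathcal{C}$ contains a straight segment, the chord map degenerates to a $1$-dimensional image and this $L^2$ analysis breaks down---consistent with the direct computation $\FavC(K_n) \asymp 2^{-n}$ when $\mathcal{C}$ is a horizontal segment, and explaining why the hypothesis is required in the theorem.
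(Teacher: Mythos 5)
Note first that the paper does not actually prove this statement: Theorem~\ref{CDTlower} is imported from \cite{CDT21}, and the only description given here of its proof is that it studies interactions between pairs of squares in the spirit of \cite{BaV10}. So your proposal has to be judged on its own terms. As a strategy it is sound and, in fact, not far from the quoted description: your off-diagonal energy sum over $i \neq j$ \emph{is} a pairwise square-interaction estimate, organized through Cauchy--Schwarz rather than through the finer Bateman--Volberg machinery (which is what one would need to push the bound toward $n^{-1}\log n$, the improvement the authors say they expect). The skeleton checks out: $\set{M>0}\subseteq K_n-\mathcal{C}$ (containment, not equality, but that is the direction you need, since $\FavC(K_n)\simeq\abs{K_n-\mathcal{C}}$), $\int M = L\,4^{-n}$, the near-diagonal contribution $\lesssim L\,4^{-2n}$, and the Riesz-energy count $I_1(\chi_{K_n})\asymp n\,4^{-2n}$ are all correct, and your closing remark about straight segments correctly identifies why the bilipschitz-tangent hypothesis is needed.

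The one step you assert rather than prove is the crux in the curved setting: the chord map $(s_1,s_2)\mapsto v=\gamma(s_1)-\gamma(s_2)$ has Jacobian $\abs{\sin\pr{\omega(s_1)-\omega(s_2)}}$, and you need both $\abs{\sin\pr{\omega(s_1)-\omega(s_2)}}\asymp\abs{v}$ and a uniform multiplicity bound. Both are true under the hypotheses, but they require an argument. For the Jacobian, subdivide the piece further so that its total turning is at most $\pi/2$; then the bilipschitz tangent gives $\abs{\omega(s_1)-\omega(s_2)}\asymp\abs{s_1-s_2}$ with the sine comparable to its argument, and the chord-arc property gives $\abs{s_1-s_2}\asymp\abs{v}$. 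For the multiplicity, use that each piece is a graph of a function $\vp$ with $\vp'$ strictly monotone: for chords of a fixed slope $m$ one has $\vp'(t_1)<m<\vp'(t_2)$, and implicit differentiation of $\vp(t_2)-\vp(t_1)=m(t_2-t_1)$ shows that $t_2-t_1$ (hence the chord length) is strictly monotone in $t_1$, so at most one unordered pair of parameters realizes a given vector $v$; thus the multiplicity is at most $2$. With that lemma in place, your co-area bound of the off-diagonal integral by $\int g(v)\abs{v}^{-1}\,dv$ is legitimate and the proof closes, yielding a constant depending on $\mathcal{C}$ (through $L$ and $\la$), which is all the theorem asserts.
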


The proof of Theorem~\ref{CDTlower} involves studying interactions between pairs of squares, much in the spirit of the techniques introduced in \cite{BaV10} that are used to prove Theorem~\ref{BV} above.  
In the curved setting, the argument becomes much more complex.
We expect that further investigations in the curved setting will yield an improved lower bound on the order of $n^{-1} \log{n}$.

In \cite{BoV11}, Bond and Volberg estimate from below the probability that a circle of radius $r$ will intersect the $n^{\text{th}}$ generation in the construction of the four-corner Cantor set. 
Their lower bound is of the form $n^{-1} \log{n}$.  
However, in their setting, the radius $r>0$ grows with the generation $n$.  

In collaboration with Bongers \cite{BonTay20}, the second-listed author introduces a technique for producing lower bounds on the rate of decay of the Favard curve length in a much more general setting that applies to arbitrary $s$-sets for $s\le 1$.  
This work extends the results of Mattila in \cite{Mat90} by replacing orthogonal projection maps with more general families of projection operators.

\subsection{Proof Approach}

Here we describe the big ideas that are used to prove Theorem~\ref{QCBT}.  
We draw inspiration from Tao's \cite[Theorem 1.13]{Tao09} as well as the original proofs of the Besicovitch projection theorem, which can be found in \cite[Theorem 6.13]{Fal85} and \cite[Theorem 18.1]{Mat95}.  

A key property of purely $1$-unrectifiable sets that is used in the proof the original qualitative Besicovitch projection theorem (see \cite[Theorem 6.13]{Fal85} and \cite[Theorem 18.1]{Mat95}) is that such sets have ``tangents almost nowhere''.
This means that almost every point in the set is approached in almost every direction by other points in the set.  
This idea is formalized by introducing double-sectors about the points in $E$ and investigating the size of the intersection of $E$ with such sets.
Introducing a curved variant of these double-sectors is critical to our analysis.
 
In the proof of \cite[Theorem 1.13]{Tao09}, the first step is to divide the set $E \times \mathbb{S}^1$ into normal  and non-normal pairs.
Roughly speaking, a pair $\pr{e, \om} \in E \times \mathbb{S}^1$ is called \textit{normal} if there is a bulk of points of $E$ in a small neighborhood of $e$ which concentrate along the direction that is normal to $\om$. 
As an example, consider when the part of $E$ in a neighborhood of $e$ is entirely contained in the line through $e$ that is orthogonal to $\om$.
In this setting, the orthogonal projection of the neighborhood of $e$ in the direction $\om$ gives only a singleton.
The idea is that for a normal pair $\pr{e, \om}$, its neighborhood should have a suitably small projection onto the direction $\om$.
A pair is called non-normal if it is not normal.  

Since we are considering curve projections, we need to adapt the notion of normal pairs to our setting.
We call a pair $\pr{e, \al} \in E \times \R$ a \textit{curve pair} if the bulk of $E$ near $e$ concentrates along the curve centered along $x = \al$ that passes through $e$. 
The formal definition of the curve double-sector is given in \eqref{circSectors} and the definition of normal pairs (which we call curve pairs) is provided by Definition~\ref{curve_pairs_defn}. 
In practice, our curve pairs are defined in an analogous way to Tao's normal pairs, where we take $\om = \om\pr{\al} \in \mathbb{S}^1$ to be the normal direction at the point $e$ to the curve centered at $\pr{\al, \Phi_\al(e)}$.

Once the notion of a normal pair has been introduced in \cite{Tao09}, each pair $\pr{e, \om} \in E \times \mathbb{S}^1$ is either normal or non-normal.
To treat the non-normal points, an exceptional set of low density points is first removed.
Using a Vitali-type argument, it is shown that the exceptional set has small measure.
A technical argument shows that the remaining points are Lipschitz in nature, and the assumed bound on the rectifiability constant is then used to estimate the measure of these remaining points.  
Since each projection is a contraction, an upper bound on the measure of the non-normal pairs immediately yields an upper bound on the Favard length.
The general argument for our curve projections, which appears in Section~\ref{goodPoints}, follows this idea while introducing a series of technical modifications.
In fact, this part of our article contains many new ideas that significantly distinguish it from the corresponding parts of \cite{Tao09}.
If we are trying to compare these arguments to their qualitative counterparts, the non-normal pairs reflect the nature of Mattila's $A_{1,\de}$ sets \cite[Chapter 18]{Mat95}. 
Falconer \cite[Chapter 6]{Fal85} shows that almost every point in $E$ is a point of radiation, so the non-normal pairs correspond to the points that are not points of radiation, or the directions that are not condensation directions.
In the qualitative setting, the set of all of these pairs has measure zero.
%

Now we describe the approach to the normal pairs.
First, high multiplicity lines (defined at each scale) are introduced.
These sets can be thought of as quantitative versions of Mattila's $A_3$ sets or Falconer's condensation directions of the first kind.
In \cite{Tao09}, a ``sliding" pigeonhole principle (see Lemma~\ref{pHole}) is used to select a single scale around which the high multiplicity lines have a sufficiently small measure.   
Then the neighborhood of the underlying set is analyzed using a Fubini-type argument.
Next, the lines that are not of high multiplicity but are also not of zero multiplicity are considered.
A counting argument combined with the pigeonhole principle is used to select the next scale in such a way that the resulting set has a sufficiently small measure.  
Points that lie in high density strips (defined at each scale) are then analyzed.  
An application of the Hardy-Littlewood maximal inequality, which can be viewed as a quantitative version of the Lebesgue differentiation theorem, shows that these sets also have small Favard length.
Again, the sliding pigeonhole principle is used to choose a third and final scale around which this analysis is carried out.  
To finish the argument, the remaining normal pairs are analyzed.
These normal pairs can be compared to Mattila's $A_{2,\de}$ sets or Falconer's condensation directions of the second kind.
The main observation here is that these remaining normal pairs are concentrated around a special, fine scale set, and another application of the Hardy-Littlewood maximal inequality completes the argument.

For our curve pairs, the approach is very similar to Tao's.
Instead of sets of high-multiplicity lines, positive-multiplicity lines, and high-density strips, we consider sets of high-multiplicity curves, positive-multiplicity curves, and high-density curve strips.
Although our proof roughly follows Tao's, the nonlinear nature of our projections introduces a number of technical hurdles that don't appear in the linear setting.
We also chose an exposition that is quite different from the one in \cite{Tao09}. 
Our first step is to completely decompose the set $E$, and then we analyze each of the components.
For a visual representation of the decomposition, see Figure \ref{trees}. 
A more detailed explanation of the ideas and notions discussed above, as well as a rigorous presentation of the selection of scales, is given in Section~\ref{decomp}.

\subsection{Organization of the paper}

The remainder of this article is organized as follows.
In the next section, Section~\ref{setup_section}, we make some simplifying assumptions about our curves and reintroduce the curve projections in a more basic form.
We then define our curve double-sectors and introduce the measures that will be used.
Section~\ref{decomp} describes how we decompose the set into subsets that can be analyzed as described above.
This section illuminates our use of multiscale analysis.
In Section~\ref{goodPoints}, we analyze the non-curve elements.
The key observation here is that most of these points cluster around a Lipschitz curve, so by the near unrectifiability assumption, they must have small measure.
Section~\ref{FavCirc} contains the analysis of the selected neighborhoods of the high-multiplicity curve set and the high-density curve strip set.
This section contains a Fubini-type argument and an application of the Hardy-Littlewood maximal inequality.
In Section~\ref{FAnalysis}, another application of the Hardy-Littlewood maximal inequality is used to show that the remaining curve pairs cluster around a fine scale set, and consequently have a small measure.
Our observations are combined in Section~\ref{conclusion} where we complete the proof.
Some technical details have been collected in the Appendix \ref{apx}.

\subsection*{Acknowledgements.}
This material is based upon work supported by the National Security Agency under Grant No. H98230-19-1-0119, The Lyda Hill Foundation, The McGovern Foundation, and Microsoft Research, while the authors were in residence at the Mathematical Sciences Research Institute in Berkeley, California, during the summer of 2019.

\smallskip

\section{Preparation}
\label{setup_section}

Before we decompose the set $E$, we first make a number of simplifying assumptions about the curve that we are working with.
These simplifications allow us to describe the curve projection as a real-valued function.
Then we define the curve double-sectors that will be used in the decomposition.
Next, we introduce the relevant measures and collect some observations about their relationships.
In the subsequent section, these tools are used to decompose the set.

\subsection{Simplifying the curve}

Let the curve $\mathcal{C}$ be as given.
That is, $\mathcal{C}$ is a piecewise $C^1$ curve of finite length with a piecewise bilipschitz continuous unit tangent vector.
Then we can write $\disp \mathcal{C} = \bigsqcup_{i=1}^N \mathcal{C}_i$, where each $\mathcal{C}_i$ is a $C^1$ graph with a strictly monotonic bilipschitz continuous derivative over some orthonormal basis.
In other words, for each $i$, $\mathcal{C}_i = \set{t \om_1^i + \vp_i(t) \om_2^i : t \in I_i}$, where $\vp_i$ is $C^1$, $\vp_i'$ is $\la_i$-bilipschitz (and therefore strictly monotonic), $I_i$ is a finite interval, and $\pr{\om_1^i, \om_2^i}$ is a pair of orthonormal vectors.
Since $\disp \FavC(E) := \sum_{i=1}^N \Fav_{\mathcal{C}_i}(E)$, then we make the simplifying assumption that $\mathcal{C}$ itself is such a graph.
That is,
\begin{equation}
\label{simpleC}  
\mathcal{C} = \set{\pr{t, \vp(t)} : t \in I},  
\end{equation}
where $I$ is a closed and bounded interval, $\vp$ is $C^1$, and $\vp'$ is $\la$-bilipschitz so that for any $s, t \in I$,
\begin{equation}
\label{biLipCondition}
\la^{-1} \abs{s - t} \le \abs{\vp'(s) - \vp'(t)} \le \la \abs{s - t}.
\end{equation}
In fact, since $\vp'$ is continuous on a compact set, then it is bounded.
Moreover, since $\vp'$ is bilipschitz continuous, then $\vp'$ is strictly monotonic and $\vp''$ exists a.e., so that $\la \ge \abs{\vp''} \ge \la^{-1} > 0$ a.e. in $I$.

We will assume that $\abs{\vp'(t)} \le 1 - \de$, where $\de > 0$ is defined in \eqref{deltaDefn}, for all $t \in I$ since there is no loss in doing so.
Observe then that by the mean value theorem, there exists an $h \in I$ between $t$ and $s$ such that
\begin{equation}
\label{LipCondition}
\abs{\vp(s) - \vp(t)} = \abs{\vp'(h)(s-t)} < \abs{s-t}.
\end{equation}
In particular, $\vp$ is $1$-Lipschitz.

\begin{rem}
\label{lambda assumption}
It is clear that $\la \ge 1$.
We will assume throughout the proof that $\la \le 2^{35}$.
Our techniques can handle larger values of $\la$, but we would need to adjust our choices of constants, indices, etc.
\end{rem}

\subsection{The projection map}

The projection map $\Phi_\al$ associated to $\mathcal{C}$ is defined in \eqref{PaProj}.
However, since the curve is given by a graph, we may now define $\Phi_\al$ explicitly.
For $p = (p_1, p_2)$, the projection is either a singleton or the empty set:
\begin{equation}
\label{PhialDefn}
\Phi_\al\pr{p} = \left\{ \begin{array}{ll} \set{p_2 - \vp(p_1 - \al)} & p_1 - \al \in I \\ \emptyset & \textrm{ otherwise} \end{array}\right..
\end{equation}
Moreover, 
$$\Phi_\al^{-1}(\be) = (\al, \be) + \mathcal{C} = \set{\pr{\al + t, \be + \vp(t)} : t \in I}.$$

Fix a compact set $E \su \brac{0,1}^2$ and let $A = \brac{0, 1} - I$.
Since $I$ is assumed to be bounded, then so too is $A$.
Define the $3$-dimensional set of pairs associated to non-empty projections as 
\begin{equation}
\label{ESetDef}
\mathcal{E} = \set{(e, \al) \in E \times A : \Phi_\al(e) \ne \emptyset} = \set{(e_1, e_2, e_1 - t) : e = (e_1, e_2) \in E, t \in I} .
\end{equation}
For $\al \in A$, the map 
$$\Phi_\al : E_\al :=\set{e \in E: \pr{e, \al} \in \mathcal{E}} \to \R$$ 
is well-defined by identifying each singleton set with its element.
Observe that for any $\pr{e, \al} \in \mathcal{E}$, $\Phi_\al^{-1}(\Phi_\al(e)) = (\al, \Phi_\al(e)) + \mathcal{C}$, a non-empty curve that passes through $e$.

Recalling Definition~\ref{FavC}, if $S \su \brac{0, 1}^2$, then
\begin{equation}
\label{2dFavC}
\FavC(S)
= \int_{\R} \abs{\Phi_\al(S)} d\al
= \int_{A} \abs{\Phi_\al(S)} d\al.
\end{equation}
In particular, if $S \su E$, then \eqref{2dFavC} holds in place of the first formula provided in Definition~\ref{FavC}.
Examining the second formula provided by Definition~\ref{FavC}, if $\mathcal{S} \su \R^2 \times J$, for some interval $J \su \R$, then $S_\al \ne \emptyset$ iff $\al \in J$. 
Therefore,
\begin{equation}
\label{3dFavC}
\FavC(\mathcal{S})
= \int_{J} \abs{\Phi_\al(S_\al)} d\al.
\end{equation}
For example, if $\mathcal{S} \su \mathcal{E}$, then the above formula with $J = A$ replaces the one presented in Definition~\ref{FavC}.

\subsection{Extending the curve}
\label{extending_the_curve}

Note that if $e_1 - \al$ is near or at an endpoint of $I$, then $e$ will be near or at an endpoint of the curve $\Phi_\al^{-1}(\Phi_\al(e))$.
Since we will (for technical reasons) want to avoid being near the endpoint of curves, we introduce extensions of our curves as follows.

Set 
\begin{equation}
\label{deltaDefn}
\de = 10^{-5} + 2^{-100}
\end{equation} 
and let $I_+$ denote the $\de$-neighborhood of $I$.
That is, if $I = \brac{a, b}$, then $I_+ = \brac{a-\de, b+\de}$.
Define $\vp_+ : I_+ \to \R$ so that $\vp_+$ extends $\vp$ and maintains all of the properties of $\vp$ that we described above.
In particular, we set
\begin{equation*}
\vp_+(t) = \left\{\begin{array}{ll} 
\vp(a) + \vp'(a) (t - a) + \frac{\sgn\pr{\vp''}}{2\la} (t - a)^2 & a - \de \le t \le a \\
\vp(t) & a \le t \le b \\
\vp(b) + \vp'(b) (t - b) + \frac{\sgn\pr{\vp''}}{2\la} (t - b)^2 & b \le t \le b + \de
\end{array}\right.
\end{equation*}
so that $\vp(t) = \vp_+(t)$ for every $t \in I$, $\vp_+$ is $C^1$, $\abs{\vp_+'(t)}\le 1$ for all $t \in I_+$, $\vp_+$ is $1$-Lipschitz, and $\vp_+'$ is $\la$-bilipschitz. 
Let $\mathcal{C}_+$ denote the extended curve given by $\mathcal{C}_+ = \set{\pr{t, \vp_+(t)} : t \in I_+}$.

We now repeat the definitions from above for the extended curve.
For $p = (p_1, p_2)$, the extended projection is defined as
\begin{equation}
\label{Phial+Defn}
\Phi_{\al,+}(p) = \left\{ \begin{array}{ll} \set{p_2 - \vp_+(p_1 - \al)} & p_1 - \al \in I_+ \\ \emptyset & \textrm{ otherwise} \end{array}\right..
\end{equation}
Then 
$$\Phi_{\al,+}^{-1}(\be) = (\al, \be) + \mathcal{C}_+ = \set{(\al + t, \be + \vp_{+}(t)) : t \in I_+}.$$

With $A_+ = \brac{0, 1} - I_+$, set 
\begin{equation*}
\mathcal{E}_+ = \set{(e, \al) \in E \times A_+ : \Phi_{\al,+}(e) \ne \emptyset} = \set{(e_1, e_2, e_1 - t) : e = (e_1, e_2) \in E, t \in I_+} .
\end{equation*}
For $\pr{e, \al} \in \mathcal{E}_+$, we treat $\Phi_{\al}^+(e)$ as a real number by identifying each singleton set with its element.
For any $\pr{e, \al} \in \mathcal{E}_+$, $\Phi_{\al,+}^{-1}(\Phi_{\al,+}(e)) = (\al, \Phi_{\al,+}(e)) + \mathcal{C}_+$ is a non-empty curve that passes through $e$.
If $(e, \al) \in \mathcal{E}$, then $\Phi_{\al,+}^{-1}(\Phi_{\al,+}(e)) = \Phi_{\al,+}^{-1}(\Phi_{\al}(e))$ is a non-empty curve that passes through $e$ and extends beyond $e$ in both directions by at least $\de$ measured along the $x$-axis.

As these extended curves will be used extensively below, for any $\pr{e, \al} \in \mathcal{E}_+$, we define
\begin{equation}
\label{CealDef}
C_{e, \al} =
\Phi_{\al,+}^{-1}(\Phi_{\al,+}(e))
= (\al, \Phi_{\al,+}(e)) + \mathcal{C}_+
= \set{(\al + t, \Phi_{\al,+}(e) + \vp_+(t)) : t \in I_+}. 
\end{equation}

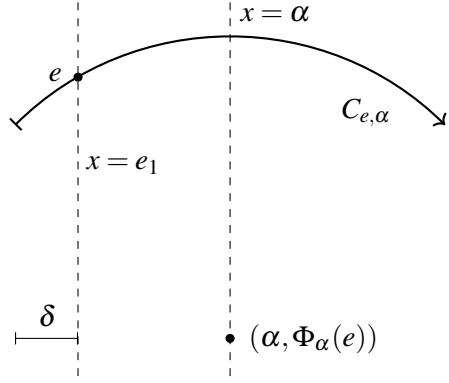
\begin{wrapfigure}{r}{0.5 \textwidth}
\begin{tikzpicture}
\draw[thick, <-|] (2.8284, 2.8284) arc (45:135:4cm);
\draw [fill=black] ( -2,3.4641) circle (1.5pt);
\draw[color=black] (-2.3,3.5) node {$e$};
\draw [fill=black] (0,0) circle (1.5pt);
\draw[color=black] (1.1,0) node {$\pr{\al, \Phi_\al(e)}$};
\draw[dashed] (0,-.5) -- (0, 4.5);
\draw[dashed] (-2,-.5) -- (-2, 4.5);
\draw[>= latex, |-|] (-2.8284,0) -- (-2,0);
\draw[color=black] (-2.4,0.3) node {$\de$};
\draw[color=black] (-1.4,2.3) node {$x = e_1$};
\draw[color=black] (.6,4.3) node {$x = \al$};
\draw[color=black] (1.8,3) node {$C_{e,\al}$};
\end{tikzpicture}
\centering
\caption{The image of $C_{e, \al}$ when $e_1 - \al$ is at the endpoint of $I$; $C_{e,\al}$ extends by $\de$ beyond $e$.}
\label{curvePic}
\end{wrapfigure}
\leavevmode
This is the extended curve centered at $(\al, \Phi_{\al,+}(e))$ that passes through $e$.
Often, we will only work with $(e, \al) \in \mathcal{E}$.
In this case, $\Phi_{\al}^+(e) = \Phi_\al(e)$ and then 
\vspace{3mm} \\
$\disp C_{e, \al} = \Phi_{\al,+}^{-1}(\Phi_{\al}(e)) = (\al, \Phi_{\al}(e)) + \mathcal{C}_+.$
\vspace{3mm} \\
By construction, if $(e, \al) \in \mathcal{E}$, then $e$ is never an endpoint of $C_{e, \al}$ and is always at least $\de$ (measured horizontally) from the end of the curve.
See Figure \ref{curvePic}.

\subsection{Curve double-sectors}
\label{curve_double_sectors}

Now we introduce the curve double-sectors.
These sets are constructed by looking at the curves $C_{e, \al'}$ in a neighborhood of $e$, where $\al'$ ranges over a small neighborhood of $\al$.
Given $\pr{e, \al} \in \mathcal{E}$, $r > 0$ and $M \ge \frac 1 \de$, set
\begin{equation}
\label{circSectors}
\mathcal{X}_{e, \al}\pr{r, M} = \set{z \in C_{e,\al'}  \su \R^2 : \abs{\al - \al'} \le \frac 1 M } \cap B_r\pr{e},
\end{equation}
where $C_{e, \al}$ is the curve defined in \eqref{CealDef} that passes through $e$.
The lower bound on $M$ ensures that $\pr{e, \al'} \in \mathcal{E}_+$ for all such $\al'$.
Indeed, if $\abs{\al - \al'} \le \frac 1 M\le \de$, then since $(e, \al) \in \mathcal{E}$ implies that $e-\al \in I$, it follows that $e_1 - \al' \in I_+$.
That is, every $C_{e,\al'}$ used to define this set is a well-defined non-empty curve.
See Figure \ref{Bowties} for a visualization of these sets.

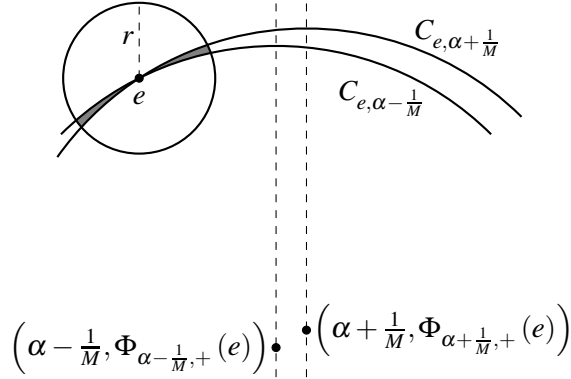
\begin{figure}[H]
\begin{tikzpicture}
\draw[thick] (3.0284, 2.9518) arc (45:145:4cm);
\draw[thick] (2.6284, 2.720386) arc (45:135:4cm);
\draw[thick] (-1, 3.4641) arc (0:360:1cm);
\draw [fill=black] ( -2,3.4641) circle (1.5pt);
\draw[color=black] (-2,3.2) node {$e$};
\draw[dashed] ( -2,3.4641) -- ( -2, 4.4641);
\draw[color=black] (-2.15, 4) node {$r$};
\fill[black, opacity = 0.5]
(-2,3.4641) arc (123.367: 109: 4cm) -- 
(-1.10262, 3.90536) arc (26.18: 19.56:1cm) -- 
(-1.05773,3.79894) arc (102.38: 116.74:4cm) -- 
   cycle;
\fill[black, opacity = 0.5]
(-2,3.4641) arc (123.367: 137.728: 4cm) -- 
(-2.75985, 2.814) arc (220.55: 213.925: 1cm) -- 
(-2.82977, 2.906) arc (131.105: 116.74: 4cm) -- 
   cycle;
\draw [fill=black] (0.2, 0.1234) circle (1.5pt);
\draw[color=black] (2, 0.1234) node {$\pr{\al + \frac 1 M, \Phi_{\al + \frac 1 M, +}\pr{e}}$};
\draw[color=black] (-2,-0.108) node {$\pr{\al - \frac 1 M, \Phi_{\al - \frac 1 M, +}\pr{e}}$};
\draw [fill=black] (-0.2,-0.108014) circle (1.5pt);
\draw[dashed] (0.2,-.5) -- (0.2, 4.5);
\draw[dashed] (-0.2,-.5) -- (-0.2, 4.5);
\draw[color=black] (1.2,3.2) node {$C_{e,\al - \frac 1 M}$};
\draw[color=black] (2.2,4) node {$C_{e,\al + \frac 1 M}$};
\end{tikzpicture}
\caption{$\mathcal{X}_{e, \al}\pr{r, M}$ is the shaded region bounded by $C_{e,\al - \frac 1 M}$, $C_{e,\al + \frac 1 M}$, and the boundary of $B_r\pr{e}$.}
\label{Bowties}
\end{figure}

Related to the curve double-sectors are straight double-sectors orthogonal to $\om \in \mathbb{S}^1$ given by
\begin{equation}
\label{bowties}
X_{e, \om}\pr{r, M} = \set{z \in \R^2 : \abs{\pr{z - e} \cdot \om }\le \frac 1 M  \abs{z - e}} \cap B_r(e).
\end{equation}

Before proceeding, it is important to check that the sets $\mathcal{X}_{e, \al}\pr{r, M}$ are not degenerate. 
In the next lemma, we establish that the curve double-sectors can be approximated by straight double-sectors with comparable amplitudes.
These relationships are illustrated in Figure \ref{Containment}.

\begin{lem}[Curve double-sectors are comparable to straight double-sectors]
\label{bowtiecontainment}
Let $\pr{e, \al} \in \mathcal{E}$ and set $\om = \frac{\pr{\vp'\pr{e_1 - \al}, -1}}{\sqrt{1 + \brac{\vp'\pr{e_1-\al}}^2}}$, the unit vector that is perpendicular to the tangent vector of $C_{e, \al}$ at $e$.
Assume that $r, M > 0$ are chosen so that $\de \ge \frac 1 M + r$, where $\de$ is as defined in \eqref{deltaDefn}.
Then
$\disp  \mathcal{X}_{e, \al}\pr{r, M} \su X_{e, \om}\pr{r, \frac{M}{\la\pr{1 + Mr}}}$.
If we further assume that $r, M > 0$ are chosen so that $r \le \frac{1}{2 \la^2 M}$, then
$\disp X_{e, \om}\pr{r, c_1 M} \su \mathcal{X}_{e, \al}\pr{r, M}$,
where $c_1 = \la \sqrt{8\brac{1 + \pr{1 + \frac{2\la}{M}}^2}}$.
\end{lem}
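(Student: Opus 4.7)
The plan is to pass between the curve double-sector and the straight double-sector through explicit parametric calculations, exploiting the graph representation $C_{e,\alpha'} = (\alpha', \Phi_{\alpha',+}(e)) + \mathcal{C}_+$ and converting containments into quantitative bounds on the inner product $(z-e)\cdot \omega$.

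For the first inclusion, take $z \in \mathcal{X}_{e,\alpha}(r,M)$ and pick the corresponding $\alpha'$ with $|\alpha-\alpha'| \le 1/M$. Parametrize $e = (\alpha'+s, \Phi_{\alpha',+}(e)+\varphi_+(s))$ with $s = e_1 - \alpha'$ and $z = (\alpha'+t, \Phi_{\alpha',+}(e)+\varphi_+(t))$ for some $t \in I_+$; the mean value theorem produces $\xi$ between $s$ and $t$ with $\varphi_+(t)-\varphi_+(s) = \varphi_+'(\xi)(t-s)$. A direct computation gives
\[ (z-e)\cdot(\varphi'(e_1-\alpha), -1) = [\varphi'(e_1-\alpha) - \varphi_+'(\xi)](t-s). \]
Since $|\varphi_+'| \le 1$ forces $|t-s| \le |z-e|$ and $|\xi - (e_1-\alpha)| \le |\alpha-\alpha'| + |t-s| \le \frac{1}{M}+r$, the upper bilipschitz half of the hypothesis on $\varphi_+'$ together with $\sqrt{1+[\varphi'(e_1-\alpha)]^2} \ge 1$ yields $|(z-e)\cdot\omega| \le \lambda\bigl(\tfrac{1}{M}+r\bigr)|z-e|$, which is the first claim.

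For the reverse inclusion, given $z \in X_{e,\omega}(r,c_1 M)$, finding $\alpha'$ with $z \in C_{e,\alpha'}$ is equivalent to locating a zero of
\[ g(\alpha') = \varphi_+(z_1-\alpha') - \varphi_+(e_1-\alpha') - (z_2 - e_2) \]
inside $[\alpha - 1/M, \alpha + 1/M]$. The derivative is $g'(\alpha') = \varphi_+'(e_1-\alpha') - \varphi_+'(z_1-\alpha')$, so the lower bilipschitz bound gives $|g'| \ge \lambda^{-1}|z_1-e_1|$ and strict monotonicity of $\varphi_+'$ forces $g$ to be strictly monotonic. One estimates $g(\alpha)$ by combining a second-order Taylor expansion of $\varphi_+$ at $e_1-\alpha$ (remainder bounded by $\tfrac{\lambda}{2}|z_1-e_1|^2$ using the upper Lipschitz bound on $\varphi_+'$) with the hypothesis on $\omega$ (which, since $|\varphi'|\le 1$, gives $|\varphi'(e_1-\alpha)(z_1-e_1)-(z_2-e_2)| \le \tfrac{\sqrt{2}}{c_1 M}|z-e|$), yielding
\[ |g(\alpha)| \le \tfrac{\lambda}{2}|z_1-e_1|^2 + \tfrac{\sqrt{2}}{c_1 M}|z-e|. \]
By the intermediate value theorem, it suffices to show $|g(\alpha)| \le \lambda^{-1}|z_1-e_1|/M$. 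The tangent direction at $e$ satisfies $\tau_1 \ge 1/\sqrt{2}$ because $|\varphi'| \le 1$, so the angular hypothesis $|(z-e)\cdot\omega|\le |z-e|/(c_1 M)$ combined with $z-e$ being nearly parallel to $\tau$ forces $|z_1-e_1| \gtrsim |z-e|$ once $c_1 M$ is large enough. Substituting this lower bound and invoking $r \le 1/(2\lambda^2 M)$ together with the explicit form of $c_1$ absorbs both error terms.

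The main obstacle is the second inclusion: the delicate bookkeeping required to show that $c_1$ exactly absorbs the quadratic Taylor error and the transversality error, while also producing the $|z_1-e_1|\gtrsim |z-e|$ estimate with constants consistent with the smallness condition $r \le 1/(2\lambda^2 M)$. The first inclusion, by contrast, is a one-line consequence of the mean value theorem once the right parametrization is chosen.
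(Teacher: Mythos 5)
Your proposal is correct. For the first inclusion your mean-value-theorem computation is essentially the paper's argument in compressed form: the paper writes the same quantity via a second-order Taylor expansion of $\vp_+$ about $e_1-\al$, splitting the error into a term of size $\frac{\la}{M}\abs{z_1-e_1}$ (from the shift $\al\to\al'$) and a term $\la\abs{z_1-e_1}^2$ (from curvature), which recombine into exactly your bound $\la\pr{\frac 1M+r}\abs{z-e}$. For the second inclusion, however, you take a genuinely different route. The paper argues by exclusion: it shows that points $z$ on the two extremal curves $C_{e,\al\pm 1/M}$ satisfy $\abs{(z-e)\cdot\om}\ge \frac{1}{\sqrt 8\,\la\ga M}\abs{z-e}$ with $\ga=\sqrt{1+(1+2\la/M)^2}$, so the boundary of $\mathcal{X}_{e,\al}(r,M)$ lies outside the open sector $X_{e,\om}(r,c_1M)$, and then concludes the containment via an implicit sweeping/connectedness step (the curves $C_{e,\al'}$ fill the region between the two extremal ones as $\al'$ varies). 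You instead argue constructively: for each $z$ in the straight sector you solve $g(\al')=\vp_+(z_1-\al')-\vp_+(e_1-\al')-(z_2-e_2)=0$ on $[\al-1/M,\al+1/M]$, using strict monotonicity of $\vp_+'$ to get a single-signed $g'$ with $\abs{g'}\ge\la^{-1}\abs{z_1-e_1}$, and the intermediate value theorem once $\abs{g(\al)}\le \la^{-1}\abs{z_1-e_1}/M$. The bookkeeping you flag does close: with $\abs{z_1-e_1}\ge\tfrac12\abs{z-e}$ (which follows from $\tau_1\ge 1/\sqrt 2$ and $c_1M\ge 4/\de$), the quadratic term is at most $\frac{\la r}{2}\abs{z_1-e_1}\le\frac{1}{4\la M}\abs{z_1-e_1}$ by $r\le\frac{1}{2\la^2M}$, and the transversality term is at most $\frac{2\sqrt 2}{c_1M}\abs{z_1-e_1}\le\frac{3}{4\la M}\abs{z_1-e_1}$ since $c_1\ge 4\la$. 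Your version has the advantage of making rigorous the step the paper leaves topological; the paper's version needs estimates only on the two boundary curves. (One small point to include in a write-up: the hypothesis $\de\ge\frac1M+r$ is what guarantees $z_1-\al'$ and $e_1-\al'$ stay in $I_+$, so that $g$ is defined on the whole interval.)
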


\begin{figure}[H]
\begin{tikzpicture}
\fill[black, opacity = 0.15]
(0, 4) -- (2.344, 4.868) -- 
(2.344, 4.868) arc (20.32: 4.27: 2.5cm) -- 
(2.49306, 4.18611) arc( 87.23: 101.31: 10.198 cm) -- 
   cycle;
\fill[black, opacity = 0.15]
(0, 4) -- (-2.344, 4.868) -- 
(-2.344, 4.868) arc (159.68: 175.73: 2.5cm) -- 
(-2.49306, 4.18611) arc( 92.77: 78.69: 10.198 cm) -- 
   cycle;
\fill[black, opacity = 0.15]
(0, 4) -- (2.344, 3.132) -- 
(2.344, 3.132) arc (-20.32: -18.35: 2.5cm) -- 
(2.37287, 3.21293) arc( 64.61: 78.69: 10.198 cm) -- 
   cycle;
\fill[black, opacity = 0.15]
(0, 4) -- (-2.344, 3.132) -- 
(-2.344, 3.132) arc (200.32: 198.35: 2.5cm) -- 
(-2.37287, 3.21293) arc(115.39: 101.31: 10.198 cm) -- 
   cycle;

\fill[black, opacity = 0.35]
(0, 4) -- (2.498, 4.0925) -- 
(2.498, 4.0925) arc (2.12: 4.27: 2.5cm) -- 
(2.49306, 4.18611) arc( 87.23: 101.31: 10.198 cm) -- 
   cycle;
\fill[black, opacity = 0.35]
(0, 4) -- (2.498, 3.90747) -- 
(2.498, 3.90747) arc (-2.12: -18.35: 2.5cm) -- 
(2.37287, 3.21293) arc(64.61: 78.69:  10.198 cm) -- 
   cycle;
\fill[black, opacity = 0.35]
(0, 4) -- (-2.498, 4.0925) -- 
(-2.498, 4.0925) arc (177.88: 175.73 : 2.5cm) -- 
(-2.49306, 4.18611) arc( 92.77: 78.69: 10.198 cm) -- 
   cycle;
\fill[black, opacity = 0.35]
(0, 4) -- (-2.498, 3.90747) -- 
(-2.498, 3.90747) arc(182.12: 198.35 : 2.5cm) --
(-2.37287, 3.21293) arc(115.39: 101.31: 10.198 cm) -- 
  cycle;

\fill[black, opacity = 0.8]
(0, 4) -- (2.498, 3.90747) -- 
(2.498, 3.90747) arc (-2.12: 2.12: 2.5cm) -- 
(2.498, 4.0925) -- (0,4) -- 
   cycle;
\fill[black, opacity = 0.8]
(0, 4) -- (-2.498, 3.90747) -- 
(-2.498, 3.90747) arc (182.12: 177.88: 2.5cm) -- 
(-2.498, 4.0925) -- (0,4) -- 
   cycle;

\draw[dashed, thick] (0, 4) circle (2.5cm);
\draw[thick] (-2.63, 3.0865) arc (117: 85: 10.198 cm);
\draw[thick] (2.63, 3.0865) arc (63: 95: 10.198 cm);
\draw[->] (0,4) -- ( 0, 3);
\draw[color=black] (0.2,3.5) node {$\om$};
\draw [fill=black] ( 0,4) circle (1.5pt);
\draw[color=black] (0,4.2) node {$e$};
\end{tikzpicture}
\centering
\caption{The region $\mathcal{X}_{e, \al}\pr{r, M}$ (shown with medium shading) is bounded by the black curves. 
The regions $X_{e, \om}\pr{r, \frac{M}{\la\pr{1 + Mr}}}$ (lightly shaded) and $X_{e, \om}\pr{r, c_1 M}$ (darkly shaded) contain and are contained in $\mathcal{X}_{e, \al}\pr{r, M}$, respectively.}
\label{Containment}
\end{figure}
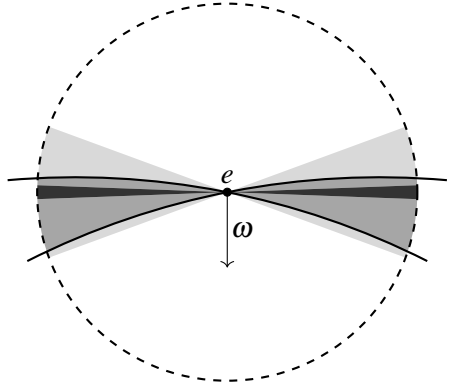

We note that the second containment of Lemma~\ref{bowtiecontainment} will be used in Section~\ref{goodPoints} while the first containment will be used to prove Corollary~\ref{bowtiestripcontainment} below, which will be invoked in Section~\ref{FAnalysis}. 

\begin{proof}
As explained above, since $\de \ge \frac 1 M$, then all of the curves used in the definition \eqref{circSectors} are non-empty and pass through $e$.
In fact, $e$ is always at least $\de - \frac 1 M \ge r$ (measured horizontally) from the end of all such $C_{e, \al'}$, so none of the curves used to define $\mathcal{X}_{e, \al}\pr{r, M}$ reach their endpoints before exiting the ball $B_r\pr{e}$.

Before showing the first claimed set inclusion, we use a Taylor expansion to produce a useful observation described by \eqref{curveExp}. 
We make use of the parametrization of $C_{e, \al'}$ given in \eqref{CealDef}.
Note that the parameter choice $t = e_1 - \al'$ corresponds to $e$ along $C_{e, \al'}$.
Thus, with $s = t + \al'$, a Taylor expansion of the function $f(s) = \Phi_{\al', +}(e) + \varphi_+(s - \al')$ about $s=e_1$ shows that for a.e. point $z = \pr{z_1, z_2}$ on the curve $C_{e,\al'}$ near $e$
\begin{equation}
\label{curveExp}
\begin{aligned}
z_2 &= e_2 + \vp_{+}'(e_1 - \al')(z_1 - e_1) + \vp_{+}''(t_0)(z_1-e_1)^2 \\
&= e_2 + \vp'(e_1 - \al)(z_1 - e_1) + \brac{ \vp_{+}'(e_1 - \al') - \vp_{+}'(e_1 - \al)}(z_1 - e_1) + \vp_{+}''(t_0)(z_1-e_1)^2,
\end{aligned}
\end{equation}
where $t_0$ is a number between $z_1+\al'$ and $e_1+\al'$.

Now we show that $\mathcal{X}_{e, \al}\pr{r, M} \su X_{e, \om}\pr{r, \frac{M}{\la\pr{1 + Mr}}}$.
If $z \in \mathcal{X}_{e, \al}\pr{r, M}$, then $z \in C_{e, \al'}$, where $\abs{\al - \al'} \le \frac 1 M$ and $\abs{z - e} \le r $.
It follows from the expansion of $z_2-e_2$ derived in \eqref{curveExp} combined with the fact that $\vp_{+}'$ is $\la$-Lipschitz and $\abs{\vp_{+}''} \le \la$ a.e., that for any such $z$ along $C_{e,\al'}$
\begin{align*}
\abs{(z - e) \cdot  \om}
&= \frac{\abs{(z - e) \cdot \pr{\vp'(e_1 - \al), -1} }}{\sqrt{1 + \abs{\vp'(e_1-\al)}^2}}
\le \abs{(z - e) \cdot \pr{\vp'(e_1 - \al), -1} } \\
&= \abs{\vp'(e_1 - \al) (z_1 - e_1) - (z_2 - e_2)} \\
&\le \abs{\vp_{+}'(e_1 - \al) - \vp_{+}'(e_1 - \al')} \abs{z_1 - e_1} + \abs{\vp''\pr{t_0}} \abs{z_1-e_1}^2 \\
&\le \frac \la M \abs{z_1 - e_1} + \la \abs{z_1 - e_1}^2
\le \la\pr{\frac 1 M + r} \abs{z - e}.
\end{align*}
That $\mathcal{X}_{e, \al}\pr{r, M} \su X_{e, \om}\pr{r, \frac{M}{\la\pr{1 + Mr}}}$ follows from this observation.

Going forward, we assume that $r \le \frac{1}{2 \la^2 M}$.
Since $\vp'_{+}$ is $\la$-Lipschitz, then $\abs{\vp_{+}''} \le \la$ a.e. 
Therefore, whenever $\abs{\al - \al'} \le \frac 1 M$ and $\abs{z_1 - e_1} \le \abs{z - e} \le r \le \frac{1}{2 \la^2 M} < \frac 1 {M}$, it follows from \eqref{curveExp} that
$$\abs{z_2 - e_2} \le \pr{\abs{\vp'(e_1 - \al)} + \frac{2\la}{M} } \abs{z_1 - e_1} \le \pr{1 + \frac{2\la}{M}}  \abs{z_1 - e_1},$$
where we have used the assumption that $|\varphi'| \le 1$.   
In particular, with $\ga = \sqrt{1 +\pr{1  + \frac{2\la}{M}}^2}$,
\begin{equation}
\label{horizontallowerBd}
\abs{z_1 - e_1} \ge \ga^{-1} \abs{z - e}.
\end{equation}

Finally, to show that $X_{e, \om}\pr{r, c_1 M} \su \mathcal{X}_{e, \al}\pr{r, M}$, we show that elements on the boundary curves of $\mathcal{X}_{e, \al}\pr{r, M}$ belong to the closure of the complement of $X_{e, \om}\pr{r, c_1 M}$.
That is, consider $z \in C_{e, \al'}$ where $\al' = \al \pm \frac 1 M$.
From \eqref{curveExp} combined with the fact that $\vp_{+}'$ is $\la$-bilipschitz and $\abs{\vp_{+}''} \le \la$ a.e., we see that
\begin{align*}
\abs{\pr{z - e}\cdot \pr{\vp'\pr{e_1 - \al}, -1} }
&= \abs{\brac{\vp_{+}'\pr{e_1 - \al \mp \frac 1 M} - \vp'(e_1 - \al)}(z_1 - e_1) + \vp_{+}''(t_0)(z_1-e_1)^2} \\
&\ge \abs{\vp_{+}'\pr{e_1 - \al \mp \frac 1 M} - \vp_{+}'(e_1 - \al)} \abs{z_1 - e_1} - \abs{\vp_{+}''(t_0)} \abs{z_1-e_1}^2 \\
&\ge \frac 1{\la M} \abs{z_1 - e_1} - \la \abs{z_1-e_1}^2
\ge \frac 1 {2 \la \ga M} \abs{z - e},
\end{align*}
where we have used the assumption that $r \le \frac 1 {2 \la^2 M}$ and the observation in \eqref{horizontallowerBd}.
Since $\sqrt{1 + \abs{\vp'(e_1-\al)}^2} \le \sqrt 2$, then
\begin{equation*}
\abs{\pr{z - e}\cdot \om}
= \abs{ \frac{\pr{z - e}\cdot \pr{\vp'\pr{e_1 - \al}, -1} }{\sqrt{1 + \abs{\vp'(e_1 - \al)}^2}}}
\ge  \frac 1 {2 \sqrt 2 \la \ga M} \abs{z - e},
\end{equation*}
 showing that $\mathcal{X}_{e, \al}\pr{r, M} \supset X_{e, \om}\pr{r, \sqrt 8 \la \ga  M}$, as required.
\end{proof}

Next we make an observation about the height of straight double-sectors.
This result will be combined with the previous one to prove the final estimate of the subsection.

\begin{lem}[Height of a straight double-sector]
\label{vertSlice}
Let $e \in \R^2$, $\om \in \mathbb{S}^1$ make an angle in $\brac{- \frac{3\pi}{4}, - \frac \pi 4}$ with the positive $x$-axis, $r > 0$, and $\mu \in \pr{0, \frac 1{\sqrt 2}}$.
Then any vertical slice of $X_{e, \om}\pr{r, \mu^{-1}}$ has length at most $\sqrt 8 \mu r$.
\end{lem}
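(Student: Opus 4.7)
The plan is to contain $X_{e,\om}(r, \mu^{-1})$ inside a strip of width $2\mu r$ perpendicular to $\om$, then use the angular constraint on $\om$ to bound the length of any vertical cross-section of this strip. The point is that once the strip inclusion is in hand, I can discard the ball constraint entirely and reduce the problem to an elementary linear estimate.

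For the first step, I would observe that any $z \in X_{e,\om}(r, \mu^{-1})$ satisfies both $\abs{(z - e)\cdot \om} \le \mu \abs{z - e}$ and $\abs{z - e} \le r$, and these combine to give $\abs{(z - e)\cdot \om} \le \mu r$. Thus $X_{e,\om}(r, \mu^{-1})$ is contained in the strip $S := \set{z \in \R^2 : \abs{(z-e)\cdot \om} \le \mu r}$ of total width $2\mu r$ centered on the line through $e$ perpendicular to $\om$.

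Next, writing $\om = (\om_1, \om_2)$ and $z = (x_0, y)$, I would express the vertical slice of $S$ at horizontal coordinate $x_0$ as the set of $y$ with $\abs{(x_0 - e_1)\om_1 + (y - e_2)\om_2} \le \mu r$. Provided $\om_2 \neq 0$, this is an interval in $y$ of length $2\mu r / \abs{\om_2}$. The assumed angle hypothesis on $\om$ forces $\abs{\om_2} = \abs{\sin\theta} \ge \sin(\pi/4) = 1/\sqrt{2}$, so every vertical slice of $S$ has length at most $2\sqrt 2 \mu r = \sqrt 8 \mu r$. Since each vertical slice of $X_{e,\om}(r, \mu^{-1})$ lies inside the corresponding slice of $S$, the conclusion follows.

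There is no substantive obstacle: the hypotheses on $\om$ and $\mu$ are tailored precisely to make the strip inclusion and the vertical-slice estimate fit together. The only conceptual subtlety worth flagging is that the ball constraint enters only once, to promote the scale-invariant inequality $\abs{(z-e)\cdot\om} \le \mu\abs{z-e}$ into the uniform strip bound $\abs{(z-e)\cdot\om} \le \mu r$; it is then unused, and in particular the bound $\sqrt 8 \mu r$ is independent of the specific direction of $\om$ within the prescribed angular sector.
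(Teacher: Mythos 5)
Your proof is correct, and it takes a genuinely different and more economical route than the paper's. You replace the double-sector by the strip $\set{z : \abs{\pr{z-e}\cdot\om} \le \mu r}$ (using the ball constraint once, exactly as you flag), and then the bound reduces to the elementary fact that a vertical line crosses a strip of width $2\mu r$ perpendicular to $\om$ in an interval of length $2\mu r/\abs{\om_2}$, with $\abs{\om_2}\ge 1/\sqrt 2$ forced by the angle hypothesis. The paper instead works inside the sector itself: after normalizing $e=0$ and reducing by symmetry to $\om$ at angle in $\brac{-\pi/2,-\pi/4}$, it parametrizes points $P_\be$ on the circular boundary, drops the vertical segment to the lower boundary ray, and maximizes the resulting chord length over $\be$, obtaining the sharper intermediate expression $2r\mu/\pr{\cos\te + \sin\te\,\mu/\sqrt{1-\mu^2}}$ before relaxing it to $\sqrt 8\,\mu r$. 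The paper's computation identifies the extremal chord and uses the hypothesis $\mu < \tfrac1{\sqrt2}$ (via $\be_0=\arcsin\mu<\pi/4$); your argument never needs that hypothesis and is shorter and more robust, at the cost of discarding the slightly finer trigonometric information, which is not needed anywhere downstream since Corollary~\ref{bowtiestripcontainment} only uses the final bound $\sqrt 8\,\mu r$. Since the slice of $X_{e,\om}\pr{r,\mu^{-1}}$ is a subset of the corresponding slice of the strip, its one-dimensional measure is bounded by the interval length, so the containment argument fully establishes the lemma.
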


\begin{proof}
There is no loss in assuming that $e = 0$.
By symmetry, we may further assume that $\om \in \mathbb{S}^1$ makes an angle in $\brac{- \frac{\pi}{2}, - \frac \pi 4}$ with the positive $x$-axis so that $\om^\perp \in \mathbb{S}^1$ makes an angle $\te \in \brac{0, \frac \pi 4}$ with the positive $x$-axis.

Define $\be_0 = \arcsin\pr{\mu} \in \pr{0, \frac \pi 4}$.
For any $\be \in \brac{- \be_0, \be_0}$, the point $P_\be = \pr{r \cos\pr{\te + \be}, r \sin\pr{\te + \be}}$ lies on the round boundary of $X_{0, \om}\pr{r, \mu^{-1}}$.
To determine the maximal vertical segment in $X_{0, \om}\pr{r, \mu^{-1}}$ originating at $P_\be$, we find the coordinates of the other endpoint, denoted by $Q_\be$. 
The point  $Q_\be$ has the same $x$-coordinate as $P_\be$ and lies on the line through the origin that makes an angle of $\theta - \be_0$ with the $x$-axis.  
That is, $Q_\be = \pr{\rho_\be \cos\pr{\te -\be_0}, \rho_\be \sin\pr{\te - \be_0}}$, where $\disp \rho_\be = r \frac{\cos\pr{\te + \be}}{\cos\pr{\te - \be_0}}$.
Then the vertical distance between $P_\be$ and $Q_\be$ is given by
\begin{align*}
v_\be 
&= r \sin\pr{\te + \be} - \rho_\be \sin\pr{\te - \be_0} 
= r\frac{\sin\pr{\be+ \be_0}}{\cos\pr{\te - \be_0}} .
\end{align*}
This distance is maximized when $\be = \be_0$, so that
\begin{align*}
v_\be 
\le \frac{r\sin\pr{2\be_0}}{\cos\pr{\te - \be_0}} 
= \frac{2r\sin\pr{\be_0} \cos(\be_0)}{\cos \te \cos \be_0 + \sin \te \sin \be_0} 
= \frac{2r\mu }{\cos \te + \sin \te \frac{\mu}{\sqrt{1 - \mu^2}}} 
&\le \sqrt 8 \mu r,
\end{align*}
as claimed.
\end{proof}

By combining the previous two results, we arrive at an important set containment result that will be used in Section~\ref{FAnalysis}.

\begin{cor}[Curve strips contain curve double-sectors]
\label{bowtiestripcontainment}
Let $\pr{e, \al} \in \mathcal{E}$.
Set $\om = \frac{\pr{\vp'\pr{e_1 - \al}, -1}}{\sqrt{1 + \brac{\vp'\pr{e_1-\al}}^2}}$, the unit vector that is perpendicular to the tangent vector of $C_{e, \al}$ at $e$.
Assume that $r, M > 0$ are chosen so that $\frac 1 M + r < \min\set{\frac 1 {\sqrt 2 \la}, \de}$.
Then with $J = \brac{\Phi_\al\pr{e} - \sqrt 8 \la\pr{\frac 1 M + r} r, \Phi_\al\pr{e} + \sqrt 8 \la\pr{\frac 1 M + r} r}$,
\begin{align*}
\mathcal{X}_{e, \al}\pr{r, M} \su \Phi_{\al,+}^{-1}\pr{J}.
\end{align*} 
\end{cor}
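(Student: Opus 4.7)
The plan is to deduce the corollary by composing the two preceding lemmas. Set $\mu = \la\pr{\tfrac 1 M + r}$ and observe that the hypothesis $\tfrac 1 M + r < \de$ is exactly the condition needed to invoke the first containment of Lemma~\ref{bowtiecontainment}, which yields
\begin{equation*}
\mathcal{X}_{e, \al}\pr{r, M} \su X_{e, \om}\pr{r, \mu^{-1}}.
\end{equation*}
The second hypothesis $\tfrac 1 M + r < \tfrac{1}{\sqrt 2 \la}$ rearranges to $\mu < \tfrac{1}{\sqrt 2}$, matching the amplitude constraint of Lemma~\ref{vertSlice}.

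Before applying Lemma~\ref{vertSlice}, I verify its angle hypothesis: since $\vp$ is $1$-Lipschitz by \eqref{LipCondition}, the tangent direction to $C_{e, \al}$ at $e$ is within angle $\pi/4$ of the horizontal, and so its perpendicular $\om$ makes angle in $\brac{-\tfrac{3\pi}{4}, -\tfrac{\pi}{4}}$ with the positive $x$-axis (up to sign, which the lemma handles by symmetry). Lemma~\ref{vertSlice} then guarantees that every vertical slice of $X_{e, \om}\pr{r, \mu^{-1}}$ has length at most $\sqrt 8 \mu r = \sqrt 8 \la\pr{\tfrac 1 M + r} r$.

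For the main step, fix $z \in \mathcal{X}_{e, \al}\pr{r, M}$ and introduce the reference point $w = \pr{z_1, \Phi_\al(e) + \vp_+(z_1 - \al)}$, which lies on the central curve $C_{e, \al}$ at the same $x$-coordinate as $z$. Using \eqref{PhialDefn}, \eqref{Phial+Defn}, and \eqref{CealDef}, a direct algebraic identity gives
\begin{equation*}
\Phi_{\al,+}(z) - \Phi_\al(e) = \brac{z_2 - \vp_+(z_1 - \al)} - \brac{e_2 - \vp(e_1 - \al)} = z_2 - w_2,
\end{equation*}
so it suffices to bound $\abs{z_2 - w_2}$ by $\sqrt 8 \la\pr{\tfrac 1 M + r} r$. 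Because $z$ sits in $X_{e, \om}\pr{r, \mu^{-1}}$ and $w$ lies on the curve tangent to the axis $\om^\perp$ at $e$, both points sit at $x$-coordinate $z_1$ in the same (mildly enlarged) vertical slice of the double-sector, and the slice-length bound closes the argument, yielding $\Phi_{\al,+}(z) \in J$, i.e., $z \in \Phi_{\al,+}^{-1}\pr{J}$.

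The main subtlety I expect is that $w$ automatically satisfies the angular condition (since $C_{e, \al}$ is tangent to $\om^\perp$ at $e$ and $\vp_+'$ is $\la$-Lipschitz, exactly the calculation at the start of the proof of Lemma~\ref{bowtiecontainment}) but may lie at distance up to $\sqrt 2 r$ from $e$ and so fall just outside $B_r(e)$. I plan to resolve this in one of two ways: (a) observe that the proof of Lemma~\ref{vertSlice} actually produces its height bound via the vertical distance between a point on the circle and a point on the bounding ray, which controls the full wedge slice and thus $\abs{z_2 - w_2}$ directly; or (b) bypass the wedge reduction entirely by applying the mean value theorem to $g(s) = \vp_+(s - \al') - \vp_+(s - \al)$, whose derivative satisfies $\abs{g'(s)} \le \la\abs{\al - \al'} \le \tfrac{\la}{M}$ via the Lipschitz property of $\vp_+'$. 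Route (b) gives the even stronger estimate $\abs{z_2 - w_2} = \abs{g(z_1) - g(e_1)} \le \tfrac{\la r}{M}$, which is comfortably smaller than the required $\sqrt 8 \la\pr{\tfrac 1 M + r} r$.
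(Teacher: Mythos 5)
Your main line of argument is exactly the paper's: apply the first containment of Lemma~\ref{bowtiecontainment} (valid since $\frac 1 M + r \le \de$), note $\mu = \la\pr{\frac 1 M + r}< \frac 1 {\sqrt 2}$ so Lemma~\ref{vertSlice} bounds vertical slices of $X_{e,\om}\pr{r,\mu^{-1}}$ by $\sqrt 8 \mu r$, and then convert this into the statement that $\abs{\Phi_{\al,+}(z)-\Phi_\al(e)}$, i.e.\ the vertical distance from $z$ to $C_{e,\al}$, is at most $\sqrt 8 \la\pr{\frac 1 M + r}r$, which the extended projection preserves. The subtlety you flag is real: the paper's proof asserts the vertical-distance bound in one line, and the reference point $w$ on $C_{e,\al}$ above $z_1$ need not lie in $B_r(e)$, so the slice bound does not apply to it verbatim; your identity $\Phi_{\al,+}(z)-\Phi_\al(e)=z_2-w_2$ makes precise what has to be estimated. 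Your route (b) is a genuinely different and cleaner resolution: writing $z_2-w_2=g(z_1)-g(e_1)$ with $g(s)=\vp_+(s-\al')-\vp_+(s-\al)$ and using the $\la$-Lipschitz bound on $\vp_+'$ gives $\abs{z_2-w_2}\le \frac{\la r}{M}$ directly (all arguments of $\vp_+$ stay in $I_+$ because $\frac 1 M + r<\de$), which is sharper than the stated radius of $J$ and bypasses Lemmas~\ref{bowtiecontainment} and~\ref{vertSlice} entirely; the only thing the lemma-based route buys is consistency with the geometric picture used elsewhere, while your MVT computation is self-contained and closes the small gap in the paper's wording.
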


\begin{proof}
Since $\de \ge \frac 1 M + r$, then Lemma~\ref{bowtiecontainment} shows that $\mathcal{X}_{e, \al}\pr{r, M} \su X_{e, \om}\pr{r, \frac{M}{\la\pr{1 + Mr}}}$.
Since $\om$ corresponds to an angle in $\brac{- \frac{3\pi}{4}, - \frac \pi 4}$ and $\frac{\la\pr{1 + Mr}}{M} \in \pr{0, \frac 1 {\sqrt 2}}$, then Lemma~\ref{vertSlice} shows that any vertical slice of $X_{e, \om}\pr{r, \frac{M}{\la\pr{1 + Mr}}}$ has length at most $\sqrt 8 \la\pr{\frac 1 M + r} r$.

Let $z \in \mathcal{X}_{e, \al}\pr{r, M}$, then $z \in X_{e, \om}\pr{r, \frac{M}{\la\pr{1 + Mr}}}$.
Since $C_{e, \al} = \Phi_{\al,+}^{-1}\pr{\Phi_\al\pr{e}}$ is the curve that passes through $\mathcal{X}_{e, \al}\pr{r, M}$, then because $z \in X_{e, \om}\pr{r, \frac{M}{\la\pr{1 + Mr}}}$, the the vertical distance between $z$ and $C_{e, \al}$ is at most $\sqrt 8 \la\pr{\frac 1 M + r} r$.
Since $\Phi_{\al,+}^{-1}$ doesn't change vertical distances, then $z \in \Phi_{\al,+}^{-1}\pr{J}$ and the conclusion follows.
\end{proof}

\subsection{The measures}
\label{measObs}

Here we introduce the measures that we will work with and collect some observations about their relationships to the curve projections.

Let $\mu$ be the $1$-dimensional Hausdorff measure $\mathcal{H}^1$ restricted to $E \su \brac{0,1}^2$.
In other words, $\mu$ is supported on $E$ and, under the assumptions of Theorem~\ref{QCBT}, $\mu(E) \le L$.
Let $\nu = \abs{\cdot \cap A} \abs{I}^{-1}$, the $1$-dimensional Lebesgue measure restricted to $A = \brac{0, 1} - I$, reweighted by dividing through by the measure of $I$.
If we let $\mu \times \nu$ denote the product measure on $E \times \R$, then 
\begin{equation}
\label{prodMeas}
\pr{\mu \times \nu} \pr{\mathcal{E}} \le L.
\end{equation}

In the next section, we break down $E$, or $\mathcal{E}$, into subsets that will be individually analyzed.
Before describing the decomposition, we make the following observation.

\begin{lem}[Projection is bounded by measure]
\label{smallProjProp}
For any $S \su E \su \R^2$ and any $\al \in \R$, $\abs{\Phi_\al\pr{S}} \lesssim \mu\pr{S}$, where the implicit constant depends on $\la$ and is independent of $\al$.
\end{lem}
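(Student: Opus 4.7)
The plan is to observe that $\Phi_\al$ is a Lipschitz map on its domain of definition and then apply the standard fact that a $1$-dimensional Hausdorff measure cannot be inflated by more than the Lipschitz constant of the map. This reduces the proposition to a single two-line computation, with the implicit constant even independent of $\la$ (the $\la$-dependence indicated in the statement is a conservative bookkeeping device).

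First, I would restrict attention to $S \cap \{p : p_1 - \al \in I\}$, since points of $S$ outside this set contribute $\emptyset$ to $\Phi_\al(S)$ and may be discarded without changing either side of the desired inequality. On this restricted domain, for any two points $p = (p_1,p_2)$ and $q = (q_1, q_2)$, the explicit formula \eqref{PhialDefn} combined with the triangle inequality and the $1$-Lipschitz property of $\vp$ recorded in \eqref{LipCondition} yields
\begin{align*}
\abs{\Phi_\al(p) - \Phi_\al(q)}
&\le \abs{p_2 - q_2} + \abs{\vp(p_1 - \al) - \vp(q_1 - \al)}\\
&\le \abs{p_2 - q_2} + \abs{p_1 - q_1}
\le \sqrt{2}\,\abs{p - q}.
\end{align*}
Thus $\Phi_\al$ is $\sqrt{2}$-Lipschitz on its domain, with constant independent of $\al$ (and in fact independent of $\la$ as well).

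Second, I would invoke the standard property that a Lipschitz map $f: X \to Y$ between metric spaces satisfies $\mathcal{H}^1(f(A)) \le \Lip(f)\cdot\mathcal{H}^1(A)$ for every $A \su X$, together with the coincidence of $\mathcal{H}^1$ with $1$-dimensional Lebesgue measure on $\R$. Applied to $\Phi_\al$ and the restricted set $S$, this gives
\begin{equation*}
\abs{\Phi_\al(S)} = \mathcal{H}^1\pr{\Phi_\al(S)} \le \sqrt{2}\,\mathcal{H}^1(S) = \sqrt{2}\,\mu(S),
\end{equation*}
which is the asserted bound.

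There is no real obstacle here: the proof is essentially tautological once one notices that the explicit formula \eqref{PhialDefn} exhibits $\Phi_\al$ as the composition of the Lipschitz function $(p_1,p_2) \mapsto p_2 - \vp(p_1 - \al)$ with inclusion. The only minor point to be mindful of is the domain of definition, which is handled by the initial restriction step.
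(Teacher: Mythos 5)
Your proof is correct, and it rests on the same underlying observation as the paper's: the explicit formula \eqref{PhialDefn} together with the $1$-Lipschitz bound \eqref{LipCondition} exhibits $\Phi_\al$ (on the set where it is nonempty) as a Lipschitz map with constant independent of $\al$. The difference is in the packaging. The paper never invokes the general fact $\mathcal{H}^1(f(A)) \le \Lip(f)\,\mathcal{H}^1(A)$; instead it runs the covering argument by hand, taking a near-optimal cover of $S$ by balls furnished by the definition of $\mu(S)$, checking that each ball maps onto an interval of length $\lesssim r(B)$, and summing (with a finite-subcollection/monotone-convergence step to handle the countable union). Your route is shorter and produces an explicit constant; the only points worth noting are (a) the paper's $\mathcal{H}^1$ is built from covers by balls, which dominates the usual arbitrary-cover Hausdorff measure, so the chain $\abs{\Phi_\al(S)} = \mathcal{H}^1(\Phi_\al(S)) \le \sqrt 2\,\mathcal{H}^1(S) \le \sqrt 2\,\mu(S)$ goes in the right direction, and (b) on $\R$ both definitions agree with Lebesgue outer measure, so measurability of $\Phi_\al(S)$ is not an issue (both proofs work at the level of outer measures). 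Your remark that the constant can be taken independent of $\la$ is also consistent with the paper's argument, which likewise uses only the normalization $\abs{\vp'} \le 1$.
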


The idea behind this observation is that the projection is Lipschitz in nature, so it cannot increase the measure of a set by too much.

\begin{proof}
By definition, there exists a countable collection $\mathcal{B} = \set{B_n}$ of balls that covers $S =S \cap E$ and satisfies $\sum r\pr{B_n} \lesssim \mu\pr{S}$.
By monotone convergence and the separability of $\R$, it suffices to show that whenever $\widetilde{\mathcal{B}}$ is a finite subcollection of $\mathcal{B}$, we have
$\disp \abs{\bigcup_{B \in \widetilde{\mathcal{B}}} \Phi_\al\pr{B}} \lesssim \mu\pr{S}$.

Choose $\al \in A$ and $B \in \widetilde{\mathcal{B}}$.
If $B \cap \set{ \pr{\al + I} \times \R} = \emptyset$, then $\Phi_\al\pr{B} = \emptyset$. 
If $B \su \set{ \pr{\al + I} \times \R}$, then since $\vp$ is $1$-Lipschitz, $\Phi_\al\pr{B} = J$ for some interval $J$ with $\abs{J} \lesssim r\pr{B}$.
Otherwise, $\Phi_\al\pr{B} = J$, where $J$ satisfies a size condition as before.
In all three cases, we see that $\abs{\Phi_\al\pr{B}} \lesssim r\pr{B}$.
It follows that
$$\abs{\bigcup_{B \in \widetilde{\mathcal{B}}} \Phi_\al\pr{B}} 
\le \sum_{B \in \widetilde{\mathcal{B}}} \abs{\Phi_\al\pr{B}} 
\lesssim \sum_{B \in \widetilde{\mathcal{B}}} r\pr{B} 
\le  \sum_{B_n \in \mathcal{B}}  r\pr{B_n} \lesssim \mu\pr{S},$$
as required.
\end{proof}

\begin{cor}[Favard length is bounded by measure]
\label{smallFavard}
For any $S \su E \su \R^2$, $\FavC\pr{S} \lesssim \mu\pr{S}$.
Similarly, for any $\mathcal{S}  \su \mathcal{E} \su \R^3$, $\FavC\pr{\mathcal{S}} \lesssim \pr{\mu \times \nu }\pr{\mathcal{S}}$.
\end{cor}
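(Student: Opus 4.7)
The plan is to obtain both statements as essentially immediate consequences of Lemma~\ref{smallProjProp}, integrating the pointwise bound on $|\Phi_\al(\cdot)|$ over the appropriate parameter range. The one ingredient beyond the preceding lemma is the observation that $A = [0,1] - I$ has finite Lebesgue measure (bounded in terms of the fixed curve $\mathcal{C}$), so the length of $A$ can be absorbed into the implicit constant.

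For the first inequality, starting from \eqref{2dFavC},
\begin{equation*}
\FavC(S) = \int_A |\Phi_\al(S)|\, d\al,
\end{equation*}
I would invoke Lemma~\ref{smallProjProp} inside the integrand to get $|\Phi_\al(S)| \lesssim \mu(S)$ uniformly in $\al$, and then conclude
\begin{equation*}
\FavC(S) \le \int_A |\Phi_\al(S)|\, d\al \lesssim |A|\, \mu(S) \lesssim \mu(S),
\end{equation*}
where the implicit constant depends on $\la$ and on $|I|$ (hence on $\mathcal{C}$), but not on $S$ or $\al$.

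For the second inequality, given $\mathcal{S} \su \mathcal{E} \su \R^2 \times A$, I would use formula \eqref{3dFavC} with $J = A$ and the slice notation $S_\al = \{e \in \R^2 : (e,\al) \in \mathcal{S}\} \su E$. Applying Lemma~\ref{smallProjProp} with $S$ replaced by $S_\al$ yields $|\Phi_\al(S_\al)| \lesssim \mu(S_\al)$ for each $\al \in A$, so that
\begin{equation*}
\FavC(\mathcal{S}) = \int_A |\Phi_\al(S_\al)|\, d\al \lesssim \int_A \mu(S_\al)\, d\al.
\end{equation*}
By Fubini's theorem together with the definition of the normalized measure $\nu = |\cdot \cap A|\,|I|^{-1}$ from Section~\ref{measObs}, the right-hand integral equals $|I|\,(\mu \times \nu)(\mathcal{S})$. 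Since $|I|$ is a fixed finite quantity, this yields the desired bound $\FavC(\mathcal{S}) \lesssim (\mu \times \nu)(\mathcal{S})$.

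There is no real obstacle here; the only point worth double-checking is the measurability needed for Fubini, which is standard given that $\mathcal{S}$ is a subset of the Borel set $\mathcal{E}$ and that $\mu$ is a Borel measure. This corollary is essentially a bookkeeping step that packages Lemma~\ref{smallProjProp} into the two forms (on $\R^2$ and $\R^3$) in which it will be invoked throughout the decomposition arguments of the subsequent sections.
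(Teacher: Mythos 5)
Your proof is correct and follows essentially the same route as the paper: apply Lemma~\ref{smallProjProp} pointwise in $\al$ under the integrals defining $\FavC$ via \eqref{2dFavC} and \eqref{3dFavC}, and use the boundedness of $A$ (and the normalization of $\nu$) to absorb the resulting constants. Your explicit Fubini identity $\int_A \mu(S_\al)\,d\al = |I|\,(\mu\times\nu)(\mathcal{S})$ is a slightly more precise rendering of the paper's final step, but the argument is the same.
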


\begin{proof}
By the definition of Favard curve length described by \eqref{2dFavC} and Lemma~\ref{smallProjProp}, 
\begin{align*}
\FavC\pr{S} 
&= \int_A \abs{\Phi_\al\pr{S}} d\al 
\lesssim \int_A \mu\pr{S} d \al 
\lesssim \mu\pr{S},
\end{align*}
where we have used that $A = \brac{0,1} - I$ is bounded in the last step.

With $S_\al = \set{s \in \R^2 : \pr{s, \al} \in \mathcal{S}} \su E$, it follows from the definition of Favard curve length described by \eqref{3dFavC} that
\begin{align*}
\FavC\pr{\mathcal{S}} 
= \int_{A} \abs{\Phi_\al\pr{S_\al}} d\al
\lesssim \int_{A} \mu\pr{S_\al} d\al
\lesssim \pr{\mu \times \nu }\pr{\mathcal{S}},
\end{align*}
where we have again used Lemma~\ref{smallProjProp} and the boundedness of $A$.
\end{proof}

The takeaway is that we have three different approaches to estimating the Favard curve length.
The first way is to work directly with the projection.
In the other two approaches, we either show that the $\mu$-measure or the $\pr{\mu \times \nu}$-measure is small, and then conclude from either Lemma~\ref{smallProjProp} or Corollary~\ref{smallFavard} that the Favard curve length is comparably small.
We use these distinct approaches on the different pieces of $E$ and $\mathcal{E}$.

\smallskip

\section{Decomposition of the Set}
\label{decomp}

Using the tools that were established in the previous section, we now decompose the compact set $E$ and the corresponding set of pairs $\mathcal{E}$ associated to $E$ as defined in \eqref{ESetDef}.
Recall our hypotheses, that for some sufficiently large $N \in \N$, there is a sequence of scales
\begin{equation}
0 < r_{N}^- \le r_{N}^+ < \cdots < r_{1}^- \le r_{1}^+ \le 1
\end{equation}
satisfying the uniform length bound, $\mathcal{H}^1_{r_{n}^-, r_{n}^+}(E) \le L$ for all $n = 1, 2, \ldots, N$; and separation of scales, $r_{n+1}^+ \le \tfrac 1 2 r_{n}^-$ for all $n = 1, 2, \ldots, N-1$.
The curve double-sectors are denoted by $\mathcal{X}_{e, \al}\pr{r, M}$ and defined in \eqref{circSectors}.
We begin with a definition.

\begin{defn}[Curve pairs]\label{curve_pairs_defn}
\label{CircPairs}
Let $n \in \set{101, 102, \ldots, N - 100}$ and assume that $M > 10^5$.
We say that a pair $\pr{e, \al} \in \mathcal{E}$ is a {\bf curve pair} at scale $n$ with Lipschitz constant $M$ if there exists $r \in \brac{r_{n+100}^- , r_{n-100}^+}$ so that 
\begin{equation*}
\mu\pr{\mathcal{X}_{e, \al}\pr{r, M/10^4} \setminus \mathcal{X}_{e, \al}\pr{r_{n+100}^-, M/10^4}} > N^{-1/100} r/M.
\end{equation*}
Let  $\Cur_{n, M} \su \mathcal{E}$ denote the set of all curve pairs at scale $n$ with Lipschitz constant $M$.
\end{defn}

\begin{defn}[Non-curve pairs]\label{nc_pairs_defn}
A pair $\pr{e, \al} \in \mathcal{E}$ is called a {\bf non-curve pair} at scale $n$ with Lipschitz constant $M$ if it is does not belong to $\Cur_{n, M}$.  We let the set of all such pairs be denoted by $\NCur_{n,M}$.
\end{defn}

Now,
$$\mathcal{E} = \Cur_{n, M} \sqcup \NCur_{n, M}.$$
Although this decomposition of $\mathcal{E}$ holds for any scale $n \in \set{101, 102, \ldots, N - 100}$, we will make a specific choice for $n$ and $M$ below.

\begin{rem}
For a non-curve pair $(e,\alpha)$, while a neighborhood of $e$ may still concentrate along {\it some} curve, we name them as such because there is not a clustering of points along the specific curve $C_{e,\al}$.
\end{rem}

\subsection{Non-curve elements}\label{ncintro}

For each $\al \in A$, we define
\begin{equation}
\label{NalDef}
N_\al := \set{e \in E : \pr{e, \al} \in \NCur_{n, M}},
\end{equation}
where $\NCur_{n, M}$ is as in Definition~\ref{nc_pairs_defn}.
We refer to these points as the non-curve elements (with respect to $\al$).  
Roughly speaking, this is the set of points $e \in E$ whose neighborhoods don't cluster about the curve $C_{e,\al}$.

By the uniform length bound described in \eqref{Ulb}, there exists a finite collection $\mathcal{B}_n$ of open balls of radius between $r_{n}^-$ and $r_{n}^+$ that cover E, such that 
\begin{equation}
\label{BCover}
\sum_{B \in \mathcal{B}_n} r\pr{B} \lesssim L.
\end{equation}
We use this cover to define an exceptional subset of low-density elements in $N_\al $ as follows.

\begin{defn}[Low density intervals]
\label{lowDJ_defn} 
For each $B \in \mathcal{B}_n$, we say that an interval $J \su \R$ is of \textbf{low density relative to $B$ and $\al$} if $\abs{J} \le r(B)$ and
\begin{equation}
\label{lowDJ}
\mu \pr{ B \cap N_\al \cap \Phi_\al^{-1}\pr{5 J}} \le 10^{10} N^{-1/100} \abs{J},
\end{equation}
where $5J$ denotes the interval with the same center as $J$ but $5$ times its radius.
Let $\mathcal{J}_B$ denote the set of all intervals of low density relative to $B$ and $\al$.
\end{defn}

Then we define the exceptional set $G_\al$ as
\begin{equation}
\label{EalDef}
G_\al = \bigcup_{B \in \mathcal{B}_n} \bigcup_{J \in \mathcal{J}_B} B \cap N_\al \cap \Phi_\al^{-1}\pr{J}.
\end{equation}
If we define $K_\al = N_\al \setminus G_\al$,
 then it is clear that
$$N_\al = K_\al \sqcup G_\al.$$
A Vitali covering argument is used to show that the exceptional points, $G_\al$, have a small $\mu$-measure. 
Then we show that the remaining points in $K_\al$ are Lipschitz in nature.
By the near unrectifiability assumption given in \eqref{unrect}, the $\mu$-measure of $K_\al$  must  be small.
These details are presented in Section~\ref{goodPoints}.
This part of our article contains a number of novel ideas that differentiate it from the corresponding arguments given in \cite{Tao09}.

\subsection{Curve pairs}

The way in which we break down the curve pairs is somewhat complex.
Our decomposition will consist of (neighborhoods of) subsets of $\mathcal{E}$ associated to high multiplicity curves, positive multiplicity curves, and high density curve strips at various scales.
Here we use the pigeonhole principle to choose scales, and we therefore need to work on different scales at each stage of the decomposition.
We start by defining the different kinds of subsets that we use to decompose our set.

\begin{defn}[High multiplicity curves]
\label{hmSC}
Let $n \in \set{1, 2, \ldots, N}$.
A curve $C \su \R^2$ is said to be of {\bf high multiplicity at a scale index at most $n$} if $E \cap C$ contains a subset of cardinality at least $N^{1/100}$ that is $r_{n}^-$-separated.
That is, for any two points in this subset of $E \cap C$, the distance between these points is at least $r_{n}^-$.
Let 
$$H_n = \set{\pr{e, \al} \in \mathcal{E}  : C_{e, \al} \textrm{ is of high multiplicity at a scale index at most } n}.$$
\end{defn}

Using a diagonalization argument and that $E$ is compact, it can be shown that each $H_n$ is closed, and therefore is itself compact; see Appendix \ref{apx} for details.
Note that these sets are also nested in the sense that 
$$H_1 \subset H_2 \subset \ldots \subset H_N \subset \mathcal{E}.$$

If a pair is not associated with a high multiplicity curve, but is also not associated with a curve that only intersects $E$ at one point, then it is associated to what we call a positive multiplicity curve.
We use our scales to quantify such pairs and the associated curves as follows.

\begin{defn}[Positive multiplicity curves]
\label{pmSC}
Let $n \in \set{1, 2, \ldots, N} $.
We say that a pair $\pr{e, \al} \in \mathcal{E}$ has {\bf positive multiplicity at scale index $n$} if there exists a $y \in E \cap C_{e, \al}$ such that $\abs{y - e} \in \brac{r_{n+ N^{-7/100}N}^-, r_{n-N^{-7/100}N}^+}$.
Let 
$$P_n = \set{\pr{e, \al} \in \mathcal{E}  : C_{e, \al} \textrm{ has positive multiplicity at a scale index } n}.$$
\end{defn}

To allow for some wiggle room, we also introduce high density curve strips.
 
\begin{defn}[High density strips]
\label{hdSS}
Let $n \in \set{1, 2, \ldots, N}$.
For $\al \in A$ and an interval $J \su \R$ with $\abs{J} \ge r_{n}^-$, a curve strip $\Phi_{\al,+}^{-1}\pr{J} \su \R^2$ is said to have {\bf high density at scale index $n$} if
$$\mu\pr{\Phi_{\al,+}^{-1}\pr{J}} \ge N^{1/100} \abs{J}.$$
Let
$$D_n = \set{\pr{e, \al} \in \mathcal{E} : C_{e, \al} \subset \Phi_{\al,+}^{-1}\pr{J} \textrm{ for some } \Phi_{\al,+}^{-1}\pr{J} \textrm{ with high density at scale index } n}.$$
\end{defn}

In a sense, these high density curve strips resemble the high multiplicity curves when the counting measure is replaced by the $\mu$-measure.
In fact, we have that each $D_n$ is compact (since each $D_n$ is closed, as shown in Appendix \ref{apx}) and that 
$$D_1 \subset D_2 \subset \ldots \subset D_N \subset \mathcal{E}.$$

When we decompose our set, we use two different kinds of neighborhoods: standard neighborhoods and parametric neighborhoods.

\begin{defn}[Neighborhoods]
Let $\mathcal{S} \subset \mathcal{E}$ and $\eps > 0$. 
\begin{itemize}
\item[-] The {\bf $\eps$-neighborhood of $\mathcal{S}$} is defined as
$$\mathcal{N}_\eps\pr{\mathcal{S}} = \set{\pr{b, \be} \in \R^2 \times \R : \norm{\pr{e - b,\al - \be}} < \eps \textrm{ for some } \pr{e, \al} \in \mathcal{S}},$$
where $\norm{\cdot}$ denotes the Euclidean norm in $\R^3$. 
\item[-] The {\bf $\eps$-parametric neighborhood of $\mathcal{S}$} is defined as
$$\mathcal{M}_\eps\pr{\mathcal{S}} = \set{\pr{e, \be} \in E \times \R \,  : \abs{\al - \be} < \eps \textrm{ for some } \pr{e, \al} \in \mathcal{S}}.$$
\end{itemize}
\end{defn}

As we will see below, the parametric neighborhoods are used with the high multiplicity curves, while the standard neighborhoods are used with the high density strips.
The reason why we require different kinds of neighborhoods becomes evident in the technical arguments that appear in Section~\ref{FavCirc}.

Now we state the \textit{sliding pigeonhole principle} that will be used repeatedly when we choose our scales.

\begin{lem}[Pigeonhole Principle]
\label{pHole}
Let $\pr{X, \mu}$ be a measure space.
Suppose $E_0 \subset E_1 \subset \ldots \subset E_N \subset X$ is sequence of measurable sets with $N \ge 2$.
If $\eps \in \brac{\frac 1 N, \frac 1 2}$, then there exists $n, m \in \set{0, 1, \ldots, N} $ with $m - n \ge \eps N$ such that $\mu\pr{E_m \setminus E_n} \lesssim \eps \mu\pr{E_N}$.
\end{lem}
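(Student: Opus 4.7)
The plan is to execute a direct pigeonhole argument on a partition of the index set $\{0, 1, \ldots, N\}$ into consecutive blocks of length roughly $\eps N$, and then exploit the telescoping property guaranteed by the nested structure of the sets $E_k$.

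First, I would set $J = \lceil \eps N \rceil$, so that $J \ge \eps N \ge 1$ (using the hypothesis $\eps \ge \tfrac{1}{N}$) and $J \le \eps N + 1 \le 2\eps N$ (again using $\eps N \ge 1$). This is the block length. Next, let $K = \lfloor N/J \rfloor$, the number of disjoint blocks of length $J$ that fit within $\{0, 1, \ldots, N\}$. From the sandwich on $J$, one obtains $K \ge N/(2\eps N) = 1/(2\eps)$, and from $\eps \le \tfrac{1}{2}$ one sees that $K \ge 1$; a slightly tighter inspection gives $K \ge 2$, which is implicitly what allows the pigeonhole to produce a small block.

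Next, I would consider the $K$ pairwise disjoint sets
\begin{equation*}
A_k := E_{kJ} \setminus E_{(k-1)J}, \qquad k = 1, 2, \ldots, K.
\end{equation*}
The nesting hypothesis $E_0 \su E_1 \su \cdots \su E_N$ ensures these are genuinely disjoint subsets of $E_N$, so
\begin{equation*}
\sum_{k=1}^K \mu(A_k) \le \mu(E_N).
\end{equation*}
By averaging (the pigeonhole step), there must exist some index $k^\ast \in \{1, \ldots, K\}$ for which
\begin{equation*}
\mu(A_{k^\ast}) \le \frac{\mu(E_N)}{K} \le 2\eps \, \mu(E_N).
\end{equation*}

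Finally, I would set $n = (k^\ast - 1)J$ and $m = k^\ast J$. Then $n, m \in \{0, 1, \ldots, N\}$ (since $k^\ast J \le KJ \le N$), the gap satisfies $m - n = J \ge \eps N$ as required, and $\mu(E_m \setminus E_n) = \mu(A_{k^\ast}) \lesssim \eps \, \mu(E_N)$, which is the claimed bound. There is no serious obstacle here; the only subtlety is a bit of careful bookkeeping to verify that the constraints $\eps \in [\tfrac{1}{N}, \tfrac{1}{2}]$ simultaneously yield $J \ge 1$, $J \le N$, and $K \gtrsim 1/\eps$ so that the pigeonhole step actually produces a useful bound.
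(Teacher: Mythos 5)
Your argument is correct, but it is not quite the paper's: you partition the index set into $K=\lfloor N/J\rfloor$ \emph{disjoint} consecutive blocks of length $J=\lceil \varepsilon N\rceil$ and pigeonhole over the $K$ pairwise disjoint sets $E_{kJ}\setminus E_{(k-1)J}$, whose measures sum to at most $\mu(E_N)$; the paper instead sums over \emph{all} overlapping ("sliding") windows $E_{n+k}\setminus E_n$, $n=0,\dots,N-k$, observes that each elementary difference $E_\ell\setminus E_{\ell-1}$ lies in at most $k$ such windows, and so gets $\sum_{n}\mu(E_{n+k}\setminus E_n)\le k\,\mu(E_N)$ before pigeonholing to obtain a window of measure at most $\frac{k}{N-k}\mu(E_N)$ with $k=\lceil \varepsilon N\rceil$. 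Your disjoint-block version is a bit more economical (no counting of window multiplicities), while the sliding version gives a window at every starting position and a cleaner explicit constant; both yield the stated $\lesssim \varepsilon\,\mu(E_N)$. One bookkeeping point: your claims $K\ge \frac{1}{2\varepsilon}$ and $K\ge 2$ are not literally correct, since the floor can drop below $\frac{1}{2\varepsilon}$ and one can have $K=1$ (e.g.\ $N=3$, $\varepsilon=\tfrac12$ gives $J=2$, $K=1$); but since $N/J\ge 1$ one has $\lfloor N/J\rfloor \ge \tfrac12 N/J \ge \tfrac{1}{4\varepsilon}$, so $\mu(A_{k^\ast})\le \mu(E_N)/K\le 4\varepsilon\,\mu(E_N)$, and the implicit constant in $\lesssim$ absorbs this, so the proof stands.
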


\begin{proof}
Note that we can write $E_N = E_0 \sqcup \pr{E_1 \setminus E_0} \sqcup \ldots \sqcup \pr{E_N \setminus E_{N-1}}$, where the union is disjoint.
Observe that for any $\ell \in \set{1, 2, \ldots, N} $ and any $k \in \N$, $E_{\ell} \setminus E_{\ell-1} \su E_{n+k} \setminus E_n$ whenever $\max\set{\ell - k,0} \le n \le \min\set{\ell - 1, N-k}$.
This means that each set of the form $E_{\ell} \setminus E_{\ell-1}$ can belong to at most $k$ sets of the form $E_{n+k} \setminus E_n$.
It follows that
$$\sum_{n=0}^{N-k} \mu\pr{E_{n+k} \setminus E_n} \le k \sum_{\ell = 1}^N \mu\pr{E_{\ell} \setminus E_{\ell-1}} \le k \mu\pr{E_N}.$$
We deduce from the pigeonhole principle that there exists $n \in \set{0, 1, \ldots, N-k}$ for which $\mu\pr{E_{n+k} \setminus E_n} \le \frac{k}{N-k} \mu\pr{E_N}.$
We reach the conclusion of the lemma by setting $k = \lceil \eps N \rceil$.
\end{proof}

We've reached the description of the decomposition and the role of scales.
\smallskip

{\bf Step 1:} Our first sets in the decomposition of the normal pairs will be associated to the high multiplicity curves.
Let $Z_0 := \brac{0.1 N, 0.9 N} \cap \Z$ and note that since $N^{1/100} \in \Z_{\ge 3}$, then for any $n \in \Z_0$, we have that $n \pm N^{-3/100} N \in \set{1, \ldots, N}$.
An application of the pigeonhole principle from Lemma~\ref{pHole} in combination with \eqref{prodMeas} implies that there exists a stable scale index $n_0 \in Z_0$ for which
\begin{equation}
\label{HPHole}
\pr{\mu \times \nu } \pr{H_{n_0 + N^{-3/100} N} \setminus H_{n_0 - N^{-3/100} N}} \lesssim N^{-3/100} L.
\end{equation}

With this scale index $n_0$ fixed, we define parametric neighborhoods of the smaller set as
\begin{align}
& \widetilde H = \mathcal{M}_{r_{n_0 - N^{-3/100} N + 10}^-}\pr{H_{n_0 - N^{-3/100} N}} 
\label{wHDef}\\
& H = \mathcal{M}_{\frac 1 2 r_{n_0 - N^{-3/100} N + 10}^-}\pr{H_{n_0 - N^{-3/100} N}} .
\label{HDef}
\end{align}
Since $H_{n_0 - N^{-3/100} N} \su H \su \widetilde H$, if we define $\Delta H := H_{n_0 + N^{-3/100} N} \setminus H$, then $\Delta H \su H_{n_0 + N^{-3/100} N} \setminus H_{n_0 - N^{-3/100} N}$.
Combining this observation with \eqref{HPHole} shows that
\begin{equation}
\label{DHBd}
\pr{\mu \times \nu }\pr{\Delta H} \lesssim N^{-3/100} L.
\end{equation}
In Section~\ref{tildeHFav}, we will estimate $\FavC(\widetilde H)$ and show that it is also small.
While the specific choice of $n_0 \in Z_0 $ is not used to estimate $\FavC(\widetilde H)$, it is used to control $(\mu\times \nu)(\Delta H)$, which will be important in Step 4, where we handle the remaining curve pairs.   
 \\

{\bf Step 2:} Our next stage of the decomposition uses the positive multiplicity curves.
For this step, we restrict to the range of indices to $Z_1 := \brac{n_0 - 0.9 N^{-3/100} N, n_0 + 0.9 N^{-3/100} N} \cap \Z$.
If $\pr{e, \al} \in \mathcal{E} \setminus \pr{H \cup \Delta H}$, then $\pr{e, \al} \notin H_{n_0 + N^{-3/100} N}$.
By Definition~\ref{hmSC}, this means that the curve $C_{e,\al}$ contains at most $N^{1/100}$ points of $E$ that are $r_{n_0 + N^{-3/100} N}^-$-separated.
Let $y \in E \cap C_{e,\al}$.
If $\abs{y - e} \sim r_{n_0 + N^{-3/100} N}^-$, then there can be at most $O\pr{N^{-7/100}N}$ indices $n$ such that $\abs{y - e} \in \brac{r_{n+ N^{-7/100}N}^-, r_{n-N^{-7/100}N}^+}$.
Note that if $\tilde y \in E \cap C_{e, \al}$ is another point for which $\abs{\tilde y - e} \approx \abs{y - e}$, so that $y$ and $\tilde y$ are not scale-separated, then $\tilde y$ is associated to roughly the same set of indices as $y$.
Repeating the argument for all of the scale-separated points in $E \cap C_{e,\al}$, we see that there are at most $N^{1/100} \times O\pr{ N^{-7/100} N}$ indices $n$ such that $\abs{y - e} \in \brac{r_{n+ N^{-7/100}N}^-, r_{n-N^{-7/100}N}^+}$ for some $y \in E \cap C_{e, \al}$.
Comparing this with Definition~\ref{pmSC}, we conclude that there are at most $O\pr{N^{-6/100} N}$ indices $n$ in our range such that $\pr{e, \al} \in P_n$.
It follows that
\begin{align*}
\sum_{n \in Z_1} \pr{\mu \times \nu }\pr{P_n \setminus \pr{H \cup \Delta H}}
&\lesssim {\pr{\mu \times \nu} \pr{\mathcal{E} \setminus \pr{H \cup \Delta H}}} N^{-6/100} N \\
&\le {\pr{\mu \times \nu} \pr{\mathcal{E}}} N^{-6/100} N
\le N^{-6/100} N L,
\end{align*}
where we have applied \eqref{prodMeas}.
The standard pigeonhole principle then implies that there exists $n_1 \in Z_1$ such that
\begin{equation}
\label{posBD}
\pr{\mu \times \nu }\pr{P_{n_1} \setminus \pr{H \cup \Delta H}} \lesssim N^{-3/100} L.
\end{equation} 
\\
Next, we make the elementary observation that $(P_{n_1} \setminus H )\su \pr{P_{n_1} \setminus \pr{H \cup \Delta H}} \sqcup \Delta H$ and apply  \eqref{DHBd} and \eqref{posBD} to conclude that
\begin{equation}
\label{posBD2}
\pr{\mu \times \nu }\pr{P_{n_1} \setminus H} \lesssim N^{-3/100} L.
\end{equation} \\

{\bf Step 3:} Now we use the high density curve strips to further decompose the curve pairs.
For this step, we restrict our range of indices to $Z_2 := \brac{n_1 - 0.9 N^{-7/100} N, n_1 + 0.9 N^{-7/100} N}\cap \Z$
and we observe that since $n_1 \in Z_1$ and $n_0 \in Z_0$, then
\begin{equation}
\label{n2Range}
\begin{aligned}
Z_2 &= \brac{n_1 - 0.9 N^{-7/100} N, n_1 + 0.9 N^{-7/100} N} \cap \Z \\
&\su  \brac{n_0 - 0.9 N^{-3/100} N - 0.9 N^{-7/100} N, n_0 + 0.9 N^{-3/100} N + 0.9 N^{-7/100} N} \cap \Z \\
 &\su \brac{0.1 N\pr{1 - 9 N^{-3/100} - 9 N^{-7/100}}, 0.9 N\pr{1 + N^{-3/100} + N^{-7/100} }} \cap \Z \\
&\su \brac{\frac{161}{2430}N, \frac{2269}{2430}N} \cap \Z,
\end{aligned} 
\end{equation}
where we have used the assumption that $N^{1/100} \ge 3$ to reach the last line.
Moreover, for any $n \in Z_2$, $n \pm N^{-10/100} N \in \set{1, 2, \ldots, N}$.
By \eqref{prodMeas} and Lemma~\ref{pHole}, there exists $n_2 \in Z_2$ so that
\begin{equation}
\label{DPHole}
\pr{\mu \times \nu }\pr{D_{n_2 + N^{-10/100} N} \setminus D_{n_2 - N^{-10/100} N}} \lesssim N^{-3/100} L.
\end{equation}
Following the constructions from the high density curves, we fix $n_2$ and define (standard) neighborhoods of the smaller set as 
\begin{align}
& \widetilde D = \mathcal{N}_{r_{n_2 - N^{-10/100} N + 10}^-}\pr{D_{n_2 - N^{-10/100} N}} 
\label{wDDef} \\
& D = \mathcal{N}_{\frac 1 2 r_{n_2 - N^{-10/100} N + 10}^-}\pr{D_{n_2 - N^{-10/100} N}} .
\label{DDef}
\end{align}
Since $D_{n_2 - N^{-10/100} N} \su D \su \widetilde D$, then with $\Delta D := D_{n_2 + N^{-10/100} N} \setminus D$, we have $\Delta D \su D_{n_2 + N^{-10/100} N} \setminus D_{n_2 - N^{-10/100} N}$ and we conclude from \eqref{DPHole} that
\begin{equation}
\label{DDBd}
\pr{\mu \times \nu} \pr{\Delta D} \lesssim N^{-3/100} L.
\end{equation}
We analyze $\FavC\pr{\widetilde D}$ in an upcoming section and show that it is also small, see Section~\ref{tildeDFav}. \\

{\bf Step 4:} 
We now handle the remaining curve pairs.  
First, we define an exceptional set of the finer scale elements of the sets we have just introduced.
Let
\begin{equation}
\label{DeltaDef}
\Delta = \Delta H \cup \pr{P_{n_1} \setminus H} \cup \Delta D.
\end{equation}
By combining \eqref{DHBd}, \eqref{posBD2}, and \eqref{DDBd}, we see that
\begin{equation}
\label{DelBd}
\pr{\mu \times \nu} \pr{\Delta} \lesssim N^{-3/100} L.
\end{equation}

Now, with $n_2$ as selected above, define the Lipschitz constant to be 
\begin{equation}
\label{LipConstant}
M_2 := \frac{10^4}{r_{n_2-200}^-}.
\end{equation}
By \eqref{n2Range} and the assumption that $N^{1/100} \ge 3$, we have $n_2 \ge \frac{161}{2430}N \gg 400$, so that $M_2$ is well-defined.
Moreover, since $n_2 \le \frac{2269}{2430}N  \ll N - 100$, then we may define curve pairs with respect to $n_2$, see Definition~\ref{curve_pairs_defn}.
If we set
\begin{equation}
\label{FDef}
F = \Cur_{n_2, M_2} \setminus \pr{\widetilde H \cup \widetilde D} \su \mathcal{E},
\end{equation} 
then 
\begin{equation}
\label{CurDecomp}
\Cur_{n_2, M_2} \su (\widetilde H \cup \widetilde D) \sqcup F.
\end{equation}
We use that $\Delta$ has a small measure to prove that $F$ has a small measure as well.
These details are available in Section~\ref{FAnalysis}.

\subsection{Summary of decomposition}

For our set $E \su \brac{0,1}^2$, we have the associated set $\mathcal{E} \su E \times A$ with the property that for each $\pr{e,\al} \in \mathcal{E}$, the projection $\Phi_\al(e)$ is non-empty.  
Moreover, $\pr{\mu \times \nu} \pr{\mathcal{E}} \le \mu(E) = L$.

We choose index scales sequentially via the pigeonhole principle where $n_0 \in Z_0 $, $n_1 \in Z_1$, and $n_2 \in Z_2$.
The Lipschitz constant $M_2$ is chosen to depend on $n_2$.

\begin{figure}[h]
\centering
\begin{tikzpicture}
\tikzstyle{level 1}=[level distance=1.5cm, sibling distance=4cm]
\tikzstyle{level 2}=[level distance=1.5cm, sibling distance=2cm]
  \node {$\mathcal{E}$}
    child { node {$\disp \NCur_{n_2, M_2}$} }
    child { node {$\Cur_{n_2, M_2}$}
      child { node {$\widetilde H$} } 
      child { node {$\widetilde D$} } 
      child { node {$F$} } };
\end{tikzpicture}
\qquad
\begin{tikzpicture}
\tikzstyle{level 1}=[level distance=2cm, sibling distance=2.5cm]
  \node {$N_\al \su E$}
    child { node {$G_\al$} }
    child { node {$K_\al$} };
\end{tikzpicture}
\caption{Visual representations of how we decompose $\mathcal{E}$ and $N_\al$ for analysis.}
\label{trees}
\end{figure}
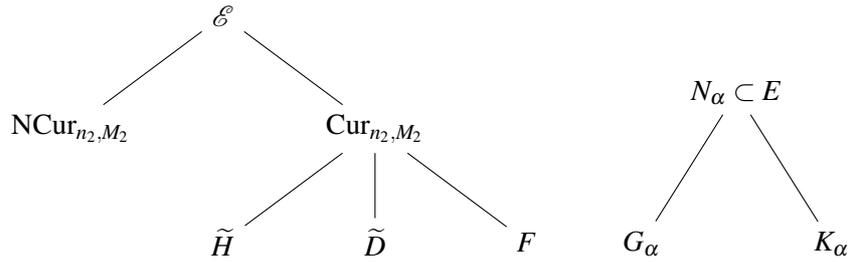

To decompose $\mathcal{E}$, we first write $\mathcal{E} = \Cur_{n_2, M_2} \sqcup \NCur_{n_2, M_2}$, where the union is disjoint.
Then we write $\Cur_{n_2, M_2} \su (\widetilde H \cup \widetilde D ) \sqcup F$ and define $\disp N_\al = \set{ e : \pr{e, \al} \in \NCur_{n_2, M_2}} \su E$, where $N_\al = G_\al \sqcup K_\al$.
A visual representation of this decomposition is given in Figure~\ref{trees}.
The next section will be devoted to estimating the measures of the non-curve elements, $N_\al$.
First we analyze the exceptional set $G_\al$, then we analyze the Lipschitz-like set $K_\al$.
The Favard curve lengths of $\widetilde H$ and $\widetilde D$ are estimated in Section~\ref{FavCirc}.
Section~\ref{FAnalysis} contains the analysis of $F$, which uses that $\Delta$ has a small measure.
The proof is completed in Section~\ref{conclusion}.

\smallskip

\section{The Non-Curve Elements}
\label{goodPoints}

Here we estimate the measures of the sets $N_\al = K_\al \sqcup G_\al$ defined in Subsection~\ref{ncintro} with $n = n_2$.  
We show that the exceptional set of low-density elements, $G_\al$, has small $\mu$-measure via a straight-forward Vitali covering argument.
Next, we turn to the main effort of this section,
 which is to show that the set $K_\al = N_\al \setminus G_\al$ is Lipschitz in nature.  
 It will follow then from the near unrectifiability assumption, \eqref{unrect}, that $K_\al$ also has small measure. 
 Specifically, we show that $\mu\pr{G_\al} \lesssim N^{-1/100} L$ and  $\mu\pr{K_\al} \lesssim N^{-1/100} L$, where $\mu$ denotes the $1$-dimensional Hausdorff measure $\mathcal{H}^1$ restricted to $E \su \brac{0,1}^2$.

\subsection{Estimating the measure of $G_\al$}

We prove that the $\mu$-measure of $G_\al$ is small.
This proof relies on a Vitali covering, as well as the definition of the exceptional set.

\begin{prop}[$G_\al$ has small measure]
\label{EalProp}
For $G_\al$ as defined in \eqref{EalDef}, we have $\mu\pr{G_\al} \lesssim N^{-1/100} L$.
\end{prop}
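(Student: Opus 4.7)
The plan is a standard Vitali covering argument, engineered around the fact that the constant $5$ in the definition of low density exactly matches the $5$ in Vitali's lemma. Since $\mathcal{B}_{n_2}$ covers $E \supset G_\al$, the bound will follow once I show, for each ball $B \in \mathcal{B}_{n_2}$,
\begin{equation*}
\mu\pr{G_\al \cap B} \lesssim N^{-1/100} r(B),
\end{equation*}
and then sum over $B$ using the length estimate \eqref{BCover}.

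Fix $B$. The intervals in $\mathcal{J}_B$ all satisfy $|J| \le r(B)$, so their diameters are uniformly bounded and the Vitali covering lemma produces a pairwise disjoint countable subcollection $\mathcal{J}_B' \su \mathcal{J}_B$ with
\begin{equation*}
\bigcup_{J \in \mathcal{J}_B} J \su \bigcup_{J' \in \mathcal{J}_B'} 5J'.
\end{equation*}
Since $G_\al \cap B \su \bigcup_{J \in \mathcal{J}_B} B \cap N_\al \cap \Phi_\al^{-1}(J)$, I obtain
\begin{equation*}
\mu\pr{G_\al \cap B} \le \sum_{J' \in \mathcal{J}_B'} \mu\pr{B \cap N_\al \cap \Phi_\al^{-1}(5J')} \le 10^{10} N^{-1/100} \sum_{J' \in \mathcal{J}_B'} |J'|,
\end{equation*}
where the second inequality is the defining low-density bound \eqref{lowDJ} applied to each $J' \in \mathcal{J}_B' \su \mathcal{J}_B$.

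The remaining task is to show $\sum_{J' \in \mathcal{J}_B'} |J'| \lesssim r(B)$. I will first discard any $J' \in \mathcal{J}_B'$ for which $5J' \cap \Phi_\al(B) = \emptyset$, since these contribute nothing to the sum on the right-hand side above. For every surviving $J'$, the interval $5J'$ meets $\Phi_\al(B)$; but the proof of Lemma~\ref{smallProjProp} shows that $\Phi_\al(B)$ is itself contained in an interval of length $\lesssim r(B)$. Together with $|J'| \le r(B)$, this forces each surviving $J'$ into a single interval $U$ of length $\lesssim r(B)$, and the disjointness of $\mathcal{J}_B'$ then yields $\sum |J'| \le |U| \lesssim r(B)$.

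Combining these steps gives $\mu\pr{G_\al \cap B} \lesssim N^{-1/100} r(B)$, and summing over $B \in \mathcal{B}_{n_2}$ via \eqref{BCover} delivers $\mu(G_\al) \lesssim N^{-1/100} L$. There is no real obstacle here: the definitions have been set up precisely so that the factor $5$ built into $\mathcal{J}_B$ absorbs the $5$-expansion of Vitali, and the bound $|\Phi_\al(B)| \lesssim r(B)$ (rather than $\lesssim \mu(B)$, which would not sum correctly) is what makes the final summation close.
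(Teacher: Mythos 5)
Your proof is correct and follows essentially the same route as the paper: the Vitali covering lemma matched to the factor $5$ built into Definition~\ref{lowDJ_defn}, the low-density bound \eqref{lowDJ} on each selected interval, the packing bound $\sum_{J'} \abs{J'} \lesssim r(B)$ from disjointness plus the Lipschitz nature of $\Phi_\al$, and the final summation over $\mathcal{B}_{n_2}$ via \eqref{BCover}. The only slip is the claimed containment $G_\al \cap B \su \bigcup_{J \in \mathcal{J}_B} B \cap N_\al \cap \Phi_\al^{-1}\pr{J}$, which need not hold since a point of $G_\al$ lying in $B$ may arise from an interval of low density relative to a \emph{different} ball $B'$; this is harmless, because by the definition \eqref{EalDef} of $G_\al$ and subadditivity it suffices to bound $\sum_{B \in \mathcal{B}_{n_2}} \mu\bigl(\bigcup_{J \in \mathcal{J}_B} B \cap N_\al \cap \Phi_\al^{-1}\pr{J}\bigr)$, and that union is exactly the quantity your Vitali estimate controls.
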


\begin{proof}
We first show that for any $B \in \mathcal{B}_{n_2}$,
\begin{align}
\label{JBEst}
\mu\pr{ \bigcup_{J \in \mathcal{J}_B} B \cap N_\al \cap \Phi_\al^{-1}\pr{J}} \lesssim N^{-1/100} r\pr{B}.
\end{align}
Recall that $\mathcal{J}_B$ is the set of all intervals $J \su \R$ with $\abs{J} \le r(B)$ satisfying the low density condition described by \eqref{lowDJ}.  
To prove \eqref{JBEst}, there is no loss in restricting to those intervals $J$ for which $B \cap \Phi_\al^{-1}\pr{J} \ne \emptyset$.
Moreover, by monotone convergence and the separability of $\R$, it suffices to show that \eqref{JBEst} holds for any finite subcollection of $\mathcal{J}_B$.
Let $\widetilde{\mathcal{J}}_B \su \mathcal{J}_B$ be such a finite subcollection.

By the Vitali covering theorem, there exists a finite, \textit{disjoint} collection $\disp \set{J_k}_{k=1}^K \su \mathcal{J}_B$ so that $\disp \set{5J_k}_{k=1}^K$ covers $\widetilde{\mathcal{J}}_B$ and $B \cap \Phi_\al^{-1}\pr{J_k} \ne \emptyset$ for each $k$.
By the defining property \eqref{lowDJ},
\begin{align*}
\mu \pr{ B \cap N_\al \cap \Phi_\al^{-1}\pr{5 J_k}} 
\le 10^{10} N^{-1/100} \abs{J_k} \quad \textrm{ for each } k = 1, \ldots K.
\end{align*}
It then follows from set containment and basic properties of measures that
\begin{align*}
\mu \pr{\bigcup_{J \in \widetilde{\mathcal{J}_B}} B \cap N_\al \cap \Phi_\al^{-1}\pr{J}}
&\le \mu \pr{\bigcup_{k=1}^K  B \cap N_\al \cap \Phi_\al^{-1}\pr{5 J_k}}
\le \sum_{k=1}^K \mu \pr{ B \cap N_\al \cap \Phi_\al^{-1}\pr{5 J_k}} \\
&\lesssim N^{-1/100} \sum_{k=1}^K \abs{J_k}.
\end{align*}
Since the $J_k$ are disjoint with $\abs{J_k} \le r\pr{B}$ and $\Phi_\al^{-1}\pr{J_k} \cap B \ne \emptyset$ for each $k$, then by the Lipschitz nature of the projection, $\disp \sum_{k=1}^K \abs{J_k} \lesssim r\pr{B}$, leading to \eqref{JBEst}.
It then follows from \eqref{EalDef}, \eqref{JBEst} and \eqref{BCover} that 
\begin{align*}
\mu\pr{G_\al}
&= \mu\pr{\bigcup_{B \in \mathcal{B}_{n_2}} \bigcup_{J \in \mathcal{J}_B} B \cap N_\al \cap \Phi_\al^{-1}\pr{J}}
\le \sum_{B \in \mathcal{B}_{n_2}} \mu\pr{\bigcup_{J \in \mathcal{J}_B} B \cap N_\al \cap \Phi_\al^{-1}\pr{J}} \\
&\lesssim N^{-1/100} \sum_{B \in \mathcal{B}_{n_2}} r\pr{B}
\lesssim N^{-1/100} L,
\end{align*}
as required.
\end{proof}

\subsection{Estimating $K_\al $, the Lipschitz-like elements}
\label{LipSet}

In this subsection, we consider $K_\al$ as in Subsection~\ref{ncintro} and show that for a small ball $B$, all of the elements in $B \cap K_\al$ lie in a narrow band about the graph of some Lipschitz function.  
We then invoke the near unrectifiability condition described by \eqref{unrect} to show that $K_\al$ must have small measure.

Roughly speaking, these arguments follow their counterparts from \cite{Tao09}.
However, given the nonlinear nature of our projections, many additional details and steps have been added.
In fact, this section contains many of the new and interesting ideas of the paper.

We first briefly recall the setup given in Section~\ref{setup_section}. 
For any $\pr{x, \al} \in \mathcal{E}$, $C_{x, \al}$ denotes the extended curve through $x$ defined by 
$$C_{x, \al} 
= \Phi_{\al,+}^{-1}\pr{\Phi_\al\pr{x}}  
= \pr{\al, \Phi_\al\pr{x}} + \mathcal{C}_+
= \set{\pr{\al + t, x_2 - \vp\pr{x_1 - \al} + \vp_+\pr{t}} : t \in I_+}
,$$
where 
$$\Phi_\al\pr{x} = x_2 - \vp\pr{x_1 - \al}, \quad \text{ and } \quad \mathcal{C}_+ = \set{\pr{t, \vp_+\pr{t}} : t \in I_+}. $$
Moreover, $I$ is a closed and bounded interval, $\vp$ is $C^1$, $\abs{\vp'} \le 1$, and $\vp'$ is $\la$-bilipschitz so that for any $s, t \in I$,
$$\la^{-1} \abs{s - t} \le \abs{\vp'\pr{s} - \vp'\pr{t}} \le \la \abs{s - t}.$$
The function $\vp_+$ extends $\vp$ to $I_+$, a $\de$-neighborhood of $I$, and maintains all of these properties.
For any $\pr{x, \al} \in \mathcal{E}$, $\vp_+\pr{x_1 - \al} = \vp\pr{x_1 - \al}$, so we may drop the cumbersome subscript notation in such settings.
Plugging $t = x_1- \al$, shows that  $x \in C_{x,\al}$ and that the slope of the tangent line to the curve $C_{ x,\al}$ at $x=(x_1, x_2)$ is $\varphi'( x_1-\al)$.  

Let $\omega_1^x$ denote the unit vector that points in the direction of the tangent vector, $\pr{1, \varphi'(x_1-\al)}$, and let $\omega_2^x$ be the clockwise rotation of $\omega^x_1$ through an angle of $\frac{\pi}{2}$.
Note that $\om_2$ corresponds to the vector $\om$ that appears in Lemma~\ref{bowtiecontainment}.

The ultimate aim of this section is to show that  $\mu\pr{K_\al} \le N^{-1/100}L$. 
To this end, we fix a ball $B \in \mathcal{B}_{n_2}$, show that  $\mu\pr{K_\al \cap B} \le N^{-1/100}r(B)$, and then sum over $B\in \mathcal{B}_{n_2}$ and apply \eqref{BCover} to reach the conclusion. 
Recall, $B \in \mathcal{B}_{n_2}$ implies that $r(B)\in [r_{n_2^-}, r_{n_2^+}]$, where $r(B)$ denotes the radius of $B$.  
For ease of notation, we write $n$ instead of $n_2$ within this section. 

\begin{rem}
\label{n2Smallness}
As pointed out in the previous section, $n_2 \gg 400$.
In particular, it follows from the separation of scales estimate in \eqref{Sos} that $r_{n}^\pm = r_{n_2}^\pm \le 2^{-400}$.
Moreover, $r_{n-200}^\pm = r_{n_2-200}^\pm \le 2^{-200}$.
\end{rem}

The following technical lemma serves as the main tool in showing that points in $K_\al$ are Lipschitz in nature.

\begin{lem}[$\Phi_\al$ cones]
\label{PhialLipLemm}  
Suppose $x\in  B\cap K_{\al}$.  
For each $y \in B\cap K_{\al}$, it holds that
\begin{equation}
\label{LipIneq}
\abs{\pr{x - y} \cdot \om^x_1 } \le \frac {\la M}{200} \abs{\Phi_\al\pr{x} - \Phi_\al\pr{y}} + \frac{1}{60} r_{n+2}^+. 
\end{equation}
\end{lem}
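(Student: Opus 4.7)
The plan is to argue by contradiction, balancing the two defining features of $K_\al$: membership in $N_\al$ says that curve double-sectors at $x$ hold little $\mu$-mass (Definition~\ref{CircPairs}), while being outside $G_\al$ says that curve strips around $C_{y,\al}$ hold a quantitatively large $\mu$-mass (Definition~\ref{lowDJ_defn}). Suppose for contradiction that some $y \in B \cap K_\al$ violates \eqref{LipIneq}, and decompose $y - x = s\,\om_1^x + t\,\om_2^x$ so the failure reads $|s| > \tfrac{\la M}{200}|\Phi_\al(x) - \Phi_\al(y)| + \tfrac{1}{60} r_{n+2}^+$.

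Convert the failure into a geometric statement through a Taylor expansion of the type recorded in \eqref{curveExp}, combined with the bilipschitz bound $|\vp'(\cdot - \al) - \vp'(\cdot - \al')| \ge \la^{-1}|\al - \al'|$. The two boundary curves $C_{x, \al \pm 10^4/M}$ of the curve double-sector $\mathcal{X}_{x, \al}(r, M/10^4)$ separate from $x$ at vertical rate at least $10^4/(\la M)$, so the half-width of the double-sector at horizontal distance $|s|$ from $x$ is bounded below by a multiple of $10^4|s|/(\la M)$ after absorbing the $O(\la s^2)$ curvature correction. This comfortably exceeds the offset $|\Phi_\al(x) - \Phi_\al(y)| < \tfrac{200|s|}{\la M}$ forced by the negated hypothesis, placing $y$ inside $\mathcal{X}_{x, \al}(r, M/10^4)$ for a suitable $r \sim |s|$; the margin $\tfrac{1}{60} r_{n+2}^+$ together with the separation of scales \eqref{Sos} (which makes $r_{n+100}^-$ many powers of $2$ smaller than $r_{n+2}^+$) keeps a small neighborhood of $y$ outside the inner double-sector $\mathcal{X}_{x, \al}(r_{n+100}^-, M/10^4)$.

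Next, invoke that $y \notin G_\al$: for a ball $B' \in \mathcal{B}_{n_2}$ containing $y$ and any interval $J$ with $\Phi_\al(y) \in J$ and $|J| \le r(B')$, the low-density condition \eqref{lowDJ} fails, yielding $\mu\pr{B' \cap N_\al \cap \Phi_\al^{-1}(5J)} > 10^{10} N^{-1/100}|J|$. Select $|J|$ in the window $r/(10^{10} M) < |J| \lesssim |s|/(\la M)$, which is non-empty thanks to the bilipschitz bound $\la \le 2^{35}$ in Remark~\ref{lambda assumption}. After truncating $B'$ to a small neighborhood of $y$, the thin strip $\Phi_\al^{-1}(5J)$ fits inside $\mathcal{X}_{x, \al}(r, M/10^4) \setminus \mathcal{X}_{x, \al}(r_{n+100}^-, M/10^4)$, and chaining with the non-curve bound from Definition~\ref{CircPairs} produces
\[
10^{10} N^{-1/100}|J| < \mu\pr{\mathcal{X}_{x, \al}(r, M/10^4) \setminus \mathcal{X}_{x, \al}(r_{n+100}^-, M/10^4)} \le N^{-1/100}r/M,
\]
contradicting $|J| > r/(10^{10} M)$.

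The main obstacle is the precise geometric containment, since the strip $\Phi_\al^{-1}(5J)$ has constant vertical thickness while the curve double-sector pinches down to zero width at $x$. Success depends on coordinating three scales simultaneously, namely $r$, the radius $\rho$ of the small neighborhood of $y$ used to truncate the strip, and $|J|$, together with $r_{n+100}^-$ and $r_{n+2}^+$, so that the annular region is wide enough at every horizontal distance between $r_{n+100}^-$ and $r$ to house the truncated strip while still being narrow enough for the non-curve bound to beat the non-low-density lower bound after spending the factor $10^{10}$. The statement's constants $\tfrac{\la M}{200}$ and $\tfrac{1}{60}$, the factor $10^4$ appearing inside $M/10^4$ in Definition~\ref{CircPairs}, and the choice $M = M_2 = 10^4/r_{n_2-200}^-$ from \eqref{LipConstant} are calibrated precisely for this coordination.
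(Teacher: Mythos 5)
Your proposal follows the right general skeleton---negate the inequality, build a curve double-sector around one of the points, build a low-density interval $J$, and derive a contradiction from the clash between Definitions~\ref{CircPairs} and~\ref{lowDJ_defn}---but it has a genuine gap at the geometric containment step, which is exactly the obstacle you flag at the end.

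The gap is that you use only one curve double-sector (around $x$). The strip $\Phi_\al^{-1}(5J)$ has constant vertical thickness on the order of $|s|/(\la M)$, while the annular region $\mathcal{X}_{x,\al}(r, M/10^4) \setminus \mathcal{X}_{x,\al}(r_{n+100}^-, M/10^4)$ pinches to width roughly $r_{n+100}^- \cdot 10^4/(\la M)$ at its inner radius. Since $|s| > \tfrac{1}{60}r_{n+2}^+$, and \eqref{Sos} forces $r_{n+2}^+ \gg r_{n+100}^-$, the strip is many orders of magnitude thicker than the sector near $x$. Points $z \in B' \cap N_\al \cap \Phi_\al^{-1}(5J)$ that are tangentially close to $x$ (say $|(x-z)\cdot\om_1^x| \sim r_{n+50}^-$) therefore lie in the strip but outside the $x$-sector, and nothing in your argument controls their $\mu$-mass. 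Your proposed repair---``truncating $B'$ to a small neighborhood of $y$''---does not help: Definition~\ref{lowDJ_defn} lower-bounds $\mu\pr{B' \cap N_\al \cap \Phi_\al^{-1}(5J)}$ only for full balls $B' \in \mathcal{B}_{n_2}$ with $r(B') \in \brac{r_{n_2}^-, r_{n_2}^+}$; restricting to a smaller neighborhood of $y$ throws away mass on the left side of the chain, so the failure of low density gives you nothing for the truncated set.

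The paper avoids this by working with \emph{two} curve double-sectors simultaneously, one around $x$ and one around $y$, and with the interval $J$ centered at the \emph{midpoint} $\tfrac{\Phi_\al(x)+\Phi_\al(y)}{2}$ so that $5J$ comfortably contains both projections. It also takes $y$ near-extremal in the ``bad'' set $B_\al$ via a supremum, which bounds $|(x-z)\cdot\om_1^x|$ above for every $z$ in the strip. Then for each such $z$, the trivial triangle inequality $|(x-z)\cdot\om_1^x| + |(y-z)\cdot\om_1^x| \ge |(x-y)\cdot\om_1^x|$ forces at least one of $|(x-z)\cdot\om_1^x|$, $|(y-z)\cdot\om_1^x|$ to exceed $\tfrac{1}{2}|(x-y)\cdot\om_1^x| > \tfrac{1}{120}r_{n+2}^+ > r_{n+100}^-$, placing $z$ in the annular part of the sector around $x$ or around $y$. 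The single angular estimate you sketched (via the Taylor expansion and Lemma~\ref{bowtiecontainment}) is then applied at whichever center is tangentially far from $z$. Covering the strip by the \emph{union} $Y_r$ of the two annular sectors is what makes the containment, and hence the contradiction, go through; a one-sector version of the argument fails for the reason you yourself identified.
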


\begin{proof}
Recall that $\om_1^x = \frac{\pr{1, \vp'\pr{x_1 - \al}}}{\sqrt{1 + \brac{\vp'\pr{x_1 - \al}}^2}}$ and $\om_2^x = \frac{\pr{\vp'\pr{x_1 - \al}, -1}}{\sqrt{1 + \brac{\vp'\pr{x_1 - \al}}^2}}$.
For the duration of this proof, we will drop the superscripts and simply write $\om_1$ and $\om_2$, keeping in mind that the point $x$ is fixed.
For any point $y = \pr{y_1, y_2} \in \R^2$,
\begin{align*}
y \cdot \om_1 &= \frac{y_1 + y_2 \vp'\pr{x_1 - \al}}{\sqrt{1 + \brac{\vp'\pr{x_1 - \al}}^2}} 
\quad \text{ and } \quad 
y \cdot \om_2 = \frac{y_1 \vp'\pr{x_1 - \al} - y_2}{\sqrt{1 + \brac{\vp'\pr{x_1 - \al}}^2}},
\end{align*} 
so that
\begin{align}
\label{basisConversion}
y_1 = \frac{y \cdot \om_1 + \vp'\pr{x_1 - \al} y \cdot \om_2}{\sqrt{1 + \brac{\vp'\pr{x_1 - \al}}^2}}
\quad \text{ and } \quad 
y_2 &=\frac{\vp'\pr{x_1 - \al}  y \cdot \om_1 - y \cdot \om_2}{\sqrt{1 + \brac{\vp'\pr{x_1 - \al}}^2}}.
\end{align}

We show that for all $y \in B \cap N_\al$, either $\abs{\pr{x - y} \cdot \om_1} \le  \frac 1 {60} r_{n+2}^+$ or
$$\abs{\pr{x - y} \cdot \om_1} \le \frac{\la M}{200} \abs{\Phi_\al\pr{x} - \Phi_\al\pr{y}}.$$
Define the set
\begin{equation}
\label{BalDef}
B_\al = \set{y \in B \cap N_\al : \abs{\pr{x - y} \cdot \om_1} > \frac 1 {60} r_{n+2}^+ \text{ and } \abs{\pr{x - y} \cdot \om_1 } > \frac {\la M}{200} \abs{\Phi_\al\pr{x} - \Phi_\al\pr{y}}}.
\end{equation}
If $B_\al = \emptyset$, then we are done. 
So assume to the contrary that $B_\al \ne \emptyset$ and set
\begin{equation}
\label{Req} 
R = \sup \set{ \abs{\pr{x - y} \cdot \om_1}: y \in B_\al}. 
\end{equation}
Since $B_\al$ is assumed to be non-empty, then $R > \frac 1 {60} r_{n+2}^+ > 0$.
Choose $y \in B_\al$ so that $\abs{\pr{x - y} \cdot \om_1} \ge \frac R 2$.

By the definition of $N_\al$ given in Subsection~\ref{ncintro}, we have the following bounds on the curve double-sectors (defined in \eqref{circSectors} and pictured in Figure \ref{Bowties}) about $x$ and $y$:
$$\mu\pr{  \pr{\mathcal{X}_{x, \al}\pr{r, M/10^4} \setminus \mathcal{X}_{x, \al}\pr{r_{n+100}^-, M/10^4}}\cap N_\al} \le N^{-1/100} r/M,$$
and 
$$\mu\pr{  \pr{\mathcal{X}_{y, \al}\pr{r, M/10^4} \setminus \mathcal{X}_{y, \al}\pr{r_{n+100}^-, M/10^4}}\cap N_\al} \le N^{-1/100} r/M$$
for all $r_{n+100}^- \le r \le r_{n-100}^+$. 
Thus, if we denote the union by
\begin{equation}
\label{YrDef}
\begin{aligned}
Y_r &= \brac{\mathcal{X}_{x, \al}\pr{r, M/10^4} \setminus \mathcal{X}_{x, \al}\pr{r_{n+100}^-, M/10^4}} \bigcup  \brac{\mathcal{X}_{y, \al}\pr{r, M/10^4} \setminus \mathcal{X}_{y, \al}\pr{r_{n+100}^-, M/10^4}} \\
&= \bigcup_{e \in \set{x, y}} \set{z \in C_{e,\al'}  \su \R^2 : \abs{\al - \al'} \le \frac {10^4} M \text{ and }  r_{n+100}^- \le \abs{z - e} \le r},
\end{aligned}
\end{equation}
then 
\begin{equation}
\label{small_union} 
\mu(Y_r \cap N_\al) \le 2N^{-1/100} r/M 
\end{equation}
for all $r_{n+100}^- \le r \le r_{n-100}^+$. 
We will use these bounds with the choice $r = 5 \abs{\pr{x - y} \cdot  \om_1}$ and note that this $r$ is in the desired range.

We have arrived at the heart of the argument.  
The plan now is to define an interval $J$ so that $\Phi_\al(x), \Phi_\al\pr{y} \in J$ and then to use the bound in \eqref{small_union} to show that $J$ is of low density relative to $B$ and $\al$ in the sense of Definition~\ref{lowDJ_defn}.  
This will imply that $x \in G_\al$.
However, we assumed that $x\in K_\al := N_\al \backslash G_\al$, so this will give the desired contradiction.  

Set $w =  \frac{100}{\la M} \abs{\pr{x-y} \cdot \om_1}$ and define
$$J = \brac{  \frac{ \Phi_\al(x)+\Phi_\al(y)}{2} \,\,- \,\,w,\,\,  \,\,\frac{ \Phi_\al(x)+\Phi_\al(y)}{2}  \,\, +\,\, w  }.$$ 
Observe that $\Phi_\al(x),\Phi_\al(y) \in J$ if and only if $\abs{\Phi_\al\pr{x} - \Phi_\al\pr{y}} \le \frac {200} {\la M}  \abs{\pr{x-y} \cdot \om_1}$.
Since $y \in B_\al$, see \eqref{BalDef}, then this clearly holds.

We next verify that 
\begin{equation}
\label{containment}
 \Phi_\al^{-1}(5J)\cap B\cap N_\al \,\,\,   \subset \,\,\,Y_r \cap N_\al.
 \end{equation}
Choose an arbitrary point $z \in \Phi_\al^{-1}(5J)\cap B\cap N_\al$.
Since $z$ belongs to the strip  $\Phi_\al^{-1}(5J)$, we have $|\Phi_\al(z) - \frac{\Phi_\al(x) + \Phi_\al\pr{y}}2| \le 5w$. 
It follows from the triangle inequality and the bound from above that
\begin{equation}
\label{diff_proj}
\abs{\Phi_\al(x) - \Phi_\al(z)} 
\le \abs{\frac{\Phi_\al(x) + \Phi_\al\pr{y}}2- \Phi_\al(z)} + \frac{\abs{\Phi_\al\pr{x} - \Phi_\al(y) }}2
\le 6w = \frac{600}{\la M} \abs{\pr{x-y} \cdot \om_1}.
\end{equation}
The same bound holds for $\abs{\Phi_\al(y) - \Phi_\al(z)}$.

If $z \notin B_\al$, then by \eqref{BalDef} either $\abs{\pr{x-z} \cdot \om_1} \le \frac 1 {60} r_{n+2}^+ < \abs{\pr{x-y} \cdot \om_1}$ or
\begin{align*}
\abs{\pr{x-z} \cdot \om_1} \le \frac {\la M}{200} \abs{\Phi_\al\pr{x} - \Phi_\al\pr{z}}
\le 3 \abs{\pr{x-y} \cdot \om_1}.
\end{align*}

If $z \in B_\al$, then $\abs{\pr{x-z} \cdot \om_1} \le R\le 2 \abs{\pr{x-y} \cdot \om_1}$.
In every case, 
\begin{equation}
\label{xzom1}
\abs{\pr{x-z} \cdot \om_1} \le 3 \abs{\pr{x-y} \cdot \om_1},
\end{equation}
and then the triangle inequality shows that
\begin{align}
\label{yzom1}
\abs{\pr{y-z} \cdot \om_1}
&\le \abs{\pr{x-y} \cdot \om_1} + \abs{\pr{x-z} \cdot \om_1}
\le 4 \abs{\pr{x-y} \cdot \om_1}.
\end{align}

Now, either $\abs{\pr{x-z} \cdot \om_1} \geq \frac 1 2 \abs{\pr{x-y} \cdot \om_1}$ or $\abs{\pr{y-z} \cdot \om_1} \geq \frac 1 2 \abs{\pr{x-y} \cdot \om_1}$.
Assume first that $\abs{\pr{x-z} \cdot \om_1} \geq \frac 1 2 \abs{\pr{x-y} \cdot \om_1}$.
We will show that $z$ lies in a small curve sector about $x$. 
In particular, we verify that
\begin{equation}
\label{z in x-sector}
z \in \left(\mathcal{X}_{x, \al}\pr{5 \abs{\pr{x - y} \cdot \om_1}, M/10^4} \setminus \mathcal{X}_{x, \al}\pr{r_{n+100}^-, M/10^4}\right)\cap N_\al,
\end{equation}
which will imply that $z\in Y_r\cap N_\al$ for $r = 5 \abs{\pr{x - y} \cdot \om_1}$.

Since $x, z \in \mathcal{E}$, then $x_1 - \al$ and $z_1 - \al \in I$, so the mean value theorem shows that for some $h \in I$ between $x_1 - \al$ and $z_1 - \al$, 
\begin{align*}
\Phi_\al\pr{x} - \Phi_\al\pr{z}
&= x_2 - \vp\pr{x_1 - \al} - z_2 + \vp\pr{z_1 - \al}
= \pr{x_2 - z_2} - \vp'\pr{h}\pr{x_1 - z_1} \\
&= \frac{\vp'\pr{x_1 - \al}  \pr{x-z} \cdot \om_1 - \pr{x-z} \cdot \om_2}{\sqrt{1 + \brac{\vp'\pr{x_1 - \al}}^2}} 
- \vp'\pr{h}\frac{\pr{x-z} \cdot \om_1 + \vp'\pr{x_1 - \al} \pr{x-z} \cdot \om_2}{\sqrt{1 + \brac{\vp'\pr{x_1 - \al}}^2}},
\end{align*}
where we have used \eqref{basisConversion} with $y$ replaced by $x-z$.
Simplifying this expression shows that
\begin{equation}   
\label{PhialDiff}
\begin{aligned}
\pr{\Phi_\al(x)- \Phi_\al(z)} \sqrt{1 + \brac{\vp'\pr{x_1 - \al}}^2}
&= \brac{\vp'\pr{x_1 - \al}  - \vp'(h)} \pr{x-z} \cdot \om_1  \\
&- \brac{1 + \vp'\pr{x_1 - \al}\vp'(h)} \pr{x-z} \cdot \om_2,
\end{aligned}
\end{equation}
and in particular 
\begin{equation}
\label{expEquation}
\begin{aligned}
\abs{1 + \vp'\pr{x_1 - \al}\vp'(h) }  \abs{ \pr{x-z} \cdot \om_2} 
&\le \abs{\Phi_\al(x)- \Phi_\al(z)} \sqrt{1 + \brac{\vp'\pr{x_1 - \al}}^2} \\
&+\abs{ \vp'\pr{x_1 - \al}  - \vp'(h)} \abs{\pr{x-z} \cdot \om_1 }.
\end{aligned}
\end{equation}

To bound the left-hand-side of \eqref{expEquation} from below, observe that $1 - 2 \la r_n^+ \le \abs{1 + \vp'\pr{x_1 - \al}\vp'(h) } .$  Indeed, 
\begin{align*}
1 - 2 \la r_n^+ 
&\le 1 - \abs{\pr{\vp'(x_1 - \al) - \vp'(h)}\vp'(x_1 - \al)} \\
&\le 1 - \pr{\vp'(x_1 - \al) - \vp'(h)}\vp'(x_1 - \al) \\
&\le 1 - \pr{\vp'(x_1 - \al) - \vp'(h)}\vp'(x_1 - \al) + \brac{\vp'(x_1 - \al) }^2\\
&= 1 + \vp'(h)\vp'(x_1 - \al),
\end{align*}
where we have used
$\abs{\vp'\pr{x_1 - \al} - \vp'\pr{h}}\abs{\vp'\pr{x_1 - \al}} \le \abs{\vp'\pr{x_1 - \al} - \vp'\pr{h} } \le \la \abs{x_1 - z_1} \le 2 \la r_n^+$.

To bound the right-hand-side of \eqref{expEquation} from above, we use \eqref{diff_proj} and the assumption that $\abs{\pr{x-y} \cdot \om_1} \le2\abs{\pr{x-z} \cdot \om_1}$ to see that 
\begin{align*}   
& \abs{\Phi_\al(x)- \Phi_\al(z)} \sqrt{1 + \brac{\vp'\pr{x_1 - \al}}^2}
+ \abs{\vp'\pr{x_1 - \al}  - \vp'(h)} \abs{\pr{x-z} \cdot \om_1}  \\
\le& \frac{600 \sqrt 2}{\la M} \abs{\pr{x-y} \cdot \om_1} + 2 \la r_n^+ \abs{\pr{x-z} \cdot \om_1} \\
\le& \pr{\frac{1200 \sqrt 2}{\la M} + 2 \la r_n^+}\abs{\pr{x-z} \cdot \om_1},
\end{align*}
where we recall that $|\varphi'| \le 1$.

Putting the upper and lower bounds for the expression in \eqref{expEquation} together, we deduce that 
$$(1 - 2 \la r_n^+) \abs{ \pr{x-z} \cdot \om_2} \le \pr{\frac{1200 \sqrt 2}{\la M} + 2 \la r_n^+}\abs{\pr{x-z} \cdot \om_1}.$$

Remarks \ref{lambda assumption} and \ref{n2Smallness} imply that $2 \la r_n^+ \le 2^{-364}$ from which we conclude that
$$\abs{ \pr{x-z} \cdot \om_2} \le \frac{2000}{\la M}\abs{\pr{x-z} \cdot \om_1}.$$
Recalling the definition of $M$ from \eqref{LipConstant}, another application of Remarks \ref{lambda assumption} and \ref{n2Smallness} implies that $\frac{2 \la} M = 2 \cdot 10^{-4} r_{n-200}^- \la \le 2^{-164}10^{-4}$.
Letting $c_1$ be as in Lemma~\ref{bowtiecontainment}, it follows that $\frac{c_1}{\la} = \sqrt{8\brac{1 + \pr{1 + \frac{2\la}{M}}^2}} \le 5.$    
Therefore,
\begin{align*}   
\abs{\pr{x-z} \cdot \om_2} 
&\le \frac {2000} {\la M} \abs{\pr{x-z} \cdot \om_1} 
= \frac{c_1}{\la}\frac {2000} {c_1 M} \abs{\pr{x-z} \cdot \om_1} 
\le \frac {10^4} {c_1 M} \abs{x-z}.
\end{align*}
Moreover, by the assumption that $\abs{\pr{z-x} \cdot \om_1} \ge \frac 1 2 \abs{\pr{y-x} \cdot \om_1}$, since $y\in B_\al$, and by the scale-separation assumption, we have
$$\abs{x - z} \ge \abs{\pr{z-x} \cdot \om_1} \ge \frac 1 2 \abs{\pr{y-x} \cdot \om_1} > \frac{1}{120} r_{n+2}^+ > r_{n+100}^-$$
and
\begin{align*}
\abs{x - z} 
= \sqrt{\abs{\pr{x - z} \cdot \om_1}^2 + \abs{\pr{x - z} \cdot \om_2}^2 }
\le \sqrt{1 + \pr{\frac{2000}{\la M}}^2 } \abs{\pr{x - z} \cdot \om_1}
\le 5 \abs{\pr{x - y} \cdot \om_1},
\end{align*}
where we have used \eqref{xzom1}.
In particular, $\disp z \in X_{x, \om_2}\pr{5 \abs{\pr{x - y} \cdot \om_1}, c_1 M/ 10^4} \setminus B_{r_{n+100}^-}(x)$.
Since $\la \le 2^{35}$, then $\la^2 10^5 < 2^{87} < 2^{199}$, and we see that 
$$r = 5\abs{\pr{x - y} \cdot \om_1} \le 5 \abs{x-y} \le 10 r_n^+ \le \frac{r_{n - 200}^-}{2\la^2 10^4} = \frac{1}{2 \la^2 M}.$$
Furthermore, since $M > 10^5$ (see Definition~\ref{CircPairs}) and $r = 5\abs{\pr{x - y} \cdot \om_1} \le 5 \abs{x-y} \le 10 r_n^+ < 2^{-196}$, then
$$\de = 10^{-5} + 2^{-100} \ge \frac 1 M + r.$$
Then we can use the second containment in Lemma~\ref{bowtiecontainment} to deduce that \eqref{z in x-sector} holds.

On the other hand, if $\abs{\pr{y-z} \cdot \om_1} \geq \frac 1 2 \abs{\pr{x-y} \cdot \om_1}$, then we may repeat the arguments from above with $y$ in place of $x$ to show that $\abs{\pr{y-z} \cdot \om_2} \le \frac {10^4} {c_1 M} \abs{y-z}$, $\abs{y - z} \ge r_{n+100}^-$, and $\abs{y - z} \le 5 \abs{\pr{x - y} \cdot \om_1}$, where the last inequality uses \eqref{yzom1}.
Then we use Lemma~\ref{bowtiecontainment} again to deduce that 
$$z \in \pr{\mathcal{X}_{y, \al}\pr{5 \abs{\pr{x - y} \cdot \om_1}, M/10^4} \setminus \mathcal{X}_{y, \al}\pr{r_{n+100}^-, M/10^4}}\cap N_\al.$$
It follows that \eqref{containment} holds with $r = 5 \abs{\pr{x - y} \cdot \om_1}$.
Therefore,
\begin{align*}
\mu\pr{\Phi_\al^{-1}(5J)\cap B\cap N_\al} 
&\le \mu\pr{Y_{5 \abs{\pr{x - y} \cdot \om_1}} \cap N_\al}
\le 10 N^{-1/100} \abs{\pr{x - y} \cdot \om_1} /M
= \frac \la {20} N^{-1/100} \abs{J} \\
&\le 10^{10} N^{-1/100} \abs{J},
\end{align*}
where we have used that $\la \le 2^{35} \le 2 \cdot 10^{11}$.
This shows that $J$ is low density relative to $B$ and $\al$ from which we deduce that $x \in G_\al$.
As this is a contradiction to the assumption that $x \in K_\al$, the proof is complete.
\end{proof}

Now we use a Taylor approximation to straighten out the previous result. 

\begin{cor}[Standard cones]
\label{LipCor}  
Suppose $x\in  B\cap K_{\al}$.  
For each $y \in B\cap K_{\al}$, it holds that
\begin{equation}
\label{LipIneq2}
\abs{\pr{x - y} \cdot \om^x_1 } \le \frac {\la M} {50} \abs{\pr{x - y} \cdot \om^x_2} + \frac 1 {30} r_{n+2}^+. 
\end{equation}
\end{cor}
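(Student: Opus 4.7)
The plan is to substitute an explicit formula for $\Phi_\al(x)-\Phi_\al(y)$ into the bound of Lemma~\ref{PhialLipLemm} and then absorb a small term back into the left-hand side. The formula in question is already derived inside the proof of Lemma~\ref{PhialLipLemm}: applying the mean value theorem to $\vp$ and using the change-of-basis identities \eqref{basisConversion} with $y$ in place of $x-z$ (and with $y$ now playing the role that $z$ played there) produces an analogue of \eqref{PhialDiff}, namely
\begin{equation*}
\bigl(\Phi_\al(x)-\Phi_\al(y)\bigr)\sqrt{1+[\vp'(x_1-\al)]^2}
=\bigl[\vp'(x_1-\al)-\vp'(h)\bigr](x-y)\cdot\om^x_1
-\bigl[1+\vp'(x_1-\al)\vp'(h)\bigr](x-y)\cdot\om^x_2,
\end{equation*}
where $h$ lies between $x_1-\al$ and $y_1-\al$.

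First, I would bound the two coefficients on the right. Since $\vp'$ is $\la$-Lipschitz and $x,y\in B$ with $r(B)\le r_n^+$, we have $|\vp'(x_1-\al)-\vp'(h)|\le \la|x_1-y_1|\le 2\la r_n^+$; and since $|\vp'|\le 1$, we have $|1+\vp'(x_1-\al)\vp'(h)|\le 2$. Using also $\sqrt{1+[\vp'(x_1-\al)]^2}\ge 1$, this yields
\begin{equation*}
\abs{\Phi_\al(x)-\Phi_\al(y)} \le 2\la r_n^+\, \abs{(x-y)\cdot\om^x_1} + 2\abs{(x-y)\cdot\om^x_2}.
\end{equation*}

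Next, I would feed this into Lemma~\ref{PhialLipLemm} to get
\begin{equation*}
\abs{(x-y)\cdot\om^x_1}\Bigl(1-\tfrac{\la^2 M r_n^+}{100}\Bigr)
\le \tfrac{\la M}{100}\abs{(x-y)\cdot\om^x_2}+\tfrac{1}{60}r_{n+2}^+.
\end{equation*}
The only remaining point is to check that $\la^2 M r_n^+/100$ is small enough that, after dividing through, the constants degrade from $\la M/100$ and $1/60$ to at most $\la M/50$ and $1/30$. By the definition \eqref{LipConstant} of $M$ and iterated application of the separation-of-scales assumption \eqref{Sos}, one has $r_n^+\le 2^{-200}r_{n-200}^-$, so that $\la^2 M r_n^+\le 10^4\la^2\, 2^{-200}$. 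Combined with $\la\le 2^{35}$ (Remark~\ref{lambda assumption}), this is far smaller than $1$; in particular $1-\la^2 M r_n^+/100\ge 1/2$, which gives the required constants.

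The only real work is the algebraic manipulation; the main thing to be careful about is choosing a basis (the $x$-based basis $(\om^x_1,\om^x_2)$) in which the mean-value-theorem expansion of $\Phi_\al(x)-\Phi_\al(y)$ is genuinely useful, and then verifying that the quadratic-in-$r_n^+$ cross term can be absorbed using the sharp separation between $r_n^+$ and $r_{n-200}^-$ built into $M$. No further geometric input is needed beyond Lemma~\ref{PhialLipLemm} and the standing smallness conventions of Remarks~\ref{lambda assumption} and \ref{n2Smallness}.
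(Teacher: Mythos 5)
Your proposal is correct and follows essentially the same route as the paper's proof: expand $\Phi_\al(x)-\Phi_\al(y)$ via the mean value theorem in the $(\om_1^x,\om_2^x)$ basis, bound it by $2\la r_n^+\abs{(x-y)\cdot\om_1^x}+2\abs{(x-y)\cdot\om_2^x}$, substitute into Lemma~\ref{PhialLipLemm}, and absorb the $\frac{\la^2 M r_n^+}{100}$ self-term using \eqref{LipConstant}, Remark~\ref{lambda assumption}, and the iterated separation of scales \eqref{Sos}. The numerical bookkeeping (factor at least $\tfrac12$ after absorption, yielding the constants $\tfrac{\la M}{50}$ and $\tfrac{1}{30}$) matches the paper's.
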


\begin{proof}
As in the previous proof, we will drop the superscripts and simply write $\om_1$ and $\om_2$, keeping in mind that the point $x$ is fixed.
As in the display preceding \eqref{PhialDiff}, there exists an $h \in I$ between $x_1 - \al$ and $y_1 - \al$ so that
\begin{align*}
\Phi_\al(x)- \Phi_\al(y)
= \frac{\vp'\pr{x_1 - \al}  - \vp'(h)}{\sqrt{1 + \brac{\vp'\pr{x_1 - \al}}^2}} \pr{x-y} \cdot \om_1  
- \frac{1 + \vp'\pr{x_1 - \al}\vp'(h)}{\sqrt{1 + \brac{\vp'\pr{x_1 - \al}}^2}} \pr{x-y} \cdot \om_2
\end{align*}
and then, by the Lipschitz condition on $\varphi'$ in \eqref{biLipCondition},
\begin{align*}
\abs{\Phi_\al(x)- \Phi_\al(y)}
&\le \la \abs{x_1 - y_1} \abs{\pr{x-y} \cdot \om_1} + 2 \abs{\pr{x-y} \cdot \om_2}.
\end{align*}
Substituting this bound into \eqref{LipIneq} and bounding $|x-y| \le 2r_n^+$ shows that 
\begin{align*}
\abs{\pr{x - y} \cdot \om_1 } 
\le \frac {\la M}{100} \la r_n^+ \abs{\pr{x-y} \cdot \om_1} + \frac{\la M}{100} \abs{\pr{x-y} \cdot \om_2} + \frac 1 {60} r_{n+2}^+. 
\end{align*}
Observe that $\frac M {100} \la^2 r_n^+ = 10^2 \la^2 \frac{r_n^+}{r_{n-200}^-} \le 2^{-123}  \ll \frac 1 2$, where we have used \eqref{LipConstant}, Remark \ref{lambda assumption}, and \eqref{Sos}.
Thus, we may rearrange to reach the conclusion.
\end{proof}

And here we show that this Lipschitz-result is independent of the basis vector we choose.
See Figure~\ref{ChangeofBasis} for an illustration of this result. 

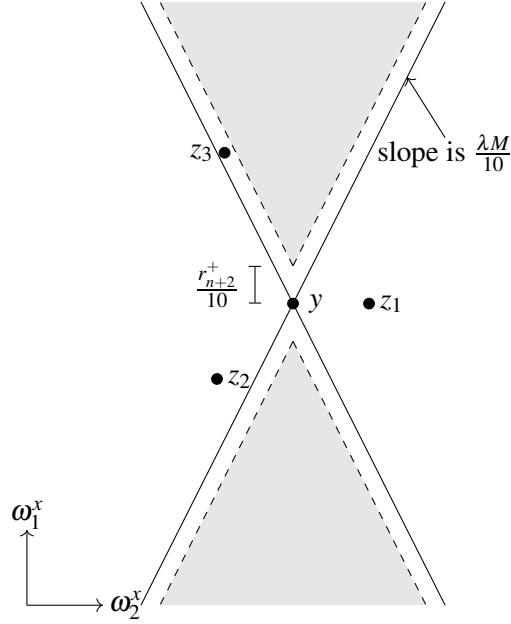
\begin{figure}[H]
\begin{tikzpicture}
\draw[->] (-0.5,-2) -- (-0.5, -1);
\draw[color=black] (-0.5, -0.8) node {$\om_1^x$};
\draw[->] (-0.5,-2) -- (0.5, -2);
\draw[color=black] (0.8, -2) node {$\om_2^x$};
\draw [fill=black] (3, 2) circle (2 pt);
\draw[color=black] (3.3, 2) node {$y$};
\draw [fill=black] (4, 2) circle (2 pt);
\draw[color=black] (4.3, 2) node {$z_1$};
\draw [fill=black] (2, 1) circle (2 pt);
\draw[color=black] (2.3, 1) node {$z_2$};
\draw [fill=black] (2.1, 4) circle (2 pt);
\draw[color=black] (1.8, 4) node {$z_3$};
\draw[color=black] (5, 4) node {$\text{slope is } \frac{\la M}{10}$};
\draw[->] (5, 4.2) -- (4.5, 5);
\draw (1, -2) -- (5, 6);
\draw (1, 6) -- (5, -2);

\draw[|-|] (2.5,2) -- (2.5, 2.5);
\draw[color=black] (2, 2.25) node {$\frac{r_{n+2}^+}{10}$};

\fill[black, opacity = 0.1]
 (1.25, -2) -- (3, 1.5) --
(3, 1.5) -- (4.75, -2) --
(4.75, -2) -- (1.25, -2) -- 
   cycle;
\fill[black, opacity = 0.1]
 (1.25, 6) -- (3, 2.5) --
(3, 2.5) -- (4.75, 6) --
(4.75, 6) -- (1.25, 6) -- 
   cycle;
\draw[dashed] (3, 2.5) -- (4.75, 6);
\draw[dashed] (3, 2.5) -- (1.25, 6);
\draw[dashed] (3, 1.5) -- (4.75, -2);
\draw[dashed] (3, 1.5) -- (1.25, -2);
\end{tikzpicture}
\centering
\caption{Corollary~\ref{LipCor2} shows that any point $z\in K_\al$ can lie in the non-shaded region around $y$.}
\label{ChangeofBasis}
\end{figure}
  
\begin{cor}[Standard cones with arbitrary basis]
\label{LipCor2}  
Fix $x\in  B\cap K_{\al}$.  
For each $y, z \in B\cap K_{\al}$, it holds that
\begin{equation}
\label{LipIneq3}
\abs{\pr{y - z} \cdot \om^x_1 } \le \frac{\la M}{10} \abs{\pr{y - z} \cdot \om^x_2} + \frac 1 {10} r_{n+2}^+. 
\end{equation}
\end{cor}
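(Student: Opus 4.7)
The plan is to reduce to Corollary~\ref{LipCor} applied with $y$ itself playing the role of the base point, which is legal since $y \in B \cap K_\al$ and the conclusion of Corollary~\ref{LipCor} holds for any choice of base in $B \cap K_\al$. This yields
\begin{equation*}
\abs{(y-z) \cdot \om^y_1} \le \frac{\la M}{50} \abs{(y-z) \cdot \om^y_2} + \frac{r_{n+2}^+}{30}.
\end{equation*}
What remains is to convert this bound from the basis $(\om^y_1, \om^y_2)$ into the basis $(\om^x_1, \om^x_2)$ by controlling the rotation angle between them and then performing an elementary change of coordinates.

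To estimate the angle $\te$ between $\om^x_1$ and $\om^y_1$, recall that $\om^x_1 = (1, \vp'(x_1 - \al))/\sqrt{1 + [\vp'(x_1-\al)]^2}$ and similarly for $\om^y_1$. Since the map $t \mapsto (1, t)/\sqrt{1+t^2}$ is $1$-Lipschitz and $\vp'$ is $\la$-Lipschitz,
\begin{equation*}
\abs{\om^y_1 - \om^x_1} \le \la \abs{y_1 - x_1} \le 2\la r_n^+,
\end{equation*}
whence $\abs{\sin\te} \le \abs{\om^y_1 - \om^x_1} \le 2\la r_n^+$. Combining the definition $M = 10^4/r^-_{n-200}$ from \eqref{LipConstant} with Remark~\ref{lambda assumption} and the separation of scales \eqref{Sos} (which yield $r_n^+ \le 2^{-200} r^-_{n-200}$), one checks that $\frac{\la M}{50}\abs{\sin\te}$ is comparable to $\la^2\, r_n^+/r^-_{n-200} \le 2^{70 - 200}$, which is negligible. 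In particular $\cos\te$ may be taken arbitrarily close to $1$.

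With this in hand, I will write $\om^y_1 = \cos\te\,\om^x_1 - \sin\te\,\om^x_2$ and $\om^y_2 = \sin\te\,\om^x_1 + \cos\te\,\om^x_2$, set $a = (y - z) \cdot \om^x_1$ and $b = (y - z) \cdot \om^x_2$, and substitute into the inequality from Corollary~\ref{LipCor} to obtain
\begin{equation*}
\abs{a\cos\te - b\sin\te} \le \frac{\la M}{50} \abs{a\sin\te + b\cos\te} + \frac{r_{n+2}^+}{30}.
\end{equation*}
The triangle inequality then rearranges this into
\begin{equation*}
\abs{a}\pr{\cos\te - \tfrac{\la M}{50}\abs{\sin\te}} \le \abs{b}\pr{\abs{\sin\te} + \tfrac{\la M}{50}\cos\te} + \tfrac{r_{n+2}^+}{30},
\end{equation*}
and since $\abs{\sin\te}$ is minuscule relative to $\frac{\la M}{50}\cos\te$ and to $\cos\te$ itself, the coefficient of $\abs{a}$ is essentially $\cos\te$ (hence close to $1$) while the coefficient of $\abs{b}$ is essentially $\frac{\la M}{50}$. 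Dividing through gives the claimed bound $\abs{a} \le \frac{\la M}{10} \abs{b} + \frac{r_{n+2}^+}{10}$, where the slack in the constants $\frac{1}{50} \to \frac{1}{10}$ and $\frac{1}{30} \to \frac{1}{10}$ comfortably absorbs the small contributions.

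The main technical hurdle is the simultaneous handling of both error corrections (the multiplicative factor $\cos\te \approx 1$ in front of $\abs{a}$ and the additive perturbation $\abs{\sin\te}\cdot\abs{b}$). Given the aggressive scale separation and the modest value of $\la$, this amounts to routine bookkeeping rather than a genuine obstacle; no new structural ideas beyond Corollary~\ref{LipCor} and the Lipschitz continuity of the tangent direction of $\mathcal{C}$ are required.
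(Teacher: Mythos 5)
Your proposal is correct and follows essentially the same strategy as the paper: apply Corollary~\ref{LipCor} with base point $y$ to get a bound in the $(\om_1^y,\om_2^y)$ frame, then transfer that bound to the $(\om_1^x,\om_2^x)$ frame by estimating how far apart the two orthonormal frames are, exploiting the $\la$-Lipschitz continuity of $\vp'$ and the aggressive separation of scales ($r_n^+ \le 2^{-200}r_{n-200}^-$, $\la \le 2^{35}$) to absorb the errors. The only cosmetic difference is in the bookkeeping of the change of basis: the paper substitutes the explicit coordinate formulas for $\om_1^x$, $\om_1^y$, $\om_2^y$ directly and tracks the ensuing error terms (picking up a crude $\sqrt{2}$ from bounding a ratio of normalizations), whereas you recognize that $(\om_1^y,\om_2^y)$ is an exact rotation of $(\om_1^x,\om_2^x)$ by a small angle $\te$ with $\abs{\sin\te}\le\abs{\om_1^y-\om_1^x}\le 2\la r_n^+$, and work trigonometrically — a slicker packaging that yields slightly tighter constants, but the same underlying argument.
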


\begin{proof}
Recall that for any point $x \in B \cap N_\al$, $\om_1^x = \frac{\pr{1, \vp'\pr{x_1 - \al}}}{\sqrt{1 + \brac{\vp'\pr{x_1 - \al}}^2}}$ and $\om_2^x = \frac{\pr{\vp'\pr{x_1 - \al}, -1}}{\sqrt{1 + \brac{\vp'\pr{x_1 - \al}}^2}}$.
As we will be switching bases, we maintain the superscript notation.
Observe that
\begin{equation}
\label{xyz1Eq}
\begin{aligned}
\abs{\pr{y - z} \cdot \om^x_1 }
&= \abs{\pr{y - z} \cdot  \frac{\pr{1, \vp'\pr{y_1 - \al} + \vp'\pr{x_1 - \al} - \vp'\pr{y_1 - \al}}}{\sqrt{1 + \brac{\vp'\pr{x_1 - \al}}^2}} } \\
&\le \abs{\pr{y - z} \cdot \om_1^y } \sqrt{\frac{1 + \brac{\vp'\pr{y_1 - \al}}^2}{1 + \brac{\vp'\pr{x_1 - \al}}^2}}
+ \abs{\pr{y - z} \cdot  \frac{\pr{0, \vp'\pr{x_1 - \al} - \vp'\pr{y_1 - \al}}}{\sqrt{1 + \brac{\vp'\pr{x_1 - \al}}^2}} } \\
&\le \sqrt 2 \abs{\pr{y - z} \cdot \om_1^y }
+ \la \abs{x_1 - y_1} \abs{y_2 - z_2} \\
&\le \sqrt 2 \abs{\pr{y - z} \cdot \om_1^y }
+ 2 \la r_n^+ \abs{\frac{\vp'\pr{x_1 - \al}  \pr{y-z} \cdot \om^x_1 - \pr{y-z} \cdot \om^x_2}{\sqrt{1 + \brac{\vp'\pr{x_1 - \al}}^2}}} \\
&\le \sqrt 2 \abs{\pr{y - z} \cdot \om_1^y }
+ 2 \la r_n^+ \abs{\pr{y-z} \cdot \om^x_1}
+ 2 \la r_n^+ \abs{\pr{y-z} \cdot \om^x_2},\\
\end{aligned}
\end{equation}
where we have used \eqref{basisConversion} with $y$ replaced by $y - z$ to rewrite $y_2 - z_2$.
Combining this with Corollary~\ref{LipCor} shows that
\begin{equation*}
\begin{aligned}
\abs{\pr{y - z} \cdot \om^x_1 }
&\le \sqrt 2 \pr{\frac{\la M}{50} \abs{\pr{y - z} \cdot \om^y_2} + \frac 1 {30} r_{n+2}^+} 
+ 2 \la r_n^+ \abs{\pr{y-z} \cdot \om^x_1}
+ 2 \la r_n^+ \abs{\pr{y-z} \cdot \om^x_2} \\
&\le  \frac{\sqrt 2 \la M} {50} \abs{\pr{y - z} \cdot \om^y_2} 
+ 2 \la r_n^+ \abs{\pr{y-z} \cdot \om^x_1}
+ 2 \la r_n^+ \abs{\pr{y-z} \cdot \om^x_2}
+ \frac{1}{20} r_{n+2}^+.
\end{aligned}
\end{equation*}
A similar computation to \eqref{xyz1Eq} shows that
\begin{equation*}
\begin{aligned}
\abs{\pr{y - z} \cdot \om^y_2 }
&= \abs{\pr{y - z} \cdot  \frac{\pr{ \vp'\pr{x_1 - \al} + \vp'\pr{y_1 - \al} - \vp'\pr{x_1 - \al}, -1}}{\sqrt{1 + \brac{\vp'\pr{y_1 - \al}}^2}} } \\
&\le \abs{\pr{y - z} \cdot \om_2^x } \sqrt{\frac{1 + \brac{\vp'\pr{x_1 - \al}}^2}{1 + \brac{\vp'\pr{y_1 - \al}}^2}}
+ \abs{\pr{y - z} \cdot  \frac{\pr{\vp'\pr{y_1 - \al} - \vp'\pr{x_1 - \al}, 0}}{\sqrt{1 + \brac{\vp'\pr{y_1 - \al}}^2}} } \\
&\le \sqrt 2 \abs{\pr{y - z} \cdot \om_2^x }
+ 2 \la r_n^+ \abs{\pr{y-z} \cdot \om^x_1}
+ 2 \la r_n^+ \abs{\pr{y-z} \cdot \om^x_2}.
\end{aligned}
\end{equation*}
Substituting this bound into the previous expression gives
\begin{equation*}
\begin{aligned}
\abs{\pr{y - z} \cdot \om^x_1 }
&\le  \frac{\sqrt 2 \la M} {50} \pr{\sqrt 2 \abs{\pr{y - z} \cdot \om_2^x }
+ 2 \la r_n^+ \abs{\pr{y-z} \cdot \om^x_1}
+ 2 \la r_n^+ \abs{\pr{y-z} \cdot \om^x_2}} \\
&+ 2 \la r_n^+ \abs{\pr{y-z} \cdot \om^x_1}
+ 2 \la r_n^+ \abs{\pr{y-z} \cdot \om^x_2}
+ \frac{1}{20} r_{n+2}^+ \\
&\le \brac{\frac{\la M} {25} + 2 \la r_n^+ \pr{  \frac{\la M} {35} + 1}} \abs{\pr{y-z} \cdot \om^x_2} 
+ 2 \la r_n^+ \pr{\frac{\la M} {35} + 1} \abs{\pr{y-z} \cdot \om^x_1} 
+ \frac{1}{20} r_{n+2}^+.
\end{aligned}
\end{equation*}
 Observe that $2 \la r_n^+ \pr{ \frac{\la M} {35} + 1} = 2 \la r_n^+ \pr{ \frac{\la 10^4} {35 r_{n-200}^-} + 1} \le 2^{10} \la^2  \frac{r_n^+} {r_{n-200}^-} \le  2^{-120}$, where we have used \eqref{LipConstant}, Remark \ref{lambda assumption}, and \eqref{Sos}.
In particular, $2 \la r_n^+ \pr{ \frac{\la M} {35} + 1}  \le \frac 1 2 \le \frac {\la M}{100}$, and then
\begin{equation*}
\begin{aligned}
\abs{\pr{y - z} \cdot \om^x_1 }
&\le \frac{\la M}{10} \abs{\pr{y-z} \cdot \om^x_2} 
+ \frac {1}{10} r_{n+2}^+,
\end{aligned}
\end{equation*}
as required.
\end{proof}

We now define a piecewise linear function and use Corollary~\ref{LipCor2} to show that it is Lipschitz with an appropriate constant. 
Recall that $B$ is a ball with radius $r \in \brac{r_n^-, r_n^+}$.
For fixed $x \in B \cap K_\al$, let $\om_1 = \om_1^x$ and $\om_2 = \om_2^x$, so that $\pr{\om_1, \om_2}$ defines a frame on $B$.
Set $w = \frac{7 r_{n+2}^+}{4 \la M}$, then divide $B$ into strips $S$ of width $w$ that are parallel to the direction $\om_1$.
There will be on the order of $r(B)/w$ strips in this collection. 
Next we select and name a subset of the strips from the collection so that at least half of them are non-empty.

When we say that a strip $S$ is \textit{to the left of} a strip $T$, we mean with respect to the direction $\om_2$.
That is, for any $y \in S$ and any $z \in T$, $y \cdot \om_2 \le z \cdot \om_2$.
Similarly, we say that $T$ is \textit{to the right of} $S$ if $S$ is to the left of $T$.
If we say that $S$ \textit{abuts} $T$, then we mean that $S$ and $T$ are adjacent strips, meaning that they share a boundary line that runs parallel to $\om_1$.

Starting from the leftmost strip in our collection and moving to the right, let $S_1^-$ denote the first strip for which $S_1^- \cap K_\al \ne \emptyset$.
Let $S_1^+$ denote the strip that abuts and is to the right of $S_1^-$.
Let $S_2^-$ denote the next strip that is to the right of $S_1^+$ for which $S_2^- \cap K_\al \ne \emptyset$. 
Set $S_2^+$ to be the strip that abuts and is to the right of $S_2^-$.
Continuing on, we have a collection of strips $\set{S_i^-}_{i=1}^N$ for which $S_i^- \cap K_\al \ne \emptyset$ for each $i = 1, \ldots, N$.
We also have a collection $\set{S_i^+}_{i=1}^N$ so that $S_i^+$ abuts and is to the right of $S_i^-$ for each $i = 1, \ldots, N$.

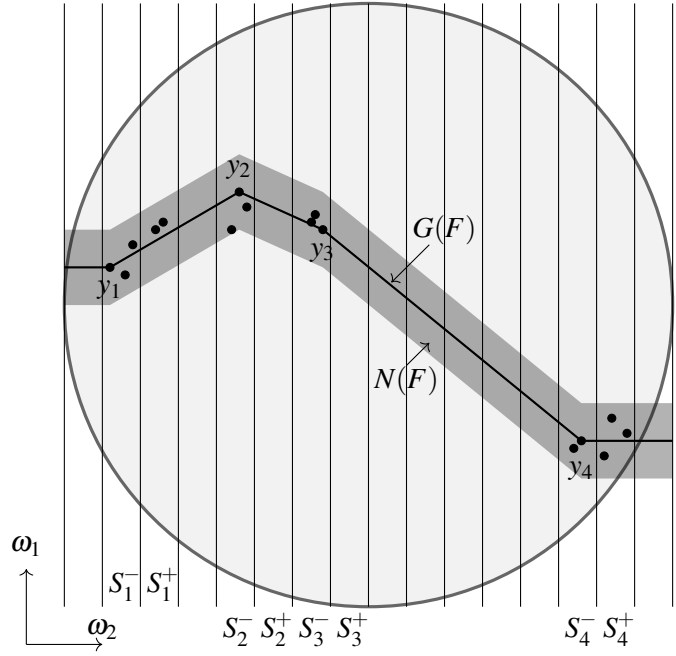
\begin{figure}[H]
\begin{tikzpicture}
\draw[->] (-4.5,-4.5) -- (-4.5, -3.5);
\draw[color=black] (-4.5, -3.2) node {$\om_1$};
\draw[->] (-4.5,-4.5) -- (-3.5, -4.5);
\draw[color=black] (-3.5, -4.3) node {$\om_2$};
\filldraw[color=black!60, fill=black!5, very thick](0,0) circle (4);
\draw (-4,-4) -- (-4, 4);
\draw (-3.5,-4) -- (-3.5, 4);
\draw (-3,-4) -- (-3, 4);
\draw (-2.5,-4) -- (-2.5, 4);
\draw (-2,-4) -- (-2, 4);
\draw (-1.5,-4) -- (-1.5, 4);
\draw (-1,-4) -- (-1, 4);
\draw (-0.5,-4) -- (-0.5, 4);
\draw (0,-4) -- (0, 4);
\draw (4,-4) -- (4, 4);
\draw (3.5,-4) -- (3.5, 4);
\draw (3,-4) -- (3, 4);
\draw (2.5,-4) -- (2.5, 4);
\draw (2,-4) -- (2, 4);
\draw (1.5,-4) -- (1.5, 4);
\draw (1,-4) -- (1, 4);
\draw (0.5,-4) -- (0.5, 4);

\fill[black, opacity = 0.3]
(-4, 1) -- (-3.4, 1) --
(-3.4, 1) -- (-1.7, 2) --
(-1.7, 2) -- (-0.6, 1.5) -- 
(-0.6, 1.5) -- (2.8, -1.3) --
(2.8, -1.3) -- (4, -1.3) --
(4, -1.3) -- (4, -2.3) --
(4, -2.3) -- (2.8, -2.3) --
(2.8, -2.3) -- (-0.6, 0.5) --
(-0.6, 0.5) -- (-1.7, 1) --
(-3.4, 0) -- (-4, 0) --
(-4, 0) -- (-4, 1) -- 
   cycle;

\draw [fill=black] (-3.4, 0.5) circle (1.5 pt);
\draw [fill=black] (-3.1, 0.8) circle (1.5 pt);
\draw [fill=black] (-3.2, 0.4) circle (1.5 pt);
\draw [fill=black] (-2.7, 1.1) circle (1.5 pt);
\draw [fill=black] (-2.8, 1) circle (1.5 pt);
\draw [fill=black] (-1.8, 1) circle (1.5 pt);
\draw [fill=black] (-1.7, 1.5) circle (1.5 pt);
\draw [fill=black] (-1.6, 1.3) circle (1.5 pt);
\draw [fill=black] (-0.7, 1.2) circle (1.5 pt);
\draw [fill=black] (-0.75, 1.1) circle (1.5 pt);
\draw [fill=black] (-0.6, 1) circle (1.5 pt);
\draw [fill=black] (2.8, -1.8) circle (1.5 pt);
\draw [fill=black] (2.7, -1.9) circle (1.5 pt);
\draw [fill=black] (3.1, -2) circle (1.5 pt);
\draw [fill=black] (3.4, -1.7) circle (1.5 pt);
\draw [fill=black] (3.2, -1.5) circle (1.5 pt);

\draw[fill=black, thick] (-4, 0.5) -- (-3.4, 0.5);
\draw[fill=black, thick] (-3.4, 0.5) -- (-1.7, 1.5);
\draw[fill=black, thick] (-1.7, 1.5) -- (-0.6, 1);
\draw[fill=black, thick] (-0.6, 1) -- (2.8, -1.8);
\draw[fill=black, thick] (2.8, -1.8) -- (4, -1.8);

\draw[color=black] (-3.4, 0.2) node {$y_1$};
\draw[color=black] (-1.7, 1.8) node {$y_2$};
\draw[color=black] (-0.6, 0.7) node {$y_3$};
\draw[color=black] (2.8, -2.2) node {$y_4$};

\draw[color=black] (1, 1) node {$G(F)$};
\draw[->] (0.8,0.8) -- (0.3, 0.3);

\draw[color=black] (0.5, -1) node {$N(F)$};
\draw[->] (0.5, -0.8) -- (0.8, -0.5);

\draw[color=black] (-3.2, -3.7) node {$S_1^-$};
\draw[color=black] (-2.7, -3.7) node {$S_1^+$};
\draw[color=black] (-1.7, -4.3) node {$S_2^-$};
\draw[color=black] (-1.2, -4.3) node {$S_2^+$};
\draw[color=black] (-0.7, -4.3) node {$S_3^-$};
\draw[color=black] (-0.2, -4.3) node {$S_3^+$};
\draw[color=black] (2.8, -4.3) node {$S_4^-$};
\draw[color=black] (3.3, -4.3) node {$S_4^+$};
\end{tikzpicture}
\centering
\caption{When $B$ is divided into strips, each $S_i^-$ is non-empty, while $S_i^+$ is immediately to the right and could be empty.
Points $y_i$ are chosen from each $S_i^-$, then connected to make the graph of $F$, $G\pr{F}$.
The neighborhood of the graph, $N\pr{F}$, is the shaded region.}
\label{WedgeCor}
\end{figure}
\leavevmode

Now we use a selection of points from $\set{S_i^-}_{i=1}^N$ to define a piecewise linear function.
For each $i = 1, \ldots, N$, choose a point $y_i \in S_i^- \cap K_\al$.
By connecting these points with straight lines, we define a piecewise linear function over the interval $U := B \cdot \om_2$:
\begin{equation}
\label{FDefn}
F\pr{t} = \left\{\begin{array}{ll}  
y_1 \cdot \om_1 & t \le y_1 \cdot \om_2 \\
y_i \cdot \om_1 + \frac{\pr{y_{i+1} - y_i} \cdot \om_1}{\pr{y_{i+1} - y_i} \cdot \om_2} \pr{t - y_i \cdot \om_2} & y_i \cdot \om_2 < t \le y_{i+1} \cdot \om_2 \\
y_N \cdot \om_1 & t > y_N \cdot \om_2
\end{array}\right..
\end{equation}
Note that, by construction, $\pr{y_{i+1} - y_i} \cdot \om_2 \ge w$ for each $i = 2, \ldots, N$.
We use $m_i$ to denote the slope over the $i^{\rm{th}}$ interval.
That is,
\begin{equation}
\label{miDefn}
m_i= \frac{\pr{y_{i+1} - y_i} \cdot \om_1}{\pr{y_{i+1} - y_i} \cdot \om_2}.
\end{equation}
  
We first observe that $F$ is Lipschitz.

\begin{lem}[Lipschitz function]
\label{FLip}
The function $F$ defined in \eqref{FDefn} is Lipschitz continuous with constant at most $\frac{11\la M}{70}$.
\end{lem}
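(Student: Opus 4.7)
Since $F$ is piecewise linear, is constant on the intervals $t \le y_1 \cdot \om_2$ and $t > y_N \cdot \om_2$, and is continuous throughout $U$, it suffices to bound the absolute value of the slope $m_i$ defined in \eqref{miDefn} on each linear piece by $\frac{11\la M}{70}$, uniformly in $i = 1, \ldots, N-1$. The plan is to apply the standard-cone bound of Corollary~\ref{LipCor2} to the two selected points $y_i, y_{i+1} \in B \cap K_\al$ and then use the construction of the strips to absorb the additive error term coming from $r_{n+2}^+$.

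The key step is the following. Applying Corollary~\ref{LipCor2} (with $x \in B \cap K_\al$ as already fixed and $y = y_{i+1}$, $z = y_i$) immediately yields
\begin{equation*}
\abs{(y_{i+1} - y_i) \cdot \om_1} \le \frac{\la M}{10}\abs{(y_{i+1} - y_i) \cdot \om_2} + \frac{1}{10}r_{n+2}^+.
\end{equation*}
The definition of the selection procedure guarantees that between $S_i^-$ and $S_{i+1}^-$ there lies the strip $S_i^+$ of width $w = \frac{7 r_{n+2}^+}{4\la M}$, so $(y_{i+1} - y_i)\cdot \om_2 \ge w$ and therefore
\begin{equation*}
\tfrac{1}{10}r_{n+2}^+ = \tfrac{1}{10} \cdot \tfrac{4\la M}{7}w \le \tfrac{4\la M}{70}\,(y_{i+1} - y_i)\cdot \om_2.
\end{equation*}
Dividing the cone inequality by $(y_{i+1} - y_i)\cdot \om_2 > 0$ and substituting this bound on the error term gives
\begin{equation*}
|m_i| \le \frac{\la M}{10} + \frac{4\la M}{70} = \frac{7\la M + 4\la M}{70} = \frac{11\la M}{70},
\end{equation*}
which is the desired uniform bound on the slopes and completes the argument.

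There is no real obstacle here: the work was done in Corollary~\ref{LipCor2}. The only subtle point is that the additive $r_{n+2}^+$ in the cone inequality would ruin a slope bound if two consecutive $y_i$'s could be arbitrarily close in the $\om_2$-direction, and this is precisely why the intermediate ``buffer'' strips $S_i^+$ were inserted in the construction and why $w$ was chosen proportional to $r_{n+2}^+/(\la M)$; the constant $\frac{7}{4}$ in the definition of $w$ is calibrated so that the two contributions combine to the clean fraction $\frac{11}{70}$.
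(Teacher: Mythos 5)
Your proposal is correct and follows essentially the same route as the paper: bound each slope $m_i$ by applying Corollary~\ref{LipCor2} to $y_{i+1}, y_i \in B \cap K_\al$ and absorb the additive $\tfrac{1}{10}r_{n+2}^+$ term using the lower bound $\pr{y_{i+1}-y_i}\cdot\om_2 \ge w = \tfrac{7 r_{n+2}^+}{4\la M}$, yielding $\abs{m_i} \le \tfrac{\la M}{10} + \tfrac{4\la M}{70} = \tfrac{11\la M}{70}$. This matches the paper's computation exactly, including the observation that the buffer strips $S_i^+$ provide the needed separation in the $\om_2$-direction.
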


\begin{proof}
Since $F$ is piecewise linear, we simply need to find an upper bound for each of the slopes $m_i$ defined in \eqref{miDefn}.
Observe that by Corollary~\ref{LipCor2},
\begin{align}
\label{slopeBound}
\abs{m_i} 
&= \abs{\frac{\pr{y_{i+1} - y_i} \cdot \om_1}{\pr{y_{i+1} - y_i} \cdot \om_2} }
\le \abs{\frac{\frac{\la M}{10} \pr{y_{i+1} - y_i} \cdot \om_2 + \frac{r_{n+2}^+}{10}}{\pr{y_{i+1} - y_i} \cdot \om_2}}
\le \frac{\la M}{10} + \frac{r_{n+2}^+}{10 w}
= \frac{11\la M}{70} ,
\end{align}
where we have used the lower bound on the denominator.
This shows that $F$ is $\frac{11\la M}{70}$-Lipschitz, as required.
\end{proof}

Now we'll show that all points in $B \cap K_\al$ lie in a small region around the graph of $F$.
Let 
$$G\pr{F} = \set{t \om_2 + F\pr{t} \om_1 : t \in U},$$ 
the graph of $F$ over $\pr{\om_2, \om_1}$.
Then define a neighborhood (measured with respect to the $\om_1$ direction) of the graph of $F$ to be 
\begin{equation}
\label{FNeig}
N\pr{F} = \set{z \in B : \abs{F\pr{z \cdot \om_2} - z \cdot \om_1} \le r_{n+2}^+}.
\end{equation}

\begin{lem}[Neighborhood containment]
\label{graphNLemma}
For $F$ as given in \eqref{FDefn} and it neighborhood defined in \eqref{FNeig}, it holds that $B \cap K_\al \su N\pr{F}$.
\end{lem}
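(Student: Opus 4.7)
The plan is to do a direct case analysis: every $z\in B\cap K_\al$ lies in a non-empty strip, and by construction every non-empty strip is either one of the selected strips $S_i^-$ (containing the reference point $y_i$) or one of the abutting strips $S_i^+$. In either case $z$ is within horizontal distance (measured in the $\om_2$-direction) of at most $2w$ from $y_i$, where $w = \frac{7 r_{n+2}^+}{4\la M}$.

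First I would fix such a $z$ together with the associated $y_i$, and apply Corollary~\ref{LipCor2} (with the base point $x$ used in defining the frame $(\om_1,\om_2)$) to the pair $y,z\mapsto z,y_i$, obtaining
\begin{equation*}
\abs{(z-y_i)\cdot\om_1}\le\tfrac{\la M}{10}\abs{(z-y_i)\cdot\om_2}+\tfrac{1}{10}r_{n+2}^+\le \tfrac{\la M}{10}(2w)+\tfrac{1}{10}r_{n+2}^+.
\end{equation*}
On the graph side, since $F(y_i\cdot\om_2)=y_i\cdot\om_1$ by the definition \eqref{FDefn}, the Lipschitz estimate from Lemma~\ref{FLip} gives
\begin{equation*}
\abs{F(z\cdot\om_2)-y_i\cdot\om_1}\le\tfrac{11\la M}{70}\abs{z\cdot\om_2-y_i\cdot\om_2}\le\tfrac{11\la M}{70}(2w).
\end{equation*}
The triangle inequality then produces the bound $\abs{F(z\cdot\om_2)-z\cdot\om_1}\le r_{n+2}^+$; in fact, by the choice $w=\frac{7r_{n+2}^+}{4\la M}$, the two Lipschitz terms sum to $\frac{18}{40}r_{n+2}^+\cdot 2 = \frac{36}{40}r_{n+2}^+$ and the additive $\frac{1}{10}r_{n+2}^+$ term makes up exactly $r_{n+2}^+$. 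This yields $z\in N(F)$.

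The only non-computational point to verify carefully is the claim that every $z\in B\cap K_\al$ really does lie in some $S_i^-$ or $S_i^+$. This follows from the greedy selection procedure: by construction the strips strictly between $S_i^+$ and $S_{i+1}^-$ are all disjoint from $K_\al$, and the strips to the left of $S_1^-$ and to the right of $S_N^-$ are also disjoint from $K_\al$. I do not anticipate a serious obstacle; the genuine content of the lemma is already contained in Corollary~\ref{LipCor2} and Lemma~\ref{FLip}, and the remaining work is just the width/slope balancing that the definition of $w$ is engineered to pass.
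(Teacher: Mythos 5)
Your proposal is correct and follows essentially the same argument as the paper: both reduce to the observation that any $z\in B\cap K_\al$ lies within $2w$ (in the $\om_2$-direction) of some selected $y_i$, then combine Corollary~\ref{LipCor2} with the slope bound $\frac{11\la M}{70}$ and the choice of $w$ to get exactly $r_{n+2}^+$; your use of the global Lipschitz bound from Lemma~\ref{FLip} in place of the paper's explicit piecewise-linear formula for $F$ is only a cosmetic streamlining of the same computation.
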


\begin{proof}
Let $z \in B \cap K_\al$.
Then $z \cdot \om_2 \in U$, so there are three possibilities: 
\begin{enumerate}
\item there exists $i \in \set{1, \ldots, N-1}$ so that $z \cdot \om_2 \in \pb{y_i \cdot \om_2, y_{i+1} \cdot \om_2}$;
\item $z \cdot \om_2 \le y_1 \cdot \om_2$; or
\item $z \cdot \om_2 > y_N \cdot \om_2$.
\end{enumerate}
Assume that the first case holds.
Then $z \in S_i^- \cup S_i^+ \cup S_{i+1}^-$.
If $z \in S_i^- \cup S_i^+$, then $\abs{\pr{z - y_i} \cdot \om_2} \le 2w$.
Using \eqref{FDefn}, we see from Corollary~\ref{LipCor2} and \eqref{slopeBound} that
\begin{align*}
\abs{F\pr{z \cdot \om_2} - z \cdot \om_1} 
&= \abs{\pr{y_i - z} \cdot \om_1 + m_i \pr{z  - y_i} \cdot \om_2} \\
&\le \abs{\pr{y_i - z} \cdot \om_1} + \abs{m_i} \abs{ \pr{z  - y_i} \cdot \om_2} \\
&\le \abs{\frac{\la M}{10} \pr{y_{i} - z} \cdot \om_2 + \frac{r_{n+2}^+}{10}} + \frac{11 \la M}{70} \abs{ \pr{z  - y_i} \cdot \om_2} \\
&\le \frac{\la M}{10} 2 w + \frac{r_{n+2}^+}{10} + \frac{11 \la M}{70} 2w 
= r_{n+2}^+.
\end{align*}
On the other hand, if $z \in S_{i+1}^-$, then $\abs{\pr{y_{i+1} - z} \cdot \om_2} \le w$.
Note that we can rearrange \eqref{FDefn} to get that
$$F\pr{z \cdot \om_2} = y_{i+1} \cdot \om_1 - m_i \pr{y_{i+1} - z} \cdot \om_2.$$
Proceeding as above, we see that
\begin{align*}
\abs{F\pr{z \cdot \om_2} - z \cdot \om_1} 
&\le \abs{\frac{\la M}{10} \pr{y_{i+1} - z} \cdot \om_2 + \frac{r_{n+2}^+}{10}} + \frac{11 \la M}{70} \abs{ \pr{y_{i+1} - z} \cdot \om_2} 
\le \frac{11 r_{n+2}^+}{20}.
\end{align*}
If the second case holds, then $z \in S_1^-$ and $\abs{\pr{y_{1} - z} \cdot \om_2} \le w$.
Since $F\pr{z \cdot \om_2} = y_1 \cdot \om_1$ by \eqref{FDefn}, then Corollary~\ref{LipCor2} shows that
\begin{align*}
\abs{F\pr{z \cdot \om_2} - z \cdot \om_1} 
&= \abs{\pr{y_{1} - z} \cdot \om_1}
\le \abs{\frac{\la M}{10} \pr{y_1 - z} \cdot \om_2 + \frac{r_{n+2}^+}{10}} 
\le \frac{\la M}{10} w + \frac{r_{n+2}^+}{10} 
= \frac{11 r_{n+2}^+}{40}. 
\end{align*}
Finally, if the third case holds, then $z \in S_N^- \cup S_N^+$ and $\abs{\pr{y_{N} - z} \cdot \om_2} \le 2w$.
Using $F\pr{z \cdot \om_2} = y_{N} \cdot \om_1$ by \eqref{FDefn}, Corollary~\ref{LipCor2} again shows that
\begin{align*}
\abs{F\pr{z \cdot \om_2} - z \cdot \om_1} 
&= \abs{\pr{y_N - z} \cdot \om_1}
\le \abs{\frac{\la M}{10} \pr{y_N - z} \cdot \om_2 + \frac{r_{n+2}^+}{10}} 
\le \frac{\la M}{10} 2w + \frac{r_{n+2}^+}{10} 
= \frac{9 r_{n+2}^+}{20}. 
\end{align*}
In all cases, we have shown that $\abs{F\pr{z \cdot \om_2} - z \cdot \om_1} \le r_{n+2, +}$, proving that $B \cap K_\al \su N\pr{F}$.
\end{proof}

These observations lead us to the following.

\begin{prop}[$K_\al$ has small measure]
\label{KalProp}
For $K_\al = N_\al \setminus G_\al$, we have $\mu\pr{K_\al} \lesssim N^{-1/100} L$.
\end{prop}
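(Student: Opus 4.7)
The plan is to prove the estimate ball-by-ball using the cover $\mathcal{B}_{n_2}$ and then sum via~\eqref{BCover}. For each $B \in \mathcal{B}_{n_2}$, the goal is to show $\mu(B \cap K_\al) \lesssim N^{-1/100} r(B)$.

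First, combining Lemmas~\ref{graphNLemma} and~\ref{FLip}, the set $B \cap K_\al$ is contained in the band $N(F)$ of vertical width $r_{n_2+2}^+$ around the graph of a Lipschitz function $F$ with $\Lip(F) \le 11\la M_2/70$. Invoking Remark~\ref{lambda assumption}, the definition~\eqref{LipConstant} of $M_2 = 10^4/r_{n_2-200}^-$, and the separation-of-scales bound $r_{n_2}^-/r_{n_2-200}^- \le 2^{-200}$, one verifies that $\Lip(F) \le 1/r_{n_2}^-$ with substantial room to spare, and moreover that $\Lip(F) \cdot r_{n_2+2}^+$ is negligibly small.

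Second, apply the near-unrectifiability hypothesis~\eqref{unrect} at $n = n_2$, namely $R_E(r_{n_2+2}^+, r_{n_2}^-, 1/r_{n_2}^-) \le N^{-1/100}$. Taking the interval $U := B \cdot \om_2$, we have $\abs{U} = 2r(B) \ge 2 r_{n_2}^-$, so Definition~\ref{rect_const_defn} (after matching $\om_1, \om_2$ with our parametrization) yields that the set
$$S := \set{t \in U : t \om_2 + (F(t) + y)\om_1 \in E \textrm{ for some } \abs{y} \le r_{n_2+2}^+}$$
has Lebesgue measure $\abs{S} \le 4 N^{-1/100} r(B)$. Since $B \cap K_\al \subset N(F) \cap E$, every $z \in B \cap K_\al$ satisfies $z \cdot \om_2 \in S$.

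Third, convert the Lebesgue-measure bound on $S$ into a Hausdorff-measure bound on $B \cap K_\al$ via a Vitali argument. Covering $B \cap K_\al$ by balls $B_{r_{n_2+2}^+}(z)$ centered at $z \in B \cap K_\al$ and extracting a disjoint subfamily $\{B_{r_{n_2+2}^+}(z_i)\}$ via the 5r-covering lemma, the smallness of $\Lip(F) \cdot r_{n_2+2}^+$ forces any two centers $z_i, z_j$ with $\abs{z_i \cdot \om_2 - z_j \cdot \om_2} \le r_{n_2+2}^+$ to lie within $O(r_{n_2+2}^+)$ of each other, bounding the multiplicity of the $\om_2$-projection on these disjoint balls by an absolute constant. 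Combining this multiplicity bound with the inclusion $\{z_i \cdot \om_2\} \subset S$ and $\abs{S} \lesssim N^{-1/100} r(B)$, the cardinality of the disjoint family is $\lesssim \abs{S}/r_{n_2+2}^+$, producing a cover of $B \cap K_\al$ by balls of total diameter $\lesssim \abs{S} \lesssim N^{-1/100} r(B)$. Hence $\mu(B \cap K_\al) \lesssim N^{-1/100} r(B)$, and summing over $B \in \mathcal{B}_{n_2}$ yields the claimed bound.

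The main obstacle is the third step: extracting a Hausdorff-measure bound on the two-dimensional slice $B \cap K_\al$ from a Lebesgue-measure bound on the projection $S$. The multiplicity control on disjoint $r_{n_2+2}^+$-balls is the crucial mechanism, and it is exactly the narrowness of $N(F)$ together with the smallness of $\Lip(F)\cdot r_{n_2+2}^+$ under separation of scales that makes this argument go through.
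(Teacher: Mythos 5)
Your first two steps coincide with the paper's entire proof of Proposition~\ref{KalProp}: the containment $B\cap K_\al \subset N(F)$ from Lemma~\ref{graphNLemma}, the bound $\Lip(F) \le \tfrac{11\la M_2}{70} \le 1/r_{n_2}^-$ coming from Lemma~\ref{FLip}, \eqref{LipConstant}, and separation of scales, the application of the near-unrectifiability hypothesis \eqref{unrect}, and the summation over $\mathcal{B}_{n_2}$ via \eqref{BCover}. The paper stops there: it writes $\mu(N(F)) \le R_E(r_{n_2+2}^+, r_{n_2}^-, \la M_2)\, r(B)$ directly ``by the definition of $R_E$'' and does not carry out anything like your third step.

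Your third step, which tries to upgrade the Lebesgue bound on the projection $S$ to a bound on $\mu(B\cap K_\al)$, does not work as written, for two reasons. First, the multiplicity of the $\om_2$-projection on the disjoint balls is controlled by $\Lip(F)$, not by an absolute constant: two centers with $\abs{z_i\cdot\om_2 - z_j\cdot\om_2} \le r_{n_2+2}^+$ satisfy only $\abs{z_i - z_j} \lesssim \pr{1+\Lip(F)}\,r_{n_2+2}^+$, and $\Lip(F) \approx \la M_2 = 10^4\la/r_{n_2-200}^- \gg 1$. The product $\Lip(F)\cdot r_{n_2+2}^+$ is indeed small in absolute terms, but your argument needs it to be small \emph{compared to} $r_{n_2+2}^+$, i.e., it needs $\Lip(F) = O(1)$, which is false; the covering therefore loses a factor on the order of $\la M_2$, which destroys the estimate. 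Second, even granting a multiplicity bound, a cover of $B\cap K_\al$ by balls of the single radius $r_{n_2+2}^+$ with small total diameter only controls the restricted content $\mathcal{H}^1_{r_{n_2+2}^+,\, r_{n_2+2}^+}$, whereas $\mu = \mathcal{H}^1|_E$ is the full Hausdorff measure, obtained from arbitrarily fine covers, and is not bounded above by a fixed-scale content. So while your instinct that the passage from $\abs{S}$ to $\mu(N(F))$ deserves justification is reasonable, the mechanism you propose does not supply it; the paper itself treats this passage as immediate from Definition~\ref{rect_const_defn} and Lemma~\ref{FLip}.
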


\begin{proof}
By combining our previous results, we see that 
\begin{align*}
\mu\pr{B \cap K_\al} 
&\le \mu\pr{N\pr{F}}  &\text{(by Lemma~\ref{graphNLemma})} \\
&\le R_E\pr{r_{n+2}^+, r_n^-, \la M} r\pr{B}  &\text{(by~Lemma~\ref{FLip} and the definition of $R_E$)} \\
&\le R_E\pr{r_{n+2}^+, r_n^-, \frac 1 {r_n^-}} r\pr{B} & \text{(by \eqref{LipConstant})} \\
&\le N^{-1/100} r\pr{B} & \text{(by \eqref{unrect}).} 
\end{align*}
Summing over all $B$, we see that $\mu\pr{K_\al} \lesssim N^{-1/100} L$.
\end{proof}
%

\smallskip

\section{Favard Curve Length Bounds}
\label{FavCirc}

In this section, we estimate the Favard curve lengths of the sets $\widetilde H$ and $\widetilde D$.
For $\widetilde H$, we rely on a Fubini-type argument reminiscent of \cite[Lemma 8.4]{Fal85} and \cite[Theorem 7.7]{Mat95}.
To estimate the curve projection of $\widetilde D$, we use the Hardy-Littlewood maximal inequality, which may  be interpeted as a quantification of the Lebesgue differentiation theorem.

\subsection{Estimating the Favard curve length of $\widetilde H$}
\label{tildeHFav}

Recall that $\widetilde H$ is a parametric neighborhood of the collection of points corresponding to high-multiplicity curves, recall Definition~\ref{hmSC}.
See \eqref{wHDef} for the precise definition of $\widetilde H$.
Here we use a Fubini-type argument to establish the following bound.

\begin{prop}[$\widetilde H$ has small Favard length]
\label{FavHProp}
For $\widetilde H$ as given in \eqref{wHDef}, it holds that $\FavC\pr{\widetilde H} \lesssim N^{-1/100} L$.
\end{prop}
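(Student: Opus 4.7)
The plan is to run a Fubini-type multiplicity argument in the spirit of the classical Besicovitch proofs in \cite[Lemma 8.4]{Fal85} and \cite[Theorem 7.7]{Mat95}, with the quantitative gain of $N^{-1/100}$ coming from the fact that every pair witnessing $H_m$-membership carries a curve with at least $k := N^{1/100}$ points of $E$ that are $r_m^-$-separated, where $m := n_0 - N^{-3/100}N$.

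First fix $\be \in A$ and let $\widetilde H_\be = \{e \in E : (e,\be) \in \widetilde H\}$, so that $\FavC(\widetilde H) = \int_A |\Phi_\be(\widetilde H_\be)|\,d\be$. Since $A$ is bounded, it is enough to produce a uniform pointwise bound $|\Phi_\be(\widetilde H_\be)| \lesssim N^{-1/100}L$. Write $r' := r_{m+10}^-$ for the fattening scale in the definition \eqref{wHDef} of $\widetilde H$. Invoke the uniform length bound \eqref{Ulb} at scale $m+1$ to obtain a finite cover $\mathcal B$ of $E$ by balls of radius $r(B) \in [r_{m+1}^-, r_{m+1}^+]$ with $\sum_{B \in \mathcal B} r(B) \lesssim L$. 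The separation-of-scales hypothesis \eqref{Sos} gives $\diam B \le 2 r_{m+1}^+ \le r_m^-$, so any single $B \in \mathcal B$ contains only $O(1)$ points of any $r_m^-$-separated set. For each $B$ let $\widetilde \Phi_\be(B)$ denote the $2r'$-neighborhood of $\Phi_\be(B)$; since $\vp$ is $1$-Lipschitz, $|\widetilde \Phi_\be(B)| \lesssim r(B) + r'$, and iterating \eqref{Sos} nine times yields $r' \le 2^{-9} r_{m+1}^-$, hence $\sum_B |\widetilde \Phi_\be(B)| \lesssim L + r'|\mathcal B| \lesssim L$.

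The key step is the multiplicity estimate: every $h \in \Phi_\be(\widetilde H_\be)$ is covered by at least $k/C$ of the sets $\{\widetilde\Phi_\be(B)\}_{B \in \mathcal B}$. Indeed, write $h = \Phi_\be(e)$ and pick $\al$ witnessing $(e,\be) \in \widetilde H$, so $|\al - \be| \le r'$ and $(e,\al) \in H_m$; then $C_{e,\al}$ contains $k$ points $e_1,\dots, e_k \in E$ that are $r_m^-$-separated. These lie in at least $k/C$ distinct balls of $\mathcal B$ by the $\diam B \le r_m^-$ bound. For each ball $B \ni e_i$, the identity $\Phi_\al(e_i) = \Phi_\al(e)$ (since $e_i, e \in C_{e,\al}$) together with the $1$-Lipschitz estimate $|\Phi_\al(x) - \Phi_\be(x)| \le |\al - \be| \le r'$ (applied to $x = e$ and $x = e_i$) shows $|\Phi_\be(e_i) - h| \le 2 r'$, so $h \in \widetilde\Phi_\be(B)$. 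Integrating this multiplicity bound yields $(k/C)\,|\Phi_\be(\widetilde H_\be)| \le \sum_B |\widetilde\Phi_\be(B)| \lesssim L$, whence $|\Phi_\be(\widetilde H_\be)| \lesssim N^{-1/100} L$.

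The main obstacle is the mismatch between the witness parameter $\al$, at which the high-multiplicity curve is defined, and the projection parameter $\be$, at which $\FavC$ is measured: the $k$ well-separated points lie on $C_{e,\al}$ rather than on the $\Phi_\be$-fiber through $e$. The parametric fattening built into \eqref{wHDef} is designed to absorb this mismatch; the $10$-step buffer in $r' = r_{m+10}^-$, together with the choice to cover at scale $m+1$, simultaneously dominates all $r'$-fattening terms in the budget $\sum_B |\widetilde\Phi_\be(B)| \lesssim L$ and ensures that each covering ball meets only $O(1)$ of the well-separated witness points.
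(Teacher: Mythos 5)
Your strategy is sound, and it is essentially a projection-side variant of the paper's own argument: both proofs double-count the single-scale cover $\mathcal{B}$ furnished by the uniform length bound \eqref{Ulb} against the $N^{1/100}$ separated witness points guaranteed by Definition~\ref{hmSC}. The paper realizes the double count by integrating the weight $\sum_j 1_{5B_j}/r\pr{B_j}$ over $\R^2$ and slicing by the curve fibration $\set{\Phi_{\al,+}^{-1}\pr{\be}}_\be$, which requires the geometric fact that $5B_j$ meets $C_{e,\al}$ in length $\gtrsim r\pr{B_j}$; you instead transfer everything to the projection line via the $1$-Lipschitz dependence of the projection on both the point and the parameter, which avoids that geometric step. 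Your scale bookkeeping (cover at scale $m+1$, fattening by $r' = r_{m+10}^-$, each ball meeting $O(1)$ of the $r_m^-$-separated points, $r'\abs{\mathcal{B}}\lesssim L$ via \eqref{Sos}) is consistent, and restricting to $\be\in A$ is harmless since $\Phi_\be$ vanishes off $A$ for subsets of $\brac{0,1}^2$.

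There is, however, one step that fails as written. The witness points $e_1,\dots,e_k$ lie on the \emph{extended} curve $C_{e,\al}$ of \eqref{CealDef}, so $e_{i,1}-\al\in I_+$, and nothing prevents $e_{i,1}-\al$ from lying in $I_+\setminus I$: the extension has horizontal width $\de = 10^{-5}+2^{-100}$, which is huge compared with $r_m^-$, so in principle almost all of the separated points can sit over the extension. For such $e_i$ the unextended projections $\Phi_\al\pr{e_i}$ and $\Phi_\be\pr{e_i}$ are empty by \eqref{PhialDefn}, so your identity $\Phi_\al\pr{e_i}=\Phi_\al\pr{e}$ and the conclusion $h\in\widetilde\Phi_\be\pr{B}$ break down: $\Phi_\be\pr{B}$ may be empty, or may only be within $\sim r\pr{B}\gg 2r'$ of $h$, and the multiplicity lower bound $k/C$ is lost. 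The repair stays inside your framework: define $\widetilde\Phi_\be\pr{B}$ as the $2r'$-neighborhood of the extended projection $\Phi_{\be,+}\pr{B}$ from \eqref{Phial+Defn} (still of measure $\lesssim r\pr{B}$, since $\vp_+$ is $1$-Lipschitz and $r'\le r\pr{B}$), replace the identity by $\Phi_{\al,+}\pr{e_i}=\Phi_\al\pr{e}$ together with $\abs{\Phi_{\al,+}\pr{e_i}-\Phi_{\be,+}\pr{e_i}}\le\abs{\al-\be}$, and discard the at most two witness points whose abscissa lies within $r'$ of an endpoint of $I_+$, for which even $\Phi_{\be,+}\pr{e_i}$ may be undefined; since $r'\le 2^{-10}r_m^-$, each such endpoint zone has diameter less than $r_m^-$ and so contains at most one separated point, leaving at least $k-2\gtrsim N^{1/100}$ usable witnesses. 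With these changes your argument goes through and gives the stated bound.
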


\begin{proof}
Set $n_0^- = n_0 - N^{-3/100}N$, an abbreviation for the scale around which we are working. 
Since $H_{n_0^-} \su \mathcal{E}$, then it follows from the definition of $\widetilde H$ given in \eqref{wHDef} that $\widetilde{H} \su \R^2 \times A_0$, where $A_0$ is the $r_{n_0^- + 10}^-$ neighborhood of the bounded interval $A$.
By the definition of Favard curve length described by \eqref{3dFavC}, since $A_0$ is a bounded interval, then it suffices to show that for any $\al \in A_{0}$, 
\begin{equation*}
\abs{\Phi_{\al}\pr{\widetilde H_{\al}}} \lesssim N^{-1/100} L,
\end{equation*}
where $\widetilde H_{\al} = \set{e : \pr{e, \al} \in \widetilde H}$. 

Fix $\al \in A_0$.
Given $\pr{e, \al} \in \widetilde H$, there exists $(e,\al')\in H_{n_0^-}$ so that $\abs{\al - \al'} \le r_{n_0^-+10}^-$.
If $\pr{e, \al'} \in H_{n_0^-}$, then by Definition~\ref{hmSC}, there exists a set of points $\set{e_j}_{j=1}^{N^{1/100}} \su C_{e, \al'} \cap E$ that are $r_{n_0^-}$-separated.
By the uniform length bound given in \eqref{Ulb}, there exists a collection $\mathcal{B}$ of balls $B$ that cover $E$ for which $r\pr{B} \in \brac{r_{n_0^-+5}^-, r_{n_0^-+5}^+}$ and $\disp \sum_{B \in \mathcal{B}} r\pr{B} \lesssim L$.
Since $\mathcal{B}$ covers $E$, then for each point $e_j$, there exists a ball $B \in \mathcal{B}$ such that $e_j \in B$.
As $r\pr{B} \le r_{n_0^-+5}^+ \le \frac 1 {32} r_{n_0^-}$ by \eqref{Sos}, then each $e_j$ belongs to a distinct ball.
Therefore, there are $N^{1/100}$ distinct, non-overlapping balls $\set{B_j}_{j=1}^{N^{1/100}}$ associated to the pair $\pr{e, \al'} \in H_{n_0^-}$.

Note that the distance between a fixed point on one curve and another curve over the same parameter range is bounded above by the distance between their centers, see Figure~\ref{tildeHProof} (left).
Therefore, for any $j$, since $e_j \in C_{e, \al'}$, then
\begin{align*}
\dist\pr{e_j, C_{e,\al}} 
&\le \dist\pr{\pr{\al, \Phi_\al(e)}, \pr{\al', \Phi_{\al',+}(e)}} 
= \sqrt{ \abs{\al - \al'}^2 +\pr{\Phi_\al(e) - \Phi_{\al',+}(e)}^2} \\
&= \sqrt{ \abs{\al - \al'}^2 +\pr{\vp\pr{e_1 - \al} - \vp_+\pr{e_1 - \al'}}^2}
\le \sqrt{ \abs{\al - \al'}^2 +\abs{\al - \al'}^2} \\
&< 2 r_{n_0^-+5}^-
\le 2^{-4} r\pr{B_j},
\end{align*}
where we have used that $\vp_+$ is $1$-Lipschitz and the separation of scales \eqref{Sos}.
Since each $B_j$ intersects $C_{e,\al'}$ at $e_j$, then $5 B_j$ intersects $C_{e,\al}$ along a small curve of length $\gtrsim r\pr{B_j}$, see Figure~\ref{tildeHProof} (right).
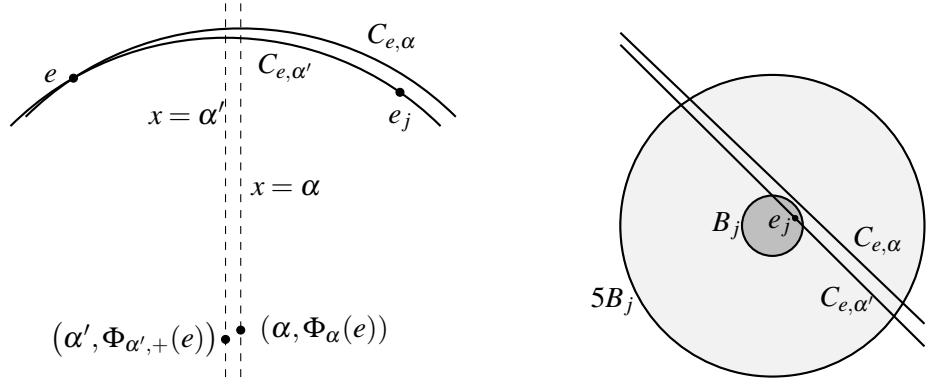
\begin{figure}
\begin{tikzpicture}
\draw[thick] (2.8284, 2.8284) arc (45:135:4cm);
\draw[thick] (3.0284, 2.9518) arc (45:135:4cm);
\draw [fill=black] ( -2,3.4641) circle (1.5pt);
\draw[color=black] (-2.3,3.5) node {$e$};
\draw [fill=black] ( 2.2943, 3.2766) circle (1.5pt);
\draw[color=black] (2.3,2.9) node {$e_j$};
\draw [fill=black] (0.2,0.1234) circle (1.5pt);
\draw[color=black] (1.3,.1234) node {$\pr{\al, \Phi_{\al}(e)}$};
\draw [fill=black] (0,0) circle (1.5pt);
\draw[color=black] (-1.2,0) node {$\pr{\al', \Phi_{\al', +}(e)}$};
\draw[dashed] (0,-.5) -- (0, 4.5);
\draw[dashed] (0.2,-.5) -- (0.2, 4.5);
\draw[color=black] (-.5,3) node {$x = \al'$};
\draw[color=black] (0.8,2) node {$x = \al$};tikzpicture shade 
\draw[color=black] (0.8,3.6) node {$C_{e,\al'}$};
\draw[color=black] (2.2,4) node {$C_{e,\al}$};
\end{tikzpicture}
\qquad\qquad
\begin{tikzpicture}
\draw (2, 0) arc (0:360:2cm);
\filldraw[color=black, fill=black!5, thick](0,0) circle (2 cm);
\filldraw[color=black, fill=black!25, thick](0,0) circle (.4 cm);
\draw [fill=black] (0.3,0.10) circle (1pt);
\draw[thick] (2,-1.6) -- (-2, 2.4);
\draw[thick] (2,-1.3) -- (-2, 2.56);
\draw[color=black] (0.1,0) node {$e_j$};
\draw[color=black] (-0.6,0) node {$B_j$};
\draw[color=black] (-2.1,-1) node {$5 B_j$};
\draw[color=black] (1,-1) node {$C_{e, \al'}$};
\draw[color=black] (1.4,-.2) node {$C_{e, \al}$};
\end{tikzpicture}
\centering
\caption{The images of $C_{e, \al}$ and $C_{e, \al'}$ when $\al$ and $\al'$ are close (left), and a visualization of the behavior near some $e_j$ (right).}
\label{tildeHProof}
\end{figure}
That is, for each $j$
$$\int_{C_{e,\al}} 1_{5 B_j} d \mathcal{H}^1 \gtrsim r\pr{B_j}.$$
The separation of the points ensures separation of the balls, and so we may sum over $j$ to get that for any $e \in \widetilde H_\al$,
\begin{equation}
\label{density_sum}
\int_{C_{e,\al}} \sum_{j=1}^{N^{1/100}} \frac{1_{5 B_j} }{r\pr{B_j}} d \mathcal{H}^1 \gtrsim N^{1/100}.
\end{equation}
Observe that 
$$\R^2 \supset \bigsqcup_{\be \in \R} \Phi_{\al,+}^{-1}\pr{\be} \supset \bigsqcup_{\be \in \Phi_\al\pr{\widetilde H_\al}} \Phi_{\al,+}^{-1}\pr{\be} = \bigsqcup_{\be \in \Phi_\al\pr{\widetilde H_\al}} C_{e, \al},$$
where $e = e_\be \in \widetilde H_\al$ is some point for which $\Phi_\al(e) = \be$.
Indeed, if $\be \in \Phi_\al\pr{\widetilde H_\al}$, then there exists $e \in \widetilde H_\al$ so that $\Phi_\al\pr{e} = \be$ and it follows that $\Phi_{\al,+}^{-1}\pr{\be} = C_{e, \al}$.
An application of Fubini's theorem then shows that
\begin{align*}
\int_{\R^2} \sum_{j=1}^{N^{1/100}} \frac{1_{5 B_j} }{r\pr{B_j}} dx 
&\ge \int_{\Phi_{\al}\pr{\widetilde H_{\al}}} \int_{C_{e, \al}} \sum_{j=1}^{N^{1/100}} \frac{1_{5 B_j} }{r\pr{B_j}} d \mathcal{H}^1 \, d\be \\
&\gtrsim \int_{\Phi_{\al}\pr{\widetilde H_{\al}}} N^{1/100} \, d\be
= N^{1/100} \abs{\Phi_{\al}\pr{\widetilde H_{\al}}},
\end{align*}
where we have applied \eqref{density_sum}.
On the other hand,
\begin{align*}
\sum_{j=1}^{N^{1/100}} \int_{\R^2} \frac{1}{r\pr{B_j}} 1_{5 B_j} dx
\lesssim \sum_{j=1}^{N^{1/100}} r\pr{B_j}
\le \sum_{B \in \mathcal{B}} r\pr{B} \lesssim L,
\end{align*}
so by combining the previous two inequalities, we see that $\abs{\Phi_{\al}\pr{\widetilde H_{\al}}} \lesssim N^{-1/100} L$,
as required.
\end{proof}

\subsection{Estimating the Favard curve length of $\widetilde D$}
\label{tildeDFav}

Recall that $\widetilde D$ is a standard neighborhood of the collection of points that are contained in high-density curve strips, see Definition~\ref{hdSS}.
The set $\widetilde D$ is defined in \eqref{wDDef}.
Here we use an argument involving the Hardy-Littlewood maximal inequality to establish the following bound.

\begin{prop}[$\widetilde D$ has small Favard length]
\label{FavDProp}
For $\widetilde D$ as given in \eqref{wDDef}, it holds that $\FavC\pr{\widetilde D} \lesssim N^{-1/100} L$.
\end{prop}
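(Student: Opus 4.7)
The plan is to exploit the high-density condition via the one-dimensional Hardy--Littlewood maximal inequality applied to the pushforward of $\mu$ under $\Phi_{\be,+}$, with a small perturbation argument absorbing the Euclidean neighborhood $\mathcal{N}_{r_0}$ in the definition of $\widetilde D$. Write $n^* = n_2 - N^{-10/100} N$, $r_* = r_{n^*}^-$, and $r_0 = r_{n^*+10}^-$, so that $\widetilde D = \mathcal{N}_{r_0}(D_{n^*})$ and $r_0 \leq 2^{-10} r_*$ by iterating \eqref{Sos}. For each $\be$ in the bounded set $A_0 := A + [-r_0, r_0]$, I would introduce the pushforward measure $\pi_\be := (\Phi_{\be,+})_* \mu$, a finite Borel measure on $\R$ of total mass at most $\mu(E) \leq L$. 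In this language, membership $(e,\al) \in D_{n^*}$ from Definition~\ref{hdSS} is exactly the statement that $\Phi_{\al,+}(e)$ lies in some interval $J$ with $|J| \geq r_*$ and $\pi_\al(J) \geq N^{1/100} |J|$.

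The central step is to establish the containment
$$\Phi_\be(\widetilde D_\be) \subset \bigl\{y \in \R : M\pi_\be(y) \gtrsim N^{1/100}\bigr\},$$
where $M$ denotes the one-dimensional Hardy--Littlewood maximal operator. Given $b \in \widetilde D_\be$ with a witness $(e,\al) \in D_{n^*}$ satisfying $|b-e|, |\al-\be| < r_0$ and associated interval $J$, the $1$-Lipschitz property of $\vp_+$ yields $|\Phi_\be(b) - \Phi_{\al,+}(e)| \leq 3 r_0$, so $\Phi_\be(b)$ lies in the enlargement $J^* := J + [-4 r_0, 4 r_0]$, which still satisfies $|J^*| \leq 2|J|$ since $r_0 \ll r_* \leq |J|$. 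The same Lipschitz bound transfers mass from $\pi_\al$ to $\pi_\be$: because $|\Phi_{\al,+}(e') - \Phi_{\be,+}(e')| \leq r_0$ whenever both are defined, one has $\Phi_{\al,+}^{-1}(J) \cap E \subset \Phi_{\be,+}^{-1}(J^*) \cap E$, whence $\pi_\be(J^*) \geq \pi_\al(J) \geq \tfrac{1}{2} N^{1/100} |J^*|$, which is the claim.

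Applying the weak-type $(1,1)$ bound for $M$ will then give $|\Phi_\be(\widetilde D_\be)| \lesssim N^{-1/100} \pi_\be(\R) \leq N^{-1/100} L$, and integrating over the bounded interval $A_0$ via \eqref{3dFavC} produces the desired estimate $\FavC(\widetilde D) \lesssim N^{-1/100} L$. The main obstacle I anticipate is the boundary bookkeeping at $\partial I_+$ during the mass-transfer step: a point $e'$ may have $\Phi_{\al,+}(e')$ defined but $\Phi_{\be,+}(e')$ not, because $e'_1 - \be$ has slipped outside $I_+$. This will be handled by working with a further $\de$-prolongation $\mathcal{C}_{++}$ of the curve whose extended projection preserves the Lipschitz properties used above; the extension is harmless since $r_0 \ll \de$.
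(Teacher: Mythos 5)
Your proposal is correct and follows essentially the same route as the paper: fix a parameter, push $\mu$ forward to the line, use the Lipschitz bound $|\vp'_+|\le 1$ together with $r_0 \ll r_*$ to transfer the high-density condition from the witness parameter to the fixed one (your preimage-side containment $\Phi_{\al,+}^{-1}(J)\cap E \subset \Phi_{\be,+}^{-1}(J^*)\cap E$ is the dual of the paper's image-side claim $\Phi_\al(\Phi_{\al',+}^{-1}(2J))\subset 3J$), apply the weak-$(1,1)$ Hardy--Littlewood inequality, and integrate over the bounded parameter interval. Your flag about boundary bookkeeping at $\partial I_+$ is well founded: the paper glosses over it by writing $\mu_1(\Phi_\al(\Phi_{\al',+}^{-1}(2J))) = \mu(\Phi_{\al',+}^{-1}(2J))$, an identity which in fact only gives $\ge$ on the part of $\Phi_{\al',+}^{-1}(2J)$ lying inside $\mathrm{dom}\,\Phi_\al$, and a $\mu$-positive sliver of the strip near $\partial(\al+I_+)$ can a priori fall outside that domain. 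Your fix of taking the pushforward under a further $\de$-prolongation $\Phi_{\be,++}$ (so that $\Phi_{\al,+}^{-1}(J)\subset \mathrm{dom}\,\Phi_{\be,++}$ whenever $|\al-\be|\le r_0 \ll \de$) is exactly the right repair, and since the weak-type bound only needs $\pi_\be(\R)\le \mu(E)\le L$, nothing else in the argument changes.
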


\begin{proof}
For brevity, set $n_2^- = n_2 - N^{-10/100} N$.
Since $D_{n_2^-} \su \mathcal{E}$, then it follows from the definition of $\widetilde D$ given in \eqref{wDDef} that $\widetilde D \su \R^2 \times A_2$, where $A_2$ denotes the $r_{n_2^-+10}^-$ neighborhood of the interval $A$.
In particular, we may use \eqref{3dFavC} to define $\FavC\pr{\widetilde D}$.
Since $A_2$ is a bounded interval, it suffices to show that for any $\al \in A_{2}$, 
$$\abs{\Phi_\al\pr{\widetilde D_\al}} \lesssim N^{-1/100} L,$$
where $\widetilde D_\al = \set{e : \pr{e, \al} \in \widetilde D}$.

Fix $\al \in A_{2}$.
Let $\pr{e, \al} \in \widetilde D$ and then $e \in \widetilde{D}_\al$.
Assume that $\pr{e, \al} \in \mathcal{E}$ for otherwise $\Phi_\al(e) = \emptyset$ and there is nothing to show.
By the definition of $\widetilde D$ given in \eqref{wDDef}, there exists $\pr{e', \al'} \in D_{n_2^-}$ so that $\norm{\pr{e-e', \al - \al'}} \le r_{n_2^-+10}^-$.
Since $\pr{e', \al'} \in D_{n_2^-}$, then by Definition~\ref{hdSS}, there exists an interval $J \su \R$ containing $\Phi_{\al'}\pr{e'}$ with $\abs{J} \ge r_{n_2^-}^-$ and $\mu\pr{\Phi^{-1}_{\al',+}\pr{J}} \ge N^{1/100} \abs{J}$. 
Since $e' \in \Phi_{\al'}^{-1}\pr{J}$ implies that $e' \in \Phi_{\al',+}^{-1}\pr{J}$, and $\norm{e - e'} \le r_{n_2^-+10}^- \le 2^{-10} \abs{J}$, then $e \in \Phi^{-1}_{\al',+}\pr{2J}$.
Moreover,
\begin{equation}
\label{JProp}
\mu\pr{\Phi^{-1}_{\al',+}\pr{2J}} 
\ge \mu\pr{\Phi^{-1}_{\al',+}\pr{J}} 
\ge N^{1/100} \abs{J}.
\end{equation}

\nid {\bf Claim:} $\Phi_\al\pr{\Phi_{\al',+}^{-1}\pr{2J}} \su 3J$. 
In particular, since $e \in \Phi^{-1}_{\al',+}\pr{2J}$, then $\Phi_\al(e) \in 3J$.\\ 
If $p \in \Phi_{\al',+}^{-1}\pr{2J}$, then $p = \pr{\al' + t, \be + \vp_+(t)}$ for some $t \in I_+$ and some $\be \in 2J$.
By \eqref{PhialDefn}, if $t + \al' - \al \in I$, then $\Phi_\al\pr{p} = \be + \vp_+(t) - \vp\pr{\al' + t - \al}$; otherwise, the projection is empty.
Since $\vp$ is $1$-Lipschitz, $\abs{\Phi_\al\pr{p} - \be} \le \abs{\al - \al'}$.
Since our separation of scales implies that, $\abs{\al - \al'} \le 2^{-10} r_{n_2^-}^- \le 2^{-10} \abs{J}$, then $\abs{\Phi_\al\pr{p} - \be} \le 2^{-10} \abs{J}$ and the claim follows.

Let $\mu_1$ be the pushforward of the measure $\mu$ to $\R$ under the projection $\Phi_\al$.
Since $\mu\pr{\R^2} \le L$, then $\mu_1\pr{\R}= \mu(\Phi_\al^{-1}(\R) )\lesssim L$.
The Hardy-Littlewood maximal function of $\mu_1$ is defined by 
$$\mathrm{M} \mu_1\pr{x} = \sup_{r > 0} \frac{1}{2r} \mu_1\pr{\brac{x-r, x+r}}.$$
Since $\Phi_\al(e) \in 3J$ by the claim, then $3J \su \brac{\Phi_\al(e) - r, \Phi_\al(e) + r}$ for some $r \le 3\abs{J}$.
Then
\begin{align*}
\mu_1\pr{\brac{\Phi_\al(e) - r, \Phi_\al(e) + r}}
\ge \mu_1\pr{3 J}
\ge \mu_1\pr{\Phi_\al\pr{\Phi_{\al',+}^{-1}\pr{2J}} }
= \mu\pr{\Phi^{-1}_{\al',+}\pr{2J}}
\ge N^{1/100} \abs{J},
\end{align*}
where we have applied set containment, the claim, the definition of $\mu_1$, and  \eqref{JProp}. 
It follows that,
\begin{align*}
\mathrm{M} \mu_1\pr{\Phi_\al(e)} \ge \frac{\mu_1\pr{\brac{\Phi_\al(e) - r, \Phi_\al(e) + r}}}{2r} 
\ge \frac{N^{1/100}} 6.
\end{align*}

The Hardy-Littlewood maximal inequality for measures states that $\abs{\set{\be : \mathrm{M} \mu_1\pr{\be} \ge \la}} \le \frac 1 \la \mu_1\pr{\R}$.
In particular, $\abs{\set{\be : \mathrm{M} \mu_1\pr{\be} \ge \frac{N^{1/100}} 6}} \lesssim N^{-1/100} L$.
Since we showed that $\mathrm{M} \mu_1\pr{\Phi_\al(e)} \ge \frac{N^{1/100}} 6$ for an arbitrary $e \in \widetilde D_\al$ for which $\Phi_\al(e) \ne \emptyset$, then 
$$\Phi_\al\pr{\widetilde D_\al} \su \set{\be : \mathrm{M} \mu_1\pr{\be} \ge \frac{N^{1/100}} 6}$$
and we conclude that $\abs{\Phi_\al\pr{\widetilde D_\al}} \lesssim N^{-1/100} L$, as required.
\end{proof}

We conclude this section by pointing out that although Lemmas \ref{FavHProp} and \ref{FavDProp} are proved for $\widetilde H$ and $\widetilde D$ as defined in \eqref{wHDef} and \eqref{wDDef}, respectively, the selection of scales is not important to the arguments that we made in this section.
For example, if we set $H' = \mathcal{M}_{r_{n + 10}^-}\pr{H_{n}}$ for any $n \in \brac{0.1 N, 0.9 N}$, the arguments in Lemma~\ref{FavHProp} show that $\FavC\pr{H'} \lesssim N^{-1/100} L$.
While the specific choices of $n_0$ and $n_2$ are not used in this section, these choices were important for guaranteeing that $\Delta$ has small measure. 
This fact will be important in the next section where we analyze the measure of $F$.

\smallskip

\section{The Remaining Curve Pairs}
\label{FAnalysis}

Within this section, we estimate the $\pr{\mu \times \nu}$-measure of the set $F$.
Recall that $F$ is defined to contain the curve pairs that don't belong to $\widetilde H$ or $\widetilde D$, the sets that have already been analyzed, see \eqref{FDef}. 
Our main tool in this endeavor is the following technical lemma, 
which can be viewed as a type of density theorem on slices.   
This lemma will in turn be used to control the measure of $F$ by the measure of $\Delta$. 

\begin{lem}[Parameter mass]
\label{DeltaClusterLemma}
Let $\Delta \su \mathcal{E}$ be as in \eqref{DeltaDef}.
For any $\pr{e, \al} \in F$ as given in \eqref{FDef}, define the set
\begin{center}
{$\disp \Om_{e, \al} = \set{\al' \in A : \abs{\al - \al'} \le 10^4 r_{n_2-200}^- \textrm{ and } \pr{e, \al'} \in \Delta}.$}
\end{center}
Then $\nu\pr{\Om_{e, \al}} \gtrsim N^{-2/100} r_{n_2-200}^-$.
\end{lem}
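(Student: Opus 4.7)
The plan is to use the curve pair condition satisfied by $(e,\al)\in F$ to extract a substantial family of parameters $\al'$ near $\al$ for which $(e,\al')\in\Delta$, and then convert the resulting $\mu$-mass bound into a Lebesgue bound on $\Om_{e,\al}$.

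The first step is to unpack $(e,\al)\in F\subset\Cur_{n_2,M_2}$. Recalling $M_2=10^4/r_{n_2-200}^-$, Definition~\ref{curve_pairs_defn} supplies a scale $r\in[r_{n_2+100}^-,r_{n_2-100}^+]$ such that the annular curve double-sector $\mathcal{X}_{e,\al}(r,M_2/10^4)\setminus\mathcal{X}_{e,\al}(r_{n_2+100}^-,M_2/10^4)$ carries $\mu$-mass exceeding $N^{-1/100}r\,r_{n_2-200}^-/10^4$. Let $Z$ denote the intersection of this annulus with $E$. For $z\in Z$ with $z_1\neq e_1$, the strict monotonicity of $\vp'$ uniquely determines a parameter $\al'(z)$ with $|\al-\al'(z)|\le r_{n_2-200}^-$ and $z\in C_{e,\al'(z)}$, defining a measurable map $\pi\colon Z\to[\al-r_{n_2-200}^-,\al+r_{n_2-200}^-]$.

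Next, I would show $\pi(Z)\subset\Om_{e,\al}$ by checking that each pair $(e,\pi(z))$ lies in $P_{n_1}\setminus H\subset\Delta$. Since $|z-e|\in(r_{n_2+100}^-,r_{n_2-100}^+]$ and the indices $n_1,n_2$ are chosen in Section~\ref{decomp} so that $|n_1-n_2|\le 0.9N^{-7/100}N$, the distance bound falls inside $[r_{n_1+N^{-7/100}N}^-,r_{n_1-N^{-7/100}N}^+]$, which gives $(e,\pi(z))\in P_{n_1}$. For the claim $(e,\pi(z))\notin H$: if not, there would exist $(e,\al'')\in H_{n_0-N^{-3/100}N}$ with $|\pi(z)-\al''|<\tfrac12 r_{n_0-N^{-3/100}N+10}^-$, and the triangle inequality combined with the separation-of-scales inequality $r_{n_2-200}^-<\tfrac12 r_{n_0-N^{-3/100}N+10}^-$ (valid for $N$ large since $n_2-200>n_0-N^{-3/100}N+10$, using $n_2\ge n_0-0.9N^{-3/100}N-0.9N^{-7/100}N$) would force $|\al-\al''|<r_{n_0-N^{-3/100}N+10}^-$, contradicting $(e,\al)\notin\widetilde H$. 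Since $r_{n_2-200}^-\le 10^4 r_{n_2-200}^-$, this puts $\pi(z)\in\Om_{e,\al}$.

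It remains to show $|\pi(Z)|\gtrsim N^{-2/100}r_{n_2-200}^-$, which after the $|I|^{-1}$ renormalization defining $\nu$ gives the claimed bound. The strategy is to control the density of the pushforward $\pi_*\mu$ with respect to Lebesgue measure: for a small interval $I'\subset[\al-r_{n_2-200}^-,\al+r_{n_2-200}^-]$, the preimage $\pi^{-1}(I')$ sits inside the sub-bowtie $\mathcal{X}_{e,\al}(r,1/|I'|)$, and by Corollary~\ref{bowtiestripcontainment} this is contained in a curve strip $\Phi_{\al,+}^{-1}(J_{I'})$ of length $|J_{I'}|\lesssim\la r|I'|$. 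The hypothesis $(e,\al)\notin\widetilde D$ transfers via the standard-neighborhood structure to $(e,\al')\notin D_{n_2-N^{-10/100}N}$ for $\al'$ near $\al$, and Definition~\ref{hdSS} then bounds the mass of this strip by $N^{1/100}|J_{I'}|\lesssim N^{1/100}\la r|I'|$. Combining these yields a density estimate $d\pi_*\mu/dx\lesssim N^{1/100}\la r$, so that $|\pi(Z)|\ge\mu(Z)/(N^{1/100}\la r)\gtrsim N^{-2/100}r_{n_2-200}^-$.

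The main obstacle is scale-matching in this last step: the strip length $\la r|I'|$ must exceed $r_{n_2-N^{-10/100}N}^-$ for the non-membership $(e,\al')\notin D_{n_2-N^{-10/100}N}$ to apply, which restricts $|I'|$ from below and prevents a naive pointwise density bound. I expect to address this by running a dyadic or Vitali-style covering of the parameter window matched to the scale at which the $\widetilde D$ condition provides control, exploiting the $10^4$ slack in the definition of $\Om_{e,\al}$ to absorb the parameter drift incurred when transferring conditions from $(e,\al)$ to nearby $(e,\al')$.
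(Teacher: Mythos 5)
Your first half is sound and essentially mirrors the paper's mechanism in contrapositive form: extracting the scale $r$ from the curve-pair condition, noting that every $z$ in the annular sector lies on a unique $C_{e,\al'}$ with $\al'$ in the parameter window, and showing $(e,\al')\in P_{n_1}\setminus H\subset\Delta$ (using $|z-e|\in[r_{n_2+100}^-,r_{n_2-100}^+]\subset[r_{n_1+N^{-7/100}N}^-,r_{n_1-N^{-7/100}N}^+]$ and the $\widetilde H$-to-$H$ transfer). The gap is in the quantitative step, and it is not the minor technicality you flag at the end --- it is fatal to the route as proposed. The only density information you invoke is $(e,\al')\notin D_{n_2-N^{-10/100}N}$, obtained from $(e,\al)\notin\widetilde D$. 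But Definition~\ref{hdSS} at scale index $n_2-N^{-10/100}N$ constrains only strips of width at least $r_{n_2-N^{-10/100}N}^-$, and by the separation of scales \eqref{Sos} this threshold dominates $r_{n_2-200}^-$ (since $N^{-10/100}N\gg 200$), whereas every strip you can produce from the sector has width $|J_{I'}|\lesssim\la r|I'|\le\la\, r_{n_2-100}^+\cdot 2\cdot 10^4 r_{n_2-200}^-\ll r_{n_2-200}^-$, even when $I'$ is the \emph{entire} window $I_*$. So no dyadic or Vitali regrouping inside the window, and no use of the $10^4$ slack, can raise the strip width to the coarse threshold: the non-membership in $D_{n_2-N^{-10/100}N}$ is simply vacuous at every admissible width, and your pointwise bound $d\pi_*\mu/dx\lesssim N^{1/100}\la r$ has no support.

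The missing idea is that the usable density control comes from the \emph{fine}-scale set $D_{n_2+N^{-10/100}N}$, whose threshold $r_{n_2+N^{-10/100}N}^-$ is far below $r|I'|$ once $|I'|\ge r_{n_2+N^{-10/100}N}^-/r$; and non-membership there cannot be extracted from $\widetilde D$ alone --- it requires additionally ruling out $\Delta D$, i.e.\ it is only available at parameters $\al'$ with $(e,\al')\notin\Delta$, which are precisely the points \emph{outside} $\Om_{e,\al}$. This is why the paper covers $\Om_{e,\al}$ (compact, since $\Delta$ is parametrically closed) by finitely many disjoint intervals $I_k\su I_*$ of length at least $\max\{r_{n_2+N^{-10/100}N}^-/r,\,r\}$, each slightly enlarged so as to contain a point $\al_k\notin\Om_{e,\al}$; at such $\al_k$ one has $(e,\al_k)\notin D\cup\Delta D=D_{n_2+N^{-10/100}N}$, Corollary~\ref{bowtiestripcontainment} places the sector over $I_k$ inside $\Phi_{\al_k,+}^{-1}(J_k)$ with $r_{n_2+N^{-10/100}N}^-\le|J_k|\lesssim r|I_k|$, and summing the resulting bounds $\mu\le N^{1/100}|J_k|$ against the mass lower bound gives $\sum_k|I_k|\gtrsim N^{-2/100}r_{n_2-200}^-$, hence the claim (after disposing of the degenerate case $\Om_{e,\al}=I_*$, where no exterior points exist but the bound is immediate). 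Your pushforward framing could be repaired along exactly these lines, but as written the proposal lacks the essential use of $\Delta D$ via exterior points and the interval-length floor $r_{n_2+N^{-10/100}N}^-/r$, so the density estimate at the heart of your argument does not follow from the hypotheses you cite.
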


An overview  of the proof is as follows. 
To establish a lower bound on $ \nu\pr{\Om_{e, \al}}$, we first introduce a cover of the parameter set $\Om_{e, \al}$ by a finite collection of carefully chosen intervals.  
We then observe that, to each such interval $I_k$, the corresponding curve double-sector, $\set{y \in C_{e, \al'} : \al' \in I_k}$ intersected with the small ball, $B_r(e)$, is contained in a narrow strip described by the inverse image of an $r$-dilate of $I_k$,  an interval $J_k$.
Further, by arranging matters so that this curve strip is \textbf{not} of high density, we can bound the $\mu$-measure of the truncated curve sector above by the $\nu$-measure of the interval $J_k$ in the parameter space.  
The fact that $(e,\al)$ is a curve pair is crucial.  
Now we proceed with the proof.

\begin{proof}
Let $\pr{e,\al} \in F \su \Cur_{n_2, M_2}$.
If $\Om_{e, \al} = I_*:= \set{\al' \in A: \abs{\al - \al'} \le 10^4 r_{n_2-200}^-}$, then the result is immediate, so assume that $\Om_{e, \al} \ne I_*$.
By Definition~\ref{CircPairs} and \eqref{LipConstant}, there exists $r \in \brac{r_{n_2+100}^-, r_{n_2-100}^+}$ so that
\begin{equation}
\label{lowBdSectors}
\mu\pr{\mathcal{X}_{e, \al}\pr{r, 1/r_{n_2-200}^-} \setminus \mathcal{X}_{e, \al}\pr{r_{n_2+100}^-, 1/r_{n_2-200}^-}} 
> 10^{-4} r N^{-1/100} r_{n_2-200}^-.
\end{equation}
We fix this scale $r$.
\smallskip

Since $\Delta$ is the union of three parametrically closed sets, then $\Delta$ itself is parametrically closed, see Appendix \ref{apx} for details.
It follows that the set $\Om_{e, \al}$ is a compact subset of $A$.
Indeed, as a subset of $A$, $\Om_{e, \al}$ is bounded; further, since $\Om_{e, \al}$ is the continuous image of the closed set $\Delta$ under the projection map in the last coordinate, it is also closed.
As such, given an open cover of intervals contained in $\set{\al' : \abs{\al - \al'} \le 10^5 r_{n_2-200}^-}$, we may select a finite subcover of intervals $I_1, I_2, \ldots, I_K$ that cover $\Om_{e, \al}$ with the following properties:
\begin{enumerate}
\item[(i)]\label{coverProp}
$\disp \nu\pr{\bigcup_{k=1}^K I_k} \lesssim \nu\pr{\Om_{e, \al}}.$
\item[(ii)]
By taking an appropriate intersection, there is no loss in assuming that each interval $I_k$  is contained in the set $I_*$.
\item[(iii)]
By concatenating the overlapping parts, we may also assume that all intervals $I_k$ are disjoint with length at least $\max\set{r_{n_2+N^{-10/100}N}^-/r, r}$.
\item[(iv)]
Since $\Om_{e,\al} \ne I_*$, then by enlarging each set slightly, we may also assume that each $I_k$ contains a point in the complement of $\Om_{e,\al}$.
\end{enumerate}

We establish a lower bound on $\disp \nu\pr{\bigcup_{k=1}^K I_k}$.  
Before proceeding, we gather two observations.   
First, observe that since $\pr{e,\al} \in F$, then for every $\al' \in I_*$, $\pr{e, \al'} \notin H\cup D$. 
Indeed, by the definition of $F$ given in \eqref{FDef}, $\pr{e,\al} \notin \widetilde H$.
By \eqref{Sos} and the assumption that $n_2 \ge n_0 - 0.9 \pr{N^{-3/100} + N^{-7/100}} N$, we have $10^4 r_{n_2-200}^- \le \frac 1 2 r_{n_0 - N^{-3/100}N+10}^-$.
Therefore, since $\pr{e,\al} \notin \widetilde H$, see \eqref{wHDef}, then for every $\al' \in I_*$, $\pr{e,\al'} \notin H$, see \eqref{HDef}.
By analogy, since $\pr{e,\al} \notin \widetilde D$, see \eqref{wDDef}, and $10^4 r_{n_2-200}^- \le \frac 1 2 r_{n_2 - N^{-10/100}N+10}^-$, we also have that for every $\al'$ in $I_*$, $\pr{e, \al'} \notin D$, where $D$ is as defined in \eqref{DDef}.

Next, observe that 
\begin{equation}
\label{EcapCurvesandBalls}
E \cap C_{e, \al'} \cap \pr{B_r(e) \setminus B_{r_{n_2+ 100}^-}(e)} = \emptyset
\textrm{ for any } \al' \in I_*\setminus \bigcup_{k=1}^K I_k.
\end{equation}
Let $\al' \in I_*$ be a point that is not contained in $I_1, I_2, \ldots, I_K$.
By the discussion in the previous paragraph, $\pr{e, \al'} \notin H$.
Since $I_1, I_2, \ldots, I_K$ forms a cover for $\Om_{e, \al}$, then $\pr{e, \al'} \notin \Delta$.
From \eqref{DeltaDef}, we conclude that $\pr{e, \al'} \notin P_{n_1}$.
Looking at Definition~\ref{pmSC}, this means that 
$$E \cap C_{e, \al'} \cap \pr{B_{r_{n_1-N^{-7/100}N}^+}(e) \setminus B_{r_{n_1+ N^{-7/100}N}^-}(e)} = \emptyset.$$
Since $n_2 \in \brac{n_1 - 0.9 N^{-7/100} N, n_1 + 0.9 N^{-7/100} N}$ and we may assume that $N^{-7/100} N \ge 1000$, then $B_r(e) \setminus B_{r_{n_2+ 100}^-}(e) \su B_{r_{n_1-N^{-7/100}N}^+}(e) \setminus B_{r_{n_1+ N^{-7/100}N}^-}(e)$. 
In other words, \eqref{EcapCurvesandBalls} is verified. 
\smallskip

With these observations in tow, we turn to establishing upper and lower bounds on the measure of the set $\set{y \in C_{e, \al'} : \al' \in I_*} \cap \pr{B_r(e) \setminus B_{r_{n_2+ 100}^-}(e)}$.  
From \eqref{circSectors}, it is immediate that
$$\mathcal{X}_{e, \al}\pr{r, 1/r_{n_2-200}^-} \setminus \mathcal{X}_{e, \al}\pr{r_{n_2+100}^-, 1/r_{n_2-200}^-} \su \set{y \in C_{e, \al'} : \al' \in I_*} \cap \pr{B_r(e) \setminus B_{r_{n_2+ 100}^-}(e)}.$$
Since $\mu$ is supported on $E$, we deduce from \eqref{EcapCurvesandBalls} that
\begin{align*}
& \mu\pr{\mathcal{X}_{e, \al}\pr{r, 1/r_{n_2-200}^-} \setminus \mathcal{X}_{e, \al}\pr{r_{n_2+100}^-, 1/r_{n_2-200}^-}} \\
\le& \mu\pr{\set{y \in C_{e, \al'} : \al' \in \bigcup_{k=1}^K I_k} \cap \pr{B_r(e) \setminus B_{r_{n_2+ 100}^-}(e)}} \\
\le& \sum_{k=1}^K  \mu\pr{\set{y \in C_{e, \al'} : \al' \in I_k} \cap \pr{B_r(e) \setminus B_{r_{n_2+ 100}^-}(e)}}.
\end{align*}
Combining this inequality with \eqref{lowBdSectors} shows that
\begin{align}
\label{lowBdSectCons}
10^{-4} r N^{-1/100} r_{n_2-200}^-
\le \sum_{k=1}^K  \mu\pr{\set{y \in C_{e, \al'} : \al' \in I_k} \cap \pr{B_r(e) \setminus B_{r_{n_2+ 100}^-}(e)}}.
\end{align}

Now choose $k \in \set{1, \ldots, K}$.
Observe that for any $\al_k \in I_k$, since $I_k \su \brac{\al_k - \abs{I_k}, \al_k + \abs{I_k}}$, then we have
\begin{equation*}
\set{y \in C_{e, \al'} : \al' \in I_k} \cap \pr{B_r(e) \setminus B_{r_{n_2+ 100}^-}(e)} 
\su \set{y \in C_{e, \al'} : \abs{\al' - \al_k} \le \abs{I_k}} \cap B_r(e) 
= \mathcal{X}_{e, \al}\pr{r, \frac 1 {\abs{I_k}}},
\end{equation*}
where we used \eqref{circSectors} to reach the last equality. 
To apply Corollary~\ref{bowtiestripcontainment}, we need to check that $\abs{I_k} + r < \min\set{10^{-5} + 2^{-100}, \frac 1{\sqrt 2 \la}}$.
By property (ii) and Remark \ref{n2Smallness}, $\abs{I_k} \le \abs{I_*} \le 2 \cdot 10^4 r_{n_2 - 200}^- \le 2^{-185}$ while $r \le r_{n_2-100}^+ \le 2^{-300}$.
Since $\la \le 2^{35}$ by Remark \ref{lambda assumption}, then the assumptions of Corollary~\ref{bowtiestripcontainment} are satisfied and we see that
\begin{equation}
\label{WedgeInStrip}
\begin{aligned}
\set{y \in C_{e, \al'} : \al' \in I_k} \cap \pr{B_r(e) \setminus B_{r_{n_2+ 100}^-}(e)} 
\su \mathcal{X}_{e, \al}\pr{r, \frac 1 {\abs{I_k}}}
\su \Phi_{\al_k,+}^{-1}\pr{J_k},
\end{aligned}
\end{equation}
where $J_k \su \R$ is an interval centered about $\Phi_{\al_k}(e)$ with $\abs{J_k} = 4 \sqrt 2 \la\pr{\abs{I_k} + r} r$.
In fact, by the size condition on $I_k$ described in property (iii), $r_{n_2 + N^{-10/100}N}^- \le \abs{J_k} \lesssim \abs{I_k} r$.
See Figure~\ref{BowtieInCurveStrip} for a depiction of this second set inclusion.

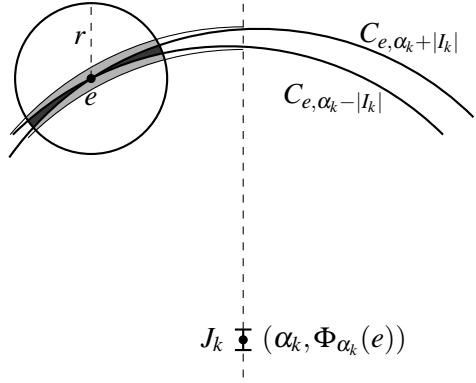
\begin{figure}
\begin{tikzpicture}
\fill[black, opacity = 0.3]
(-2.72644, 2.77687) arc (133: 104.9:4cm) -- 
(-1.03189, 3.71461) arc (14.51: 31.23:1cm) -- 
(-1.14494, 3.98264) arc (106.6: 135.6: 4cm) -- 
   cycle;
\draw[thick] (3.0284, 2.9518) arc (45:145:4cm);
\draw (0, 4.15) arc (90:140:4cm);
\draw (0, 3.85) arc (90:135:4cm);
\draw[thick] (2.6284, 2.720386) arc (45:135:4cm);
\draw[thick] (-1, 3.4641) arc (0:360:1cm);
\draw [fill=black] ( -2,3.4641) circle (1.5pt);
\draw[color=black] (-2,3.2) node {$e$};
\draw[dashed] ( -2,3.4641) -- ( -2, 4.4641);
\draw[color=black] (-2.15, 4) node {$r$};
\fill[black, opacity = 0.7]
(-2,3.4641) arc (123.367: 109: 4cm) -- 
(-1.10262, 3.90536) arc (26.18: 19.56:1cm) -- 
(-1.05773,3.79894) arc (102.38: 116.74:4cm) -- 
   cycle;
\fill[black, opacity = 0.7]
(-2,3.4641) arc (123.367: 137.728: 4cm) -- 
(-2.75985, 2.814) arc (220.55: 213.925: 1cm) -- 
(-2.82977, 2.906) arc (131.105: 116.74: 4cm) -- 
   cycle;
\draw [fill=black] (0,0) circle (1.5pt);
\draw[color=black] (-0.4, 0) node {$J_k$};
\draw[color=black] (1.2, 0) node {$\pr{\al_k, \Phi_{\al_k}(e)}$};
\draw[dashed] (0,-.5) -- (0, 4.5);
\draw[thick, -|] (0,0) -- (0, .15);
\draw[thick, -|] (0,0) -- (0, -.15);
\draw[color=black] (1.2,3.2) node {$C_{e,\al_k - \abs{I_k}}$};
\draw[color=black] (2.2,4) node {$C_{e,\al_k + \abs{I_k}}$};
\end{tikzpicture}
\centering
\caption{The darkly-shaded curve double-sector around $e$ is defined by $\set{y \in C_{e, \al'} : \abs{\al' - \al_k} \le \abs{I_k}} \cap B_r(e)$, while the lightly-shaded region that bounds it is the intersection of the curve strip $\Phi_{\al_k,+}^{-1}\pr{J_k}$ with $B_r(e)$.
The interval $J_k$ is pictured along $x = \al_k$ and is centered at the point $\pr{\al_k, \Phi_{\al_k}(e)}$.}
\label{BowtieInCurveStrip}
\end{figure}

By property (iv), there exists $\al_k \in I_k$ such that $\pr{e, \al_k} \notin \Delta$.
Referring to \eqref{DeltaDef}, this means that $\pr{e, \al_k} \notin \Delta D$.
As discussed above, $\pr{e, \al_k} \notin D$ as well, so we deduce that $\pr{e, \al_k} \notin D_{n_2 + N^{-10/100}N}$.
From Definition~\ref{hdSS} applied with $n= n_2 + N^{-10/100}N$, we recall that whenever $J \su \R$ with $\Phi_{\al_k}(e) \in J$ and $\abs{J} \ge r_{n_2 + N^{-10/100}N}^-$, we have $\mu\pr{\Phi_{\al_k,+}^{-1}\pr{J}} \le N^{1/100} \abs{J}$.
In particular, if we take $J =J_k$
as defined in the previous paragraph, an interval centered about $\Phi_{\al_k}(e)$ with $r_{n_2+N^{-10/100}N}^- \le \abs{J_k} \lesssim r \abs{I_k}$, then it follows from \eqref{WedgeInStrip} that

\begin{equation}
\label{muUpperBd}
\begin{aligned}
\mu\pr{\set{y \in C_{e, \al'} : \al' \in I_k} \cap \pr{B_r(e) \setminus B_{r_{n_2+ 100}^-}(e)}} 
&\le \mu\pr{\Phi_{\al_k,+}^{-1}\pr{J_{k}}} \le N^{1/100} \abs{J_k} \\
&\lesssim N^{1/100} r \abs{I_k}.
\end{aligned}
\end{equation}
Combining \eqref{lowBdSectCons} with \eqref{muUpperBd} shows that
\begin{align*}
10^{-4} N^{-1/100} r_{n_2-200}^-
&\lesssim N^{1/100} \sum_{k=1}^K \abs{I_k}
\simeq
N^{1/100} \nu\pr{\bigcup_{k=1}^K I_k}
\lesssim N^{1/100} \nu\pr{\Om_{e,\al}},
\end{align*}
where we have applied property (i) in the last inequality. 
The conclusion of Lemma~\ref{DeltaClusterLemma} follows.
\end{proof}

Using the lemma, we show that the points in $F$ are close to many points in $\Delta$.
The main tool in this proof is another application of the Hardy-Littlewood maximal inequality.

\begin{prop}[$F$ has small measure]
\label{FProp}
For $F$ as given in \eqref{DeltaDef}, $\pr{\mu \times \nu}\pr{F} \lesssim N^{-1/100} L$.
\end{prop}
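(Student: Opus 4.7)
The plan is to combine Lemma~\ref{DeltaClusterLemma} with a one-dimensional Hardy-Littlewood maximal inequality applied in the parameter variable $\al$, and then to feed the result into the bound $\pr{\mu \times \nu}\pr{\Delta} \lesssim N^{-3/100} L$ from \eqref{DelBd}. For fixed $e \in E$, write $F_e = \set{\al \in A : \pr{e, \al} \in F}$ and $\Delta_e = \set{\al \in A : \pr{e, \al} \in \Delta}$. For each $\al \in F_e$, Lemma~\ref{DeltaClusterLemma} furnishes a subset $\Om_{e, \al} \su \Delta_e \cap \brac{\al - 10^4 r_{n_2-200}^-,\, \al + 10^4 r_{n_2-200}^-}$ with $\nu\pr{\Om_{e, \al}} \gtrsim N^{-2/100} r_{n_2-200}^-$. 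This is a uniform lower bound on the density of $\Delta_e$ inside a fixed-scale window centered at each point of $F_e$.

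Step one: convert this density bound into a maximal-function statement. Let $\mathrm{M}$ denote the one-dimensional Hardy-Littlewood maximal operator with respect to $\nu$. For any $\al \in F_e$, the above gives
\begin{equation*}
\mathrm{M} \mathbf{1}_{\Delta_e}\pr{\al}
\;\ge\; \frac{\nu\pr{\Delta_e \cap \brac{\al - 10^4 r_{n_2-200}^-,\, \al + 10^4 r_{n_2-200}^-}}}{2 \cdot 10^4 r_{n_2-200}^-}
\;\gtrsim\; N^{-2/100}.
\end{equation*}
Hence $F_e \su \set{\al : \mathrm{M}\mathbf{1}_{\Delta_e}\pr{\al} \gtrsim N^{-2/100}}$, and the weak-type $(1,1)$ maximal inequality yields
\begin{equation*}
\nu\pr{F_e} \;\lesssim\; N^{2/100} \, \nu\pr{\Delta_e}.
\end{equation*}

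Step two: integrate against $\mu$ and apply Fubini together with \eqref{DelBd}:
\begin{equation*}
\pr{\mu \times \nu}\pr{F}
\;=\; \int_E \nu\pr{F_e}\, d\mu\pr{e}
\;\lesssim\; N^{2/100} \int_E \nu\pr{\Delta_e}\, d\mu\pr{e}
\;=\; N^{2/100}\, \pr{\mu \times \nu}\pr{\Delta}
\;\lesssim\; N^{-1/100} L,
\end{equation*}
which is exactly the desired bound.

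The serious obstacle in this whole chain was the density-on-slices statement, Lemma~\ref{DeltaClusterLemma}, which has already been proved above; once one has that uniform lower density of $\Delta_e$ around every point of $F_e$, the passage to a measure estimate for $F_e$ via the weak $(1,1)$ maximal inequality, and then to the product-measure estimate by Fubini, is essentially automatic. The only bookkeeping to be careful with is that the window scale $10^4 r_{n_2-200}^-$ in the density bound is the \emph{same} scale that appears in the denominator of the maximal average, which is what makes the threshold $N^{-2/100}$ (independent of $r_{n_2-200}^-$) drop out cleanly and ultimately combines with the $N^{-3/100}$ savings in $\pr{\mu \times \nu}\pr{\Delta}$ to give the net $N^{-1/100}$.
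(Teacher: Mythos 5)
Your proposal is correct and follows essentially the same route as the paper: Lemma~\ref{DeltaClusterLemma} gives the uniform lower density of $\Delta_e$ in the fixed window around each $\al \in F_e$, this is converted into a pointwise lower bound on a Hardy--Littlewood maximal function (the paper phrases it via the restricted measure $\nu_{e,\Delta}$ rather than $\mathbf{1}_{\Delta_e}\,d\nu$, which is the same object), the weak-type $(1,1)$ inequality gives $\nu(F_e) \lesssim N^{2/100}\nu(\Delta_e)$, and integrating in $e$ together with \eqref{DelBd} finishes the argument.
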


\begin{proof}
Given $e \in E$, define $\nu_{e, \Delta}$ to be the restriction of the measure $\nu$ to the set $\Delta_e :=\set{\al' \in A : \pr{e, \al'} \in \Delta}$.
If $\pr{e, \al} \in F$, then Lemma~\ref{DeltaClusterLemma} shows that 
\begin{align*}
\mathrm{M} \nu_{e, \Delta}\pr{\al} 
&:= \sup_{r > 0} \frac{\nu_{e, \Delta}\pr{\brac{\al - r, \al + r}}}{2r}
\ge \frac{\nu_{e, \Delta}\pr{\brac{\al - 10^4 r_{n_2-200}^-, \al + 10^4 r_{n_2-200}^-}}}{2 \cdot 10^4 r_{n_2-200}^-} \\
&= \frac{\nu\pr{\Om_{e,\al}}}{2 \cdot 10^4 r_{n_2-200}^-}
\gtrsim N^{-2/100}.
\end{align*}
It follows that 
$$F_e := \set{\al : \pr{e, \al} \in F}   \subset    \set{\al : \mathrm{M} \nu_{e, \De}\pr{\al} \gtrsim N^{-2/100}},$$
and so, by the Hardy-Littlewood maximal inequality, we see that for any $e \in E$,
\begin{align*}
\nu\pr{F_e}
\lesssim \abs{\set{\al : \mathrm{M} \nu_{e, \De}\pr{\al} \gtrsim N^{-2/100}}}
\lesssim N^{2/100} \nu_{e, \De}\pr{\Delta_e},
\end{align*}
where we have used the fact that $\nu$ is the normalized Lebesgue measure restricted to $A$ (see Section~\ref{measObs}).

Integrating in $E$ then shows that
\begin{align*}
\pr{\mu \times \nu}\pr{F} \lesssim N^{2/100} \pr{\mu \times \nu }\pr{\Delta}.
\end{align*}
An application of \eqref{DelBd} completes the proof.
\end{proof}

\smallskip

\section{Completion of the Proof}
\label{conclusion}

To prove Theorem~\ref{QCBT}, we need to show that $\FavC(E) \lesssim N^{-1/100} L$.
Referring to the decomposition of $E$ and $\mathcal{E}$ described by Figure~\ref{trees} and using properties of the Favard curve length, we have that
\begin{align}
\label{FavE}
\FavC(E)
&= \FavC\pr{\mathcal{E}}
\le \FavC\pr{\NCur_{n_2, M_2}}  + \FavC\pr{\Cur_{n_2, M_2}}.
\end{align}
Since $N_\al = \set{e : \pr{e, \al} \in \NCur_{n_2, M_2}} \su E$ and $N_\al = G_\al \cup K_\al$, then by Definition~\ref{FavC} (see also \eqref{3dFavC}),
\begin{equation*}
\begin{aligned}
\FavC\pr{\NCur_{n_2, M_2}} 
&= \int_A \abs{\Phi_\al\pr{N_\al}} d\al
\le \int_A \abs{\Phi_\al\pr{G_\al}} d\al + \int_A \abs{\Phi_\al\pr{K_\al}} d\al \\
&\lesssim \int_A \mu \pr{G_\al} d\al + \int_A \mu\pr{K_\al} d\al,
\end{aligned}
\end{equation*}
where we have applied Lemma~\ref{smallProjProp} in the second line.
In Section~\ref{goodPoints}, we showed via Propositions~\ref{EalProp} and \ref{KalProp} that the sets $G_\al$ and $K_\al$, respectively, have small $\mu$-measures.
Substituting these bounds into the previous inequality and using the boundedness of $A$ shows that
\begin{equation}
\label{NonCirc}
\FavC\pr{\NCur_{n_2, M_2}} \lesssim N^{-1/100} L.
\end{equation}
Using the decomposition of $\Cur_{n_2, M_2} \su \mathcal{E}$ from \eqref{CurDecomp}, then applying Corollary~\ref{smallFavard}, we see that
\begin{equation*}
\begin{aligned}
\FavC\pr{\Cur_{n_2, M_2}} 
&\le \FavC\pr{\widetilde H} + \FavC\pr{\widetilde D} + \FavC\pr{F} \\
&\lesssim \FavC\pr{\widetilde H} + \FavC\pr{\widetilde D} + \pr{\mu \times \nu}\pr{F} \\
&\lesssim N^{-1/100} L + N^{-1/100} L + N^{-1/100} L,
\end{aligned}
\end{equation*}
where we have invoked Propositions~\ref{FavHProp}, \ref{FavDProp}, and \ref{FProp}, respectively, in the last line.
Therefore, the inequality above reduces to
\begin{equation}
\label{circPairs}
\FavC\pr{\Cur_{n_2, M_2}} \lesssim N^{-1/100} L.
\end{equation}
Substituting \eqref{NonCirc} and \eqref{circPairs} into \eqref{FavE} leads to the conclusion of the proof of Theorem~\ref{QCBT}.

\smallskip

\begin{appendix}

\section{Technical Results}
\label{apx}

In this section, we prove that the sets of high multiplicity pairs and high density pairs are closed.  
We also show that $\Delta$ is parametrically closed.


\begin{lem}[$H_n$ is closed]
\label{HnClosed}
Let $H_n$ be as in Definition~\ref{hmSC}. 
For any $1 \le n \le N$, $H_n$ closed.
\end{lem}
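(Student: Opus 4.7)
The plan is to establish closedness by a direct sequential argument combined with diagonalization, using compactness of $E$ to extract convergent subsequences of the high-multiplicity configurations.

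Suppose $\{(e^{(k)}, \alpha^{(k)})\}_{k \in \N} \subset H_n$ is a sequence converging to some limit $(e, \alpha) \in \R^2 \times \R$. First I would verify that $(e, \alpha) \in \mathcal{E}$: since $E \subset [0,1]^2$ is compact we have $e \in E$, since $A = [0,1] - I$ is closed and bounded we have $\alpha \in A$, and since $I$ is closed, the condition $e_1^{(k)} - \alpha^{(k)} \in I$ passes to the limit, giving $e_1 - \alpha \in I$ and hence $\Phi_\alpha(e) \neq \emptyset$.

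Next, for each $k$, by Definition~\ref{hmSC} there exist points $y_1^{(k)}, \ldots, y_{\lceil N^{1/100} \rceil}^{(k)} \in E \cap C_{e^{(k)}, \alpha^{(k)}}$ that are pairwise $r_n^-$-separated. Writing $y_j^{(k)} = (\alpha^{(k)} + t_j^{(k)}, \Phi_{\alpha^{(k)}}(e^{(k)}) + \varphi_+(t_j^{(k)}))$ for some $t_j^{(k)} \in I_+$, I would use compactness of $E$ (together with compactness of $I_+$) and a finite diagonal argument over $j = 1, \ldots, \lceil N^{1/100} \rceil$ to pass to a subsequence along which every $y_j^{(k)}$ converges to some $y_j \in E$ and every $t_j^{(k)}$ converges to some $t_j \in I_+$. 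Continuity of the coordinate maps, together with continuity of $\varphi_+$ on the closed interval $I_+$, then gives
\begin{equation*}
y_j = (\alpha + t_j,\, \Phi_\alpha(e) + \varphi_+(t_j)),
\end{equation*}
so that $y_j \in C_{e, \alpha}$. The $r_n^-$-separation is a closed condition on pairs, so $|y_i - y_j| \ge r_n^-$ for $i \neq j$ in the limit, and thus $(e, \alpha) \in H_n$.

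The main technical point to watch is ensuring that the limit configuration actually lies on the limit curve $C_{e, \alpha}$ and not merely on its extended version $C_{e^{(k)}, \alpha^{(k)}}$ evaluated with a limiting parameter outside $I$. This is handled by the fact that $I_+$ is closed, so the parameters $t_j^{(k)} \in I_+$ have limits in $I_+$, and by the explicit parametrization \eqref{CealDef} which is jointly continuous in $(\alpha, t)$ via the $C^1$ function $\varphi_+$. A completely analogous argument, with $\varphi_+$ replaced by the measure-theoretic upper semicontinuity of $\mu$ on closed curve strips, handles Lemma~\ref{HnClosed}'s companion statement for $D_n$ and the parametric closedness of $\Delta$.
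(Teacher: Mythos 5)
Your proof is correct and follows essentially the same route as the paper's: sequential characterization of closedness, a finite diagonalization over the $N^{1/100}$ marked points to extract a single subsequence along which they all converge, and passing both the $r_n^-$-separation and curve membership to the limit. Your treatment is in fact slightly more careful than the paper's in one respect — you track the parameters $t_j^{(k)} \in I_+$ explicitly and use closedness of $I_+$ and continuity of $\varphi_+$ to justify that the limit points lie on $C_{e,\alpha}$, whereas the paper simply asserts $C_m \to C_{e,\alpha}$ and deduces $e_j \in C_{e,\alpha}$; both are fine, but your version makes the "limit lies on the limit curve" step airtight.
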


\begin{proof}
Fix $n$ and let $\set{\pr{e_m, \al_m}}_{m=1}^\iny \su H_n$ be a sequence of points such that $\pr{e_m, \al_m} \to \pr{e, \al}$.
We need to show that $C_{e,\al}$ is of high multiplicity at a scale index at most $n$. 
Since $\pr{e_m, \al_m} \in H_n$, then the curve $C_m := C_{e_m, \al_m}$ contains $N^{1/100}$ points that are $r_{n}^-$-separated.
Let $e_{m,1}, e_{m, 2}, \ldots, e_{m, N^{1/100}}$ denote the points on the curve $C_m$, where they are ordered so that $e_{m, i}$ is to the left of $e_{m, j}$ whenever $i < j$.
Moreover, for any $i = 1, \ldots, N^{1/100} - 1$, $\abs{e_{m,i+1} - e_{m,i}} \ge r_{n}^-$. 
Since $\set{e_{m,1}}_{m=1}^\iny \su E$ is bounded (since $E$ is compact), then it contains a convergent subsequence, $\set{e_{m(1,k),1}}_{k=1}^\iny$.
Since $\set{e_{m(1,k), 2}}_{k=1}^\iny$ is also bounded, then it too contains a convergent subsequence, $\set{e_{m(2,k), 2}}_{k=1}^\iny$.
Continuing on with this diagonalization process, we extract a subsequence $\set{m_k}_{k=1}^\iny \su \N$, where $m_k = m(N^{1/100}, k)$, so that for every $j = 1, \ldots, N^{1/100}$, $\disp \lim_{k \to \iny}e_{m_k, j} = e_j$. 
Since $E$ is compact, then $e_j \in E$ for each $j$.
We first show that $\set{e_j}_{j=1}^{N^{1/100}}$ is $r_{n}^-$-separated.
Let $\eps > 0$.
There exists $K \in \N$ so that whenever $k \ge K$, we have $\abs{e_{m_k, j} - e_j} < \frac \eps 2$ for any $j \in \set{1, \ldots, N^{1/100}}$.
It follows that for any $j \in \set{1, \ldots, N^{1/100}-1}$,
\begin{align*}
r_{n}^-
&\le \abs{e_{m_K, j+1} - e_{m_K, j}} 
= \abs{e_{m_K,j+1} - e_{j+1} + e_{j+1} - e_{m_K, j} + e_j - e_{j} } \\
&\le \abs{e_{m_K,j+1} - e_{j+1}} + \abs{e_{j+1}  - e_{j} } + \abs{e_{m_K, j} - e_j} 
< \abs{e_{j+1}  - e_{j} } + \eps.
\end{align*}
Since $\eps > 0$ was arbitrary, we conclude that $\abs{e_{j+1}  - e_{j} } \ge r_{n}^-$, showing that $\set{e_j}_{j=1}^{N^{1/100}}$ is $r_{n}^-$-separated.
Finally, since $\pr{e_m, \al_m} \to \pr{e, \al}$, then $C_m \to C_{e, \al}$.
In particular, $C_{m_k} \to C_{e,\al}$.
Since each $e_{m_k, j} \in C_{m_k}$, we deduce that $e_j \in C_{e, \al}$.
It follows that $\set{e_j}_{j=1}^{N^{1/100}} \su C_{e,\al}$, completing the proof.
\end{proof}


\begin{lem}[$D_n$ is closed]
\label{DnClosed}
Let $D_n$ be as in Definition~\ref{hdSS}.
For any $1 \le n \le N$, $D_n$ is closed.  
\end{lem}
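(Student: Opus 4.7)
The plan is to take a convergent sequence $(e_m,\alpha_m) \to (e,\alpha)$ with $(e_m,\alpha_m) \in D_n$, extract witnessing intervals $J_m$ that converge to a limiting interval $J$, and then verify that $(J,\alpha)$ witnesses $(e,\alpha) \in D_n$. First, the limit pair lies in $\mathcal{E}$: the conditions $e_m \in E$, $\alpha_m \in A$, and $e_{m,1}-\alpha_m \in I$ all pass to the limit since $E$, $A$, and $I$ are closed.

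For each $m$, Definition~\ref{hdSS} gives an interval $J_m$ with $|J_m| \ge r_n^-$, $C_{e_m,\alpha_m} \subset \Phi_{\alpha_m,+}^{-1}(J_m)$, and $\mu(\Phi_{\alpha_m,+}^{-1}(J_m)) \ge N^{1/100}|J_m|$. Since $\mu(E) \le L$, the length bound $|J_m| \le L\,N^{-1/100}$ holds uniformly. Because $e_m \in C_{e_m,\alpha_m} \subset \Phi_{\alpha_m,+}^{-1}(J_m)$, the point $\Phi_{\alpha_m}(e_m)$ belongs to $J_m$; by continuity of $\varphi$ this sequence converges to $\Phi_\alpha(e)$, so the endpoints of $J_m$ remain in a compact set. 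Passing to a subsequence, write $J_m = [a_m,b_m]$ with $a_m \to a$, $b_m \to b$, and set $J = [a,b]$. Then $|J| = \lim|J_m| \ge r_n^-$, and since $\Phi_{\alpha_m}(e_m) \in J_m$ the limit $\Phi_\alpha(e)$ lies in $J$, so $C_{e,\alpha} = \Phi_{\alpha,+}^{-1}(\Phi_\alpha(e)) \subset \Phi_{\alpha,+}^{-1}(J)$.

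The main obstacle is showing that the lower bound on $\mu$ survives the limit. I would first establish that the curve strips converge in Hausdorff distance. Parametrize $\Phi_{\alpha,+}^{-1}(J)$ by the continuous map $f_\alpha : I_+ \times J \to \mathbb{R}^2$, $f_\alpha(t,\beta) = (\alpha+t,\beta+\varphi_+(t))$. The joint continuity of $(\alpha,t,\beta) \mapsto f_\alpha(t,\beta)$ on the compact set $\mathbb{R}\times I_+ \times [-C,C]$, combined with the Hausdorff convergence $J_m \to J$ of closed intervals, yields
\begin{equation*}
\Phi_{\alpha_m,+}^{-1}(J_m) \longrightarrow \Phi_{\alpha,+}^{-1}(J)
\quad \text{in Hausdorff distance.}
\end{equation*}
Then I would use outer regularity of the finite Radon measure $\mu$: for each $\varepsilon>0$, choose an open set $U \supset \Phi_{\alpha,+}^{-1}(J)$ with $\mu(U) \le \mu(\Phi_{\alpha,+}^{-1}(J))+\varepsilon$. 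Hausdorff convergence forces $\Phi_{\alpha_m,+}^{-1}(J_m) \subset U$ for all large $m$, so
\begin{equation*}
N^{1/100}|J_m| \le \mu(\Phi_{\alpha_m,+}^{-1}(J_m)) \le \mu(U) \le \mu(\Phi_{\alpha,+}^{-1}(J))+\varepsilon.
\end{equation*}
Letting $m \to \infty$ and then $\varepsilon \to 0$ gives $\mu(\Phi_{\alpha,+}^{-1}(J)) \ge N^{1/100}|J|$, completing the verification that $(e,\alpha) \in D_n$.
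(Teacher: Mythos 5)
Your proof follows essentially the same structure as the paper's (extract witnessing intervals $J_m$, pass to a subsequence so the endpoints converge, set $J = [a,b]$, verify the three defining conditions for $D_n$). Where it differs is in the key step: the paper dispatches the measure inequality with a one-line ``taking limits and appealing to continuity shows that $\mu(\Phi_{\alpha,+}^{-1}(J)) \ge N^{1/100}|J|$,'' whereas you actually prove it, by observing that the compact curve strips $\Phi_{\alpha_m,+}^{-1}(J_m)$ converge in Hausdorff distance to $\Phi_{\alpha,+}^{-1}(J)$ and then invoking outer regularity of the finite Radon measure $\mu$ to get the upper semi-continuity $\limsup_m \mu(\Phi_{\alpha_m,+}^{-1}(J_m)) \le \mu(\Phi_{\alpha,+}^{-1}(J))$. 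This is the right way to fill the gap: naive ``continuity of $\mu$'' is false for Borel measures in general, and it is precisely the compactness of the strips plus outer regularity that saves the limit. You also give a cleaner argument for $C_{e,\alpha} \subset \Phi_{\alpha,+}^{-1}(J)$, via $\Phi_{\alpha_m}(e_m) \in J_m \Rightarrow \Phi_\alpha(e) \in J$, rather than the paper's unexplained ``another limiting argument.'' One small slip: you call $\mathbb{R} \times I_+ \times [-C,C]$ compact, which it is not since the $\alpha$-factor is unbounded; but since $\alpha_m \to \alpha$ the parameters lie in a compact neighborhood of $\alpha$ for $m$ large, so the joint-continuity argument goes through after restricting to that neighborhood. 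With that fix, your proof is correct and in fact more complete than the one in the paper.
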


\begin{proof}
Fix n and let $\set{\pr{e_m, \al_m}}_{m=1}^\iny \su D_n$ so that $\pr{e_m, \al_m} \to \pr{e, \al}$.
Since $\pr{e_m, \al_m} \in D_n$, then there exists $J_m$ with $\abs{J_m} \ge r_{n}^-$ and $\mu\pr{\Phi_{\al_m,+}^{-1}\pr{J_m}} \ge N^{1/100} \abs{J_m}$.
Write $J_m = \brac{a_m, b_m}$.
Since $\set{a_m}_{m=1}^\iny \su \R$ is bounded (since $E$ is compact), then there exists a convergence subsequence $\set{a_{m(1,k)}}_{k=1}^\iny$.
Similarly, $\set{b_{m(1,k)}}_{k=1}^\iny \su \R$ is bounded, so there is a convergent subsequence $\set{b_{m(2, k)}}_{k=1}^\iny$.
With $m_k = m(2, k)$, both $\set{a_{m_k}}_{k=1}^\iny$ and $\set{b_{m_k}}_{k=1}^\iny$ are convergent sequences in $\R$, with limits $a$ and $b$, respectively.
Define $J = \brac{a, b}$.
Since $b_m - a_m = \abs{J_m} \ge r_{n}^-$ for all $m \in \N$, then taking limits shows that $b - a = \abs{J} \ge r_{n}^-$ as well. 
Taking limits and appealing to continuity also shows that $\mu\pr{\Phi_{\al,+}^{-1}\pr{J}} \ge N^{1/100} \abs{J}$.
In particular, $\Phi_{\al,+}^{-1}\pr{J}$ has high density at scale index $n$.
Since $C_{e_m, \al_m} \su \Phi_{\al_m,+}^{-1}(J_m)$ for each $m \in \N$, then another limiting argument shows that $C_{e, \al} \su \Phi_{\al,+}^{-1}(J)$, completing the proof.
\end{proof}

\begin{lem}[$\Delta$ is parametrically closed]
For $\Delta$ be as defined in \eqref{DeltaDef}, $\Delta$ is parametrically closed.
\end{lem}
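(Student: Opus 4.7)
I would first unpack the meaning of \emph{parametrically closed}: a set $\mathcal{S} \subset E \times \R$ is parametrically closed if for every fixed $e \in E$, the slice $\mathcal{S}_e := \{\alpha \in \R : (e,\alpha) \in \mathcal{S}\}$ is closed in $\R$. Equivalently, its complement in $E \times \R$ is parametrically open in the sense of Definition of the parametric neighborhood $\mathcal{M}_\eps$. Since a finite union of sets with closed slices has closed slices, and $\Delta = \Delta H \cup (P_{n_1} \setminus H) \cup \Delta D$, it suffices to prove each of the three components is parametrically closed.

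For $\Delta D = D_{n_2 + N^{-10/100}N} \setminus D$, observe that Lemma~\ref{DnClosed} shows $D_{n_2 + N^{-10/100}N}$ is closed in $\R^3$, while $D = \mathcal{N}_{\frac 1 2 r_{n_2 - N^{-10/100}N + 10}^-}(D_{n_2 - N^{-10/100} N})$ is an ordinary $\eps$-neighborhood defined with strict inequality, hence open in $\R^3$. Therefore $\Delta D$ is closed in $\R^3$, so in particular has closed slices. The analogous argument for $\Delta H = H_{n_0 + N^{-3/100}N} \setminus H$ requires one small adjustment: Lemma~\ref{HnClosed} gives that $H_{n_0 + N^{-3/100}N}$ is closed in $\R^3$ (hence has closed slices), while $H$ is only a \emph{parametric} neighborhood $\mathcal{M}_{\frac 1 2 r_{n_0 - N^{-3/100} N + 10}^-}(H_{n_0 - N^{-3/100}N})$ defined with strict inequality in the $\alpha$-coordinate alone. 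Its slice at any fixed $e$ is an open set in $\R$, so $H^c$ has closed slices, and $\Delta H = H_{n_0 + N^{-3/100}N} \cap H^c$ has closed slices as well.

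The remaining piece is $P_{n_1} \setminus H$, and here the main task is to show that $P_{n_1}$ itself is closed in $\R^3$, which is not recorded in the appendix. I would mimic the diagonal-extraction argument used in the proof of Lemma~\ref{HnClosed}: take any sequence $(e_m, \al_m) \to (e, \al)$ with $(e_m,\al_m) \in P_{n_1}$, so by Definition~\ref{pmSC} there exist witness points $y_m \in E \cap C_{e_m, \al_m}$ with $|y_m - e_m| \in [r_{n_1+ N^{-7/100}N}^-, r_{n_1-N^{-7/100}N}^+]$. Since $E$ is compact and the bound on $|y_m - e_m|$ confines the witnesses to a bounded set, a subsequence converges, $y_{m_k} \to y \in E$; continuity of the parametrization in \eqref{CealDef} forces $y \in C_{e,\al}$, and passing to the limit in the norm inequalities preserves $|y - e| \in [r_{n_1+ N^{-7/100}N}^-, r_{n_1-N^{-7/100}N}^+]$. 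Thus $(e,\al) \in P_{n_1}$, proving closure. Since $H$ has parametrically open slices as noted above, $P_{n_1} \setminus H = P_{n_1} \cap H^c$ is parametrically closed.

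I do not expect any serious obstacle beyond the short compactness argument for closure of $P_{n_1}$; the rest is purely formal bookkeeping, distinguishing where we need the full Euclidean closedness (for $\Delta D$) from where we only need closedness of slices (for $\Delta H$ and $P_{n_1}\setminus H$), the distinction being dictated by the neighborhoods $D$ vs.\ $H$ in the definition \eqref{DeltaDef}.
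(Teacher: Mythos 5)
Your proof is correct and follows essentially the same route as the paper's: decompose $\Delta$ into the three pieces, observe $\Delta D$ is closed (hence parametrically closed) because $D$ is Euclidean-open while $D_{n_2+N^{-10/100}N}$ is closed, observe $\Delta H$ and $P_{n_1}\setminus H$ are parametrically closed because $H$ is parametrically open while $H_{n_0+N^{-3/100}N}$ and $P_{n_1}$ are closed. The one addition you make is to spell out the compactness/diagonal-extraction argument for closure of $P_{n_1}$, which the paper merely asserts ``can be shown (following arguments similar to those used for each $H_n$)''; your sketch is exactly the intended argument and fills that gap nicely.
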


\begin{proof}
Since $H$ is parametrically open by definition and $H_{n_0 + N^{-3/100} N}$ is closed (by Lemma~\ref{HnClosed}), and therefore parametrically closed, then $\Delta H := H_{n_0 + N^{-3/100} N} \setminus H$ is parametrically closed.
Similarly, since $D$ is open by definition and $D_{n_2 + N^{-10/100} N}$ is closed (by Lemma~\ref{DnClosed}), then $\Delta D := D_{n_2 + N^{-10/100} N} \setminus D$ is closed, and consequently parametrically closed.
It can be shown (following arguments similar to those used for each $H_n$) that $P_{n_1}$ is closed.
Since $H$ is parametrically open, then $P_{n_1} \setminus H$ is also parametrically closed.
It follows that $\Delta$, the union of three parametrically closed sets, is itself parametrically closed.

\end{proof}

\end{appendix}


\def\cprime{$'$}

\end{document}